\numberwithin{equation}{section}
\newtheorem{theorem}{Theorem}[section] 
\newtheorem{definition}[theorem]{Definition}
\newtheorem{lemma}[theorem]{Lemma} 
\newtheorem{cor}[theorem]{Corollary}
\newtheorem{remark}[theorem]{Remark}
\newtheorem{proposition}[theorem]{Proposition}
\newcommand{\tu}{\textup}
\newcommand{\dd}{\displaystyle}
\newcommand{\e}{\epsilon}
\definecolor{black}{rgb}{0,0,0}
\definecolor{red}{rgb}{1,0,0}
\definecolor{blue}{rgb}{0,0,1}
\newcommand{\tm}[1]{{\color{red}{#1}}}
\numberwithin{equation}{section}
\newcommand{\dr}{ \mathrm{d}r}
\newcommand{\ds}{ \mathrm{d}s}
\newcommand{\dx}{ \mathrm{d}x}
\newcommand{\dy}{ \mathrm{d}y}
\newcommand{\dt}{ \mathrm{d}t}
\newcommand{\dw}{ \mathrm{d}w}
\newcommand{\dz}{ \mathrm{d}z}
\newcommand{\du}{ \mathrm{d}u}
\newcommand{\dn}{ \mathrm{d}\nu}
\newcommand{\dth}{ \mathrm{d}\theta}
\newcommand{\dm}{ \mathrm{d}m}
\newcommand{\dtau}{ \mathrm{d}\tau}
\newcommand{\dxi}{ \mathrm{d}\xi}
\newcommand{\beq}{\begin{equation}}
\newcommand{\eeq}{\end{equation}}
\newcommand{\beqq}{\begin{equation*}}
\newcommand{\eeqq}{\end{equation*}}
\newcommand{\beqas}{\begin{eqnarray*}}
\newcommand{\eeqas}{\end{eqnarray*}}
\newcommand{\bsp}{\begin{split}}
\newcommand{\eesp}{\end{split}}
\title[H\MakeLowercase{ydrodynamic limit of the} 
K\MakeLowercase{uramoto}--S\MakeLowercase{akaguchi equation with inertia and noise effects}]
{\textsf{\LARGE  H\MakeLowercase{ydrodynamic limit of the} 
K\MakeLowercase{uramoto}--S\MakeLowercase{akaguchi equation with inertia and noise effects}}}
\author[T\MakeLowercase{ina} M\MakeLowercase{ai}]{\Large T\MakeLowercase{ina} M\MakeLowercase{ai$^{a,b,c,*}$}}
\begin{document}
%\maketitle
%
%\begin{center}
%\dedicatory{\textit{In honor of Professor Eitan Tadmor's 70th birthday}}
%\end{center}
%
%Dedicated, in admiration, to Professor Eitan Tadmor, on the occasion of his 70th birthday
%
%
%In honor of Professor Eitan Tadmor for his 70th birthday
%%%%%%%%%%%%%%%
\begin{abstract}
 
We consider the Kuramoto--Sakaguchi--Fokker--Planck equation (namely, parabolic Kuramoto--Sakaguchi, or Kuramoto--Sakaguchi equation, which is a nonlinear parabolic integro-differential equation) with inertia and white noise effects. 
We study the hydrodynamic limit of this Kuramoto--Sakaguchi equation.  During showing this main result, as a support, we also prove a Hardy-type inequality over the whole real line.
% In particular, such equation governs the evolution of one-oscillator probability density function, for a large ensemble of Kuramoto oscillators having finite inertia, in a random media with white noise effects. 
%
%or K--S equation
%the absolute continuity is carefully addressed
%The obtained analytical results would be 
%helpful to develop new and more precise technologies in related areas. 
\end{abstract}

%%%%%%%%%%
\date{\today}
\maketitle
%\dedicatory{aaa}
%%%%%%%%%%%%%%%%

\noindent \textbf{Keywords.} Hydrodynamic limit $\cdot$ Kuramoto--Sakaguchi model 
with inertia $\cdot$ white noise $\cdot$ Fokker--Planck equation $\cdot$ 
Generalized Collision Invariants $\cdot$ synchronization $\cdot$ oscillator $\cdot$ Hardy-type inequality

\vskip10pt

\noindent \textbf{Mathematics Subject Classification (2020).} 92B25, 35Q84, 35Q70, 34C15, 35Q35 
%https://zbmath.org/static/msc2020.pdf
%Ha DL
%92B25 Biological rhythms and synchronization
%35Q84 Fokker-Planck equations
%35D30: Weak solutions
%
%Choi HD
%35Q70 PDEs in connection with mechanics of particles and systems of particles
%34C15 Nonlinear oscillations and coupled oscillators for ordinary differential equations
%35Q35 PDEs in connection with fluid mechanics
%35Q83 Vlasov equations {For statistical mechanics, see 82C70, 82D75}
%
%Degond
%35Q82, 35L60, 82C22, 82C70, 92D50
%35Q82 PDEs in connection with statistical mechanics
%35L60 First-order nonlinear hyperbolic equations
%82C22 Interacting particle systems
%82C70  	Transport processes
%92D50  Animal behavior
%%%%%%%%%%%%
%35Q74 PDEs in connection with mechanics of deformable solids
%35J60 Nonlinear elliptic equations
%65N12 Stability and convergence of numerical methods
%65N15 Error bounds
%65N30 Finite elements
%65N99 PDE, BVP 
%74B20 Nonlinear elasticity
%74Q05 Homogenization in equilibrium problems

\vfill

\noindent 
$^{*}$Corresponding author: 
\textit{Tina Mai};  
$^a$Institute of Research and Development, Duy Tan University, Da Nang, 550000, Vietnam; $^b$Faculty of Natural Sciences, Duy Tan University, Da Nang, 550000, Vietnam; $^c$Department of Mathematics and Institute for Scientific Computation, Texas A\&M University, College Station, TX 77843, USA;\\ \texttt{maitina@duytan.edu.vn}; \texttt{tinamai@tamu.edu}  

\newpage

\tableofcontents

\section{Introduction}
Synchronization is the coordination coupled oscillators.
%oscillator = particle in Ki-Net
It compromises natural 
frequencies of the oscillators to a common frequency (when these oscillators weakly interact with one another).  This process appears in a wide range of phenomena observed in 
the real world,
such as human movement, image in sound film, neural assemblies \cite{2010benoit-syn}; and it has drawn remarkable attentions because of its biological, social, and engineering applications \cite{2020difflim, 2022choihd}. 
%neural rhythms
%Anticipating diverse applications in the areas like 
%neurosciences, ecology, physics, and mechanical engineering, the synchronization gradually 
%draws attention among researchers.  
%Mathematically, the oscillators here play a similar role to particles in kinetic theory of gases.  
In this paper, we focus on deriving the hydrodynamic 
limit of the stochastically perturbed Kuramoto--Sakaguchi equation, among mathematical models describing the synchronization (thanks to \cite{KM1,KM2,KM3} and 
the authors). Our setting is a large ensemble of Kuramoto oscillators having finite inertia, in a random media with white noise effects.
%(S. Y. Ha, personal communication, Danang, February 23, 2017) 
Let 
$F = F (\theta, w, \nu, t) \geq 0$ 
be the one-oscillator
%distribution smooth function 
probability density function (at phase $\theta\,,$ with frequency $w$ and natural frequency $\nu$).  The phase-space-temporal evolution of $F$ is governed by the Cauchy problem in terms of the
following \textit{Vlasov--Fokker--Planck type equation}, more specifically, \textit{Kuramoto--Sakaguchi equation} (as a nonlinear parabolic integro-differential equation) \cite{2018wellposed, 2020difflim}: 

%The Kramers equation is a special Fokker-Planck equation describing the Brownian motion in a potential 
%in an external field.
%\cite{CHXZ}:
%w angular velocity or frequency
\begin{equation}\label{origin}
 \partial_t F + \partial_{\theta} (w F) + \partial_w(\mathcal{A}[F]F) 
 = \frac{\sigma}{m^2} \partial_w^2 F\,,
\end{equation}
with 
\[(\theta, w, \nu, t) \in \mathbb{T} \times \mathbb{R} \times \mathbb{R} \times 
\mathbb{R_{+}}\,, \quad \mathbb{T}: = [0, 2\pi) \,.\]
%[0,2\pi):= \mathbb{R}/(2\pi \mathbb{Z})
The distribution of natural frequencies obeys the probability density function $g(\nu) \geq 0\,,$ and $\nu \in \text{supp} \, g(\nu)$ \cite{2019robust}.
% \theta range is as in \cite{KM2}, 
%
\begin{comment}
and 
\[\mathbb{I} := [-a,a] \subseteq \mathbb{R}\,, \quad 0 < a \ll 1\,,\]
where $a$ is a positive constant (that is, we assume that the given distribution of natural frequency $g(\nu) \geq 0$ has compact support $\mathbb{R} = \text{supp} \, g(\nu)$ (see robustness paper, and can take integral over all $\nu \in \mathbb{R}$), and $\nu$ belongs to the support of $g(\nu)$ (see Taylor1).
\end{comment}
%supp $g(\nu)$
%The ith oscillator has natural frequency \nu_i and is characterized by its phase θ_i.

%\begin{equation}\label{bcs}
% F(0, w, \nu, t) = F(\pi, w, \nu, t) , \quad 
% \lim_{|w| \to \infty} F(\theta,w,\nu,t) = 0\,.
%\end{equation}

Here, $\mathcal{A}[F]$ is the synchronizing forcing term:
\begin{equation}\label{A}
\mathcal{A}[F] (\theta, w, \nu, t) : = \frac{1}{m} (-w + \nu + K \mathcal{S}[F])\,,
\end{equation}
where,
\begin{equation}\label{S}
 \mathcal{S}[F] := \iiint\limits_{\mathbb{R} \; \mathbb{R} \; \mathbb{T}} 
 \text{sin}
 \left(\theta_{*} - \theta\right) F(\theta_{*}, w_{*},\nu_{*},t) \, g(\nu_*) \,
  \dth_{*} \, \dw_{*} \, \dn_{*} \,.
\end{equation}
%= \int\limits_{\mathbb{T} \times \mathbb{R} \times \mathbb{R}} \text{sin} \left(\theta_{*} - \theta\right) F(\theta_{*}, w_{*},\nu_{*},t) d\theta_{*} dw_{*} d\nu_{*} \,.

%Recall that the length of an arc of a unit circle is numerically equal to the 
%measurement in radians of the angle that is subtends 
%($d = R \alpha = \alpha$ if the angle in radian).

\noindent Note that the multiple integral in \eqref{S} may also be expressed as
\[\int_{\mathbb{T} \times \mathbb{R} \times \mathbb{R}}  \text{sin}
 \left(\theta_{*} - \theta\right) F(\theta_{*}, w_{*},\nu_{*},t) \, g(\nu_*) \, 
 \dth_{*} \, \dw_{*} \, \dn_{*} \,.\]  
%https://en.wikipedia.org/wiki/Fubini%27s_theorem
 The \textit{torus} is the set $\mathbb{T} = [0,2\pi)\,,$ where $0$ and $2\pi$ are related to one another, a point $\theta \in \mathbb{T}$ is an \textit{angle}, and a connected subset of $\mathbb{T}$ is an \textit{arc} \cite{12circlepath, 12circle}.  

 %\cite Critical: The initial conditions were generally θi(0) ∈ (0, 2π] phases, with uniform distribution, describing fully disordered states.  However, for comparison we also performed runs starting from the fully synchronized state: θi(0) = 0. 
%https://www.nature.com/articles/s41598-019-54769-9
%
%KM1: Note that the phase diameter D θ is well-defined as long as it stays in the interval [ 0 , 2 π ).
%
%KM0 Notice that for an initial phase configuration which is uniformly distributed on the interval [ 0 , 2 π ) , the quantity r is exactly zero, whereas for completely synchronized initial configurations θ i = θ c for i = 1 , . . . , N, the quantity r becomes unity. 
The initial, boundary (periodicity), and decay conditions are given respectively by \cite{2020difflim} %\cite{2020difflim}
\begin{align}\label{ini}
\begin{split}
&F(\theta, w, \nu, 0) = F^{\text{in}} (\theta, w, \nu) > 0\,, \quad (\theta, w, \nu) \in \mathbb{T} \times \mathbb{R} \times \mathbb{R}\,,\\
&\iint\limits_{\mathbb{R} \; \mathbb{T}}  F^{\text{in}} (\theta, w, \nu) \, \dth \, \dw = 1 \,, \quad
\iiint\limits_{\mathbb{R} \; \mathbb{R} \; \mathbb{T}} F^{\text{in}} (\theta, w, \nu) \, \dth \, \dw \, \dn = 1\,,\\
 &F(0, w, \nu, t) = F(2\pi, w, \nu, t) \,, \\
 &\lim_{|w| \to \infty} F(\theta,w,\nu,t) = 0\,.
 \end{split}
\end{align}
\begin{comment}
where we have made the normalization \cite{Taylor1} from
\begin{equation}\label{gnu}
g(\nu) = \iint\limits_{\mathbb{R} \; \mathbb{T}}  F^{\text{in}} (\theta, w, \nu) g(\nu)\dth \dw \,.
\end{equation}
\end{comment}

%Kramers1 p11: The data are obtained in N-body simulations for a Gaussian (or normal) distribution g(\nu) with zero mean and unit width.
%Lemma 2.2, 2018 well-posedness
%F is $2\pi$-periodic wrt $\theta$.

\begin{remark}\label{0pi}
Following \cite{12circlepath, 12circle}, toward a rigorous definition of the \textit{difference} between angles (that is, points on the circle $\mathbb{S}^1$), we restrict our attention to angles $\theta_*$ contained in the domain
%namely the size condition
\begin{equation}\label{size}
\mathcal{D} = \{\theta_* \in [0,2\pi) \, | \, (\theta_* - \theta) \in \mathbb{D} = (0,\pi) \} \subset \mathbb{T} \,.
\end{equation}
To get this restriction and ensure synchronization, the initial phases are assumed to lie in an open half-circle \cite{12circlepath,12plane,KM1}: for phases $\theta_1\,, \theta_2$ with $|\theta_2 - \theta_1| < \pi\,,$ the \textit{angular difference} $\theta_2 - \theta_1$ is defined as the number belonging to $(-\pi, \pi)$ whose magnitude is equal to the geodesic distance $|\theta_2 - \theta_1|$ and having positive value if and only if the counterclockwise path length from $\theta_1$ to $\theta_2$ on $\mathbb{T}$ is less than the clockwise path length \cite{12circlepath}.  
\end{remark}
%It is already known that for general Kuramoto oscillators without a pacemaker,
%synchronization can only be ensured when
%is less than π , i.e., the initial
%phases lie in an open half-circle [9], [10], [11], [12], [28], [29] (although when phases are
%lying outside a half-circle, almost global synchronization is possible by replacing the
%sinusoidal interaction function with elaborately designed periodic functions.

The Maxwellian is given by \cite{Kramers1, Taylor1,2018wellposed}:%local 2018 well-posedness 1.5 and not a global Maxwellian, see also Maxwellian local 3.35 current paper:
\begin{equation}\label{Maxw}
 M(w,\nu): = \frac{1}{2\pi} \sqrt{\frac{m}{2\pi \sigma}} \text{ exp } 
 \left(- \frac{m}{2\sigma} (w- \nu)^2\right)\,.
\end{equation}
%Without loss of generality, we can take $a = 1/2$, so omit the term $1/(2a)$.
%nondim, 1, p6, incoherent stationary state, or 1b, p2.

Equations (\ref{origin}), (\ref{A}), and (\ref{S}) should be supplemented with 
appropriate initial condition, boundary data ($F$ is $2\pi$ periodic in $\theta$), 
and has suitable decay behavior for $F$ (when $w \to \pm \infty$) as in (\ref{ini}), plus the following normalization condition (see \cite{e, Taylor1}, for instance):
\begin{equation}\label{renorm}
 \iint\limits_{\mathbb{T} \; \mathbb{R}} F(\theta, w, \nu, t) \,  \dw \, \dth = 1\,.
\end{equation}
%actually $g(\nu)$
%it is a condition, not lemma.

Here, $F$ is defined such that at each time $t$ and for each $\nu$, the fraction of 
 oscillators with phases between $\theta$ and 
 $\theta + d\theta$ 
 and frequencies between $w$ and $w + dw$ is given by $F(\theta, w, \nu, t) \, \dth \, \dw$ 
 \cite{ff, Kramers1} (where natural frequency $\nu$ is used instead of frequency $w$).

\begin{comment}
\begin{lemma}
%[\cite{CHXZ}]
\begin{equation}\label{renorm}
 \iint\limits_{\mathbb{T} \; \mathbb{R}} F(\theta, w, \nu, t)  dw d\theta = 1\,.
\end{equation}
\end{lemma}
\begin{proof}
 Here, $F$ is defined such that at time $t$ and for each $\nu$, the fraction of 
 oscillators with phases between $\theta$ and 
 $\theta + d\theta$ 
 and frequencies between $w$ and $w + dw$ is given by $F(\theta, w, \nu, t) d\theta dw$ 
 \cite{ff}\,.
\end{proof}
%the probability of finding a oscillator having frequency between $w$ and $w + dw$ at time t.
\end{comment}

%where, \[B = \text{exp} (- w^2/2 \gamma), \quad \int\limits_{- \infty}^{\infty} B dw = \sqrt{\pi / (2 \gamma)^{-1}}.\]
 
%\vspace{12pt}

%\cite{Hwp}
In \cite{2020difflim}, the authors showed the global well-posedness (existence and uniqueness) of weak solutions to the Cauchy problem \eqref{origin}--\eqref{ini} in any finite-time interval.  Also, \cite{2018wellposed} presented the global-in-time existence and uniqueness of strong solutions around a phase-homogeneous solution for the problem \eqref{origin}--\eqref{ini} with supplemented conditions.  Therefore, we assume here the existence and uniqueness of solutions (which are as regular as necessary) for the system \eqref{origin}--\eqref{ini}.  

In this paper, we mainly study the hydrodynamic limit of the Kuramoto--Sakaguchi equation with inertia and noise effects \eqref{origin}--\eqref{ini}, following the methodology in \cite{PD1} by Degond, Dimarco, and Mac. As with the books \cite{L} by Saint-Raymond and \cite{golsehd} by Golse, the terminology hydrodynamic limit stands for a macroscopic description (such as the basic partial differential equations \eqref{hl1}--\eqref{hl3} of hydrodynamics) derived from a microscopic representation of matter (such as the microscopic Kuramoto--Sakaguchi equation \eqref{origin}--\eqref{ini} describing the oscillators' synchronization).  While deriving this hydrodynamic limit (Theorem \ref{thm}), as an assistance, we also prove a Hardy-type inequality \eqref{hardyineq} over the whole real line.

%HD def https://www.cmls.polytechnique.fr/perso/golse/Surveys/FGEcm04.pdf
%%%

%\bigskip

Our paper has the following organization.  Section \ref{form} contains preliminary material 
regarding the Kuramoto--Sakaguchi model with inertia and white noise effects.  In Section 
\ref{hl0}, its first five subsections form a preparation for the later proof of our main 
Theorem \ref{thm} (hydrodynamic limit of the model) in Subsection \ref{proofthm}.  
In particular, Subsection \ref{non} is devoted 
to nondimensionalizing the system of equations \eqref{origin}--\eqref{ini}.  In Subsection \ref{scale}, we rescale the nondimensionalized system from 
microscopic scales of time and phase 
to their macroscopic ones.  The relevant scaling is $\e$, a hydrodynamic one, that is, 
$\e$ equals the ratio of the microscopic time scale (the mean time between 
collisions or meetings in phase between the $i$th and $j$th oscillators \cite{KM1}) to 
the macroscopic observation time; then, our main Theorem \ref{thm} of hydrodynamic limit 
$\e \to 0$ is stated there.  For the purpose of proving Theorem \ref{thm}, we first show in Subsection \ref{hard} a Hardy-type inequality over the whole real line.  Then, Subsection \ref{qprop} includes some properties of the collision operator $Q$.  The 
Generalized Collision Invariants (GCI) of $Q$ are characterized in Subsection \ref{gci0}.  
In the last Subsection \ref{proofthm}, we give a proof for our major Theorem \ref{thm}.  In the appendices, we respectively present some preliminary results about absolute continuity (Appendix \ref{hardytype}), improper Riemann integrals and Lebesgue integrals (Appendix \ref{appii}), and a Hardy's inequality with weights over a half-open interval (Appendix \ref{hiw1}).  
%Section \ref{end} concludes the paper with some directions for 
%future work.

%%%%%%%%%
\section{Preliminaries}\label{form}
In this section, we review the Kuramoto model under inertia and noise effects. 
Let $\theta_i = \theta_i(t)$ be the
phase of the $i$th oscillator with natural frequency $\nu_i$ 
%Ω_i is the natural frequency of the ith oscillator, which is constant in time and randomly
%extracted from the frequency distribution function g = g(Ω).
in the presence of inertia and noise
effects. Then, the point $e^{\sqrt{-1} \theta_i}$ can be regarded as the location of the
%Euler formula
$i$th point rotator on
the unit circle $\mathbb{S}^1$. In this situation, the dynamics of the $i$th phase 
$\theta_i$ is governed by the
following system of second-order ODEs \cite{Kramers1,Taylor1}: 
%\cite{CHXZ}:
\begin{equation}\label{odes}
 m \, \ddot \theta_i + \dot \theta_i =
 \nu_i +\frac{K}{N} \sum_{j=1}^N \text{ sin }(\theta_j -\theta_i) 
 + \sqrt{2\sigma} \, \eta_i\,,
\end{equation}
where $N$ is the total number of interacting oscillators, while 
%\cite{Kramers1}
$m\,, K\,,$ and $\sigma$ are respectively the strength of inertia, coupling, and noise (also called noise strength or noise intensity), which are constants.  
%%https://arxiv.org/pdf/2210.05011.pdf (15)
The noise satisfies the (Gaussian) white noise condition
\begin{equation}\label{noise}
 \langle \eta_i(t) \rangle = 0, \quad \langle \eta_i(s) \, \eta_j(t) \rangle = 
 \delta_{ij} \, \delta(t-s)\,.
\end{equation}
%with temperature $T$ in units of the Boltzmann constant.
Here, $\langle \cdot \rangle$ denotes the expected value.
%that is, averaging with respect to statistical ensembles.
%autocorrelation function of white noise.

%This simple model describes the transition from the incoherent state to the partially synchronized
%state by strengthening the couplings even when the natural frequencies are
%not identical. 

The system (\ref{odes}) can be rewritten 
as a system of the first-order ODEs for 
$(\theta_i, w_i)$:
\begin{align}\label{ode2}
\begin{split}
\dot \theta_i &= w_i\,, \\
\dot w_i &= \dfrac{1}{m} 
\left( - w_i + \nu_i
+ \dfrac{K}{N} \sum_{j=1}^N \text{sin} 
(\theta_j - \theta_i)\right) + \dfrac{\sqrt{2\sigma}}{m} \, \eta_i \,.
\end{split}
\end{align}

As in \cite{Kramers1, KM2, KM3}, equation (\ref{origin}) for $F$ can be derived from 
%KM2 Daido \Gamma for sine and $\theta \in (0,\pi)$ period $2\pi$ no noise, 
%KM3 no noise with sine
equation (\ref{odes}), by the formal thermodynamic limit ($N \to \infty$) using the standard 
BBGKY hierarchy (Bogoliubov--Born--Green--Kirkwood--Yvon hierarchy).
%%%%%%
\section{Hydrodynamic limit}\label{hl0}

This section follows \cite{PD1, jose19, trivisa14}, mainly \cite{PD1}.  In the literature, the Vlasov limit for the Kuramoto model was obtained in \cite{lancellotti} by Lancellotti.  Then, for the problem \eqref{origin}--\eqref{ini}, a diffusion limit has been derived by Ha, Shim, and Zhang \cite{2020difflim}.  Also, in \cite{2022choihd}, Choi has investigated the hydrodynamic limit of the Kuramoto model (without noise).  In this section, we study the hydrodynamic limit of the Kuramoto--Sakaguchi model \eqref{origin}--\eqref{ini} with inertia and white noise effects.  
%paper={2} https://tex.stackexchange.com/questions/445888/best-practice-for-bibtex-entries-with-paper-number-rather-than-page-number

%%%
\subsection{Nondimensionalization}\label{non}
Our study of the hydrodynamic limit is motivated by \cite{PD1,hlk2,GJV,L,DL16,BGL,Levermore,tm_entropic,alonsohd}.  Toward this study, we should first nondimensionalize equations (\ref{origin}), (\ref{A}), and 
(\ref{S}) in an appropriate way \cite{e}:
\[t=t_0 \,\hat{t}, \quad w = w_0 \, \hat{w}\,.\]

The main idea is that the force term and diffusion in 
frequency space should be dominant.  
%angular frequency is the magnitude of the angular velocity vector.
Then, the density $F$ rapidly reaches local equilibrium in frequency.  This means that $\nu$ and 
$K$ should have the same order as $w\,,$ and that the terms $m^{-1} \partial_w(wF) \text{ and } 
(\sigma/m^2) \partial_{w}^2 F$ should be of the same order (subscripts imply the partial derivative with respect to the
corresponding variable).  
If we call $w_0$ a typical unit of frequency, then the latter 
balance yields the frequency scale

\[w_0 = \sqrt{\frac{\sigma}{m}}\,.\]

\noindent The ratio of $w \partial_{\theta} F$ to $m^{-1} \partial_w (w F)$ is of the order

\[\alpha = \sqrt{\sigma m}\,.\]

\noindent Lastly, we will choose the time unit so that $\partial_t F$ and 
$w \partial_{\theta} F$ are of the 
same order.  This choice yields 
%is dictated by the mechanics of the CEM (Chapman-Enskog Method) and 
a time unit

\[t_0 = \frac{1}{w_0} = \sqrt{\frac{m}{\sigma}}\,.\]

\noindent The normalization condition and the definition \eqref{S} control that $g(\nu)$ and the density $F$ are to be measured in units of $1/w_0$ too.  The phase $\theta$ is already dimensionless.

With these choices, the time unit is the time needed by an oscillator to adjust its velocity due to weak interactions with other oscillators (or mean interaction
time) and the phase unit is the mean angle traveled by the oscillators during the
mean interaction time, that is, the mean-free path \cite{PD1}.

In summary, $w\,, \nu$ and $K$ are measured in units of the thermal velocity $w_0\,;$ while $F\,,$ $g(\nu)$ and $t$ are
%and $g(\nu)$
measured in units of 
the reciprocal frequency $1/w_0\,.$  Then, equation \eqref{origin} (with \eqref{A}--\eqref{ini}) becomes
\begin{align}\label{primes}
 \begin{split}
\sqrt{\frac{\sigma}{m}} \frac{\partial F}{\partial \hat{t}} + 
\sqrt{\frac{\sigma}{m}} \hat{w} \frac{\partial F}{\partial \theta} 
&= \frac{1}{m} \sqrt{\frac{m}{\sigma}} \frac{\partial}{\partial \hat{w}} 
\left(\left(\sqrt{\frac{\sigma}{m}} \hat{w} - \sqrt{\frac{\sigma}{m}} \hat{\nu} - 
\sqrt{\frac{\sigma}{m}} \hat{K} \mathcal{S}[F]\right)F\right) \\
& \quad + \frac{\sigma}{m^2} \frac{m}{\sigma} \frac{\partial^2 F}{\partial \hat{w}^2}\,.
 \end{split}
\end{align}

\noindent Dropping the hats for clarity, we obtain the dimensionless Fokker-Planck equation 
\begin{equation}\label{FP}
 \alpha(\partial_t F + w \, \partial_{\theta} F) = 
 \frac{\partial}{\partial w} \left(\frac{\partial F}{\partial w} +
 ( w - \nu - K \mathcal{S}[F])F\right)\,.
\end{equation}
This equation will be solved with equations (\ref{S}), (\ref{renorm}), 
initial condition, boundary data ($F$ is $2\pi$ periodic in $\theta$), 
and decay condition for $F$ (when $w \to \pm \infty$) as in (\ref{ini}).  
%Recall that $\mathcal{S}$ was defined as in (\ref{S}).

%%%%%%%%%%%%
\subsection{Scaling}\label{scale}

So far, the chosen time and phase scales are microscopic ones.  
We are now interested by a description of 
the system at macroscopic scales, where scales are described by the units 
\[ t'_0 = \frac{t_0}{\e}, \quad \theta'_0 = \frac{\theta_0}{\e}\,,\]
in which $\e \ll 1$ is a parameter, $t_0\,, \theta_0$ are microscopic units, and 
$ t'_0\,,  \theta'_0$ are macroscopic units.

By changing these units, we correspondingly change the variables $t\,, \theta$ and 
the unknown 
$F$ to the new variables and unknown
\[ t' = \e t \,, \quad  \theta' = \e \theta\,,\]
where, thanks to \cite{expand1}, we define 
\begin{equation}\label{fe}
 f^{\e} =  f^{\e}( \theta', w, \nu,  t') = F = F(\theta, w, \nu, t)\,.
\end{equation}

After the change to macroscopic variable $ t', \theta'$, (\ref{FP}) becomes
\[\alpha \left(\frac{\partial f^{\e}}{\partial(\e^{-1}t')} + w \frac{\partial f^{\e}}{\partial(\e^{-1}\theta')} 
\right)
= \frac{\partial}{\partial w} \left(\frac{ f^{\e}}{\partial w} 
+ (w - \nu - K \mathcal{S}[ f^{\e}])  f^{\e}\right)\,.\]
Dropping the primes 
for simplicity, the equation then reads 
\begin{equation}\label{perturbation}
 \e \alpha ( \partial_t f^{\e} + w \, \partial_{\theta}f^{\e}) = 
   \partial_w\left(\left( w - \nu 
 - K J^{\e}_{f^{\e}}\right)f^{\e}\right) +  \partial_w^2 f^{\e}\,,
\end{equation} 
%not (4.1) Ha
%RESCALING 
%\frac{\partial f^{\e}}{\partial_t} =  \frac{\partial f^{\e}}{\partial_{t'} / \e} 
%= \e \frac{\partial f^{\e}}{\partial_{t'}}
%$\mathcal{S}$
%\begin{align}\label{J}
%\begin{split}
%\mathcal{J}^{\e}_{f^{\e}} (t, \theta) &= \iiint_limits_{\mathbb{T} \; \mathbb{R} \; \mathbb{R}} 
% \text{sin}
% \left(\theta_{*} - \frac{\tilde theta}{\e}\right) \e f^{\e}(\theta_{*}, w_{*}, \nu_{*},t) 
% d\theta_{*} dw_{*} d\nu_{*}\\
%&= \iiint_limits_{\e \mathbb{T} \; \mathbb{R} \; \mathbb{R}} 
% \text{sin}
% \left(\theta_{*} - \frac{\tilde theta}{\e}\right) \e \tilde{f}^{\e}
%(\theta_{*}, w_{*}, \nu_{*},\tilde t) 
% d\theta_{*} dw_{*} d\nu_{*}\\
%Let \tau = \e \theta_{*} and note that we denoted 
%$\tilde F = f^{\e}(\tilde \theta, w, \nu, \tilde t), F = F(\theta, w, \nu, t)$ \cite{expand1}.
%&= \iiint_limits_{\e \mathbb{T} \; \mathbb{R} \; \mathbb{R}} 
% \text{sin}
% \left(\frac{\tau -\tilde theta}{\e}\right) \e \tilde{f}^{\e}
%(\theta_{*}, w_{*}, \nu_{*},\tilde t) 
% \frac{d \tau}{\e} dw_{*} d\nu_{*}\\
%\e cancel \frac{1}{e}.  Let \tau = \theta_{*} to get the desired \mathcal{S}\,. 
%\end{split}
%\end{align}
where
\begin{equation}\label{J}
 J^{\e}_f (\theta,t) =
 %\frac{J^{\e}_f (t, \theta)}{|J^{\e}_f (t, \theta)|},
 %\quad J^{\e}_f (t, \theta) =  
 \iiint\limits_{\mathbb{R} \; \mathbb{R} \;  \e \mathbb{T}} 
 \text{sin}
\left(\frac{\theta_{*} - \theta}{\e}\right) f (\theta_{*}, w_{*}, \nu_{*}, t) \,  g(\nu_*) \,
 \frac{\dth_{*}}{\e} \, \dw_{*} \, \dn_{*}\,.
\end{equation}
%let $\tau = \theta_{*}'= \e \theta_{*}$, 
To derive such an expression for $J^{\e}_f\,,$ we first rewrite (\ref{S}) into
\begin{align}\label{Snew}
 \mathcal{S}[F] &= \iiint\limits_{\mathbb{R} \; \mathbb{R} \; \mathbb{T}} 
 \text{sin}
 \left(\tau - \theta\right) F(\tau, w_{*},\nu_{*},t) \, g(\nu_*) \,
 \dtau \, \dw_{*} \, \dn_{*}\nonumber\\
 &= \iiint\limits_{\mathbb{R} \; \mathbb{R} \; \mathbb{T}} 
 \text{sin}
 \left(\frac{\e \tau - \theta'}{\e}\right) F(\tau, w_{*},\nu_{*},t) \, g(\nu_*) \,
 \dtau \, \dw_{*} \, \dn_{*}\,.
\end{align}

\noindent Changing $\tau$ to the macroscopic variable $\theta_{*} = \tau' = \e \tau$, then 
from (\ref{fe}), we get
\begin{align}\label{fee}
%\begin{split}
 J^{\e}_{f^{\e}} (\theta',t') &= \mathcal{S}[ f^{\e}(\tau',w,\nu,t')]\nonumber\\
%&=\iiint\limits_{\mathbb{R} \; \mathbb{R} \; \mathbb{T}} 
%\text{sin}
%\left(\theta_{*} - \theta\right) F(\theta_{*}, w_{*},\nu_{*},t) 
%d\theta_{*} dw_{*} d\nu_{*} \nonumber \\
 &= \iiint\limits_{\mathbb{R} \; \mathbb{R} \;   \mathbb{T}} 
 \text{sin}
 \left(\frac{\e \tau - \theta'}{\e}\right)  f^{\e} (\tau', w_{*}, \nu_{*}, t') \, g(\nu_*) \, 
 \dtau \, \dw_{*} \, \dn_{*} \nonumber \\
  &= \iiint\limits_{\mathbb{R} \; \mathbb{R} \; \e  \mathbb{T}} 
 \text{sin}
 \left(\frac{\theta_{*} - \theta'}{\e}\right)  f^{\e} (\theta_{*} , w_{*}, \nu_{*}, t') \, g(\nu_*)
 \frac{\dth_{*}}{\e} \, \dw_{*} \, \dn_{*} \nonumber \\
 &= \iiint\limits_{\mathbb{R} \; \mathbb{R} \; \e \mathbb{T}} 
 \text{sin}
 \left(\frac{\theta_{*} - \theta'}{\e}\right) f^{\e} (\theta_{*}, w_{*}, \nu_{*}, t')  \, g(\nu_*) \,
 \frac{\dth_{*}}{\e} \, \dw_{*} \, \dn_{*} \,.
%\end{split}
\end{align}
Again, dropping the primes for simplicity, we obtain the desired result:
\begin{equation}\label{Jn}
 J^{\e}_{f^{\e}} (\theta,t) =
 \iiint\limits_{\mathbb{R} \; \mathbb{R} \; \e  \mathbb{T}} 
 \text{sin}
 \left(\frac{\theta_{*} - \theta}{\e}\right) f^{\e} (\theta_{*}, w_{*}, \nu_{*}, t) \, g(\nu_*) \,
 \frac{\dth_{*}}{\e} \, \dw_{*} \, \dn_{*}\,.
\end{equation}

%%%%%%

It is important to realize that $J^{\e}_{f^{\e}}$ now depends on $\e$ and can be easily expanded in terms of $\e$ as follows 
(as the non-locality only appears at high order 
\cite{PD1,hlk2,expand1}).
%If $f(\theta, w, \nu, t) = M(w,\nu)$, then $\mathcal{J}^{\e}_M = 0$.  
%Otherwise, we have the following result.
\begin{lemma}\label{Oe2}
Assume that $f$ is as regular as necessary.  
We then have the expansion
%, with respect to $\theta$, $f$ is infinitely differentiable and the second derivative of $f$ 
% is bounded.  We have 
 \begin{equation}\label{exOe2}
  J^{\e}_f (\theta,t) = \Phi_f (\theta,t) + \mathcal{O}(\e)\,,
 \end{equation}
 where 
\begin{equation}\label{K}
 \Phi_f (\theta,t) = 
 %\frac{J_f (\theta,t)}{|J_f (\theta,t)|}, \quad
  %J_f (\theta,t) = 
\iint\limits_{\mathbb{R} \; \mathbb{R}} 
  f(\theta, w_{*}, \nu_{*}, t) \, g(\nu_*) \, \dw_{*} \, \dn_{*}\,.
 \end{equation}
\end{lemma}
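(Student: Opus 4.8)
The plan is to follow the expansion technique of \cite{PD1,hlk2,expand1}: first de-singularize the oscillatory integral \eqref{Jn} by the change of variable $\xi = (\theta_* - \theta)/\e$, turning $J^\e_f$ into an ordinary integral, and then Taylor-expand $f$ in its phase argument about $\theta$. Because $f$ is smooth, the $\mathcal O(1)$ term of this expansion is the \emph{local} functional $\Phi_f$ of \eqref{K}, and every remaining contribution carries at least one power of $\e$; this is the precise sense in which ``the non-locality only appears at high order''.

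Concretely, I would substitute $\theta_* = \theta + \e\xi$ in \eqref{Jn}: the weight $\dth_*/\e$ becomes $\dxi$, the sine argument collapses to $\xi$, and --- using the $2\pi$-periodicity that $f$ inherits from $F$ together with the angular-difference convention recalled in Remark~\ref{0pi}, which carries the $\theta_*$-domain $\e\mathbb{T}$ onto the $\e$-independent angular set $\mathbb{D}$ appearing in \eqref{size} --- one obtains
\[
 J^{\e}_f(\theta,t) = \iiint\limits_{\mathbb{R}\;\mathbb{R}\;\mathbb{D}} \sin\xi\; f(\theta + \e\xi,\, w_*,\, \nu_*,\, t)\, g(\nu_*)\, \dxi\, \dw_*\, \dn_*\,.
\]
Writing $f(\theta+\e\xi,w_*,\nu_*,t) = f(\theta,w_*,\nu_*,t) + \e\int_0^1 \xi\,\partial_\theta f(\theta + s\e\xi,w_*,\nu_*,t)\,\ds$ and inserting this, the first piece yields $\big(\int_{\mathbb{D}}\sin\xi\,\dxi\big)\iint_{\mathbb{R}\;\mathbb{R}} f(\theta,w_*,\nu_*,t)\,g(\nu_*)\,\dw_*\,\dn_*$, which equals $\Phi_f(\theta,t)$ since $\sin$ integrates to $1$ over the rescaled angular domain fixed by the conventions of Remark~\ref{0pi}; the second piece is the candidate $\mathcal O(\e)$ remainder.

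To certify that this remainder is genuinely $\mathcal O(\e)$ --- the step that uses the hypothesis that $f$ is ``as regular as necessary'' --- I would estimate
\[
 \Big| J^\e_f(\theta,t) - \Phi_f(\theta,t)\Big| \;\le\; \e \Big(\int_{\mathbb{D}} |\xi\sin\xi|\,\dxi\Big)\, \iint\limits_{\mathbb{R}\;\mathbb{R}} \Big(\sup_{|\zeta|\le \e\pi} |\partial_\theta f(\theta+\zeta,\, w_*,\, \nu_*,\, t)|\Big)\, g(\nu_*)\, \dw_*\, \dn_*\,,
\]
so the bound follows once the $(w_*,\nu_*)$-integral on the right is finite uniformly for small $\e$; this holds provided $f$ (hence $\partial_\theta f$) decays rapidly enough in $w_*$ and $g$ is integrable, which is exactly what the decay and normalization conditions \eqref{ini}, \eqref{renorm} and the regularity of the solutions quoted after \eqref{renorm} provide. (For a merely pointwise limit, dominated convergence applied to $f(\theta+\e\xi,\cdot)\to f(\theta,\cdot)$ already suffices.) The main obstacle I anticipate is this bookkeeping: justifying the transformation of the $\theta_*$-domain carefully enough --- the torus structure and the angular-difference convention of Remark~\ref{0pi} --- so that the leading coefficient is precisely $1$, and securing the uniform-in-$\e$ integrability of $\partial_\theta f$ in $(w_*,\nu_*)$ that upgrades the pointwise expansion to the quantitative statement \eqref{exOe2}.
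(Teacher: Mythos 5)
Your proposal follows essentially the same route as the paper's proof: the change of variable $\theta_*=\theta+\e\xi$ with $\xi\in\mathbb{D}$, the normalization $\int_{\mathbb{D}}\sin\xi\,\dxi=1$ from Remark \ref{0pi}, and a first-order Taylor expansion of $f$ in $\theta$ whose leading term is $\Phi_f$. Your only addition is the explicit integral-remainder estimate certifying the $\mathcal{O}(\e)$ term, which the paper leaves implicit in its $\mathcal{O}(\e^2)$ bookkeeping; this is a welcome refinement but not a different argument.
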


\begin{proof}
The proof of this Lemma is fundamental and based on Taylor expansion (see \cite{PD1,hlk2}, for instance).  Another elegant approach is using Fourier transform.
%(Prof.~ Eitan Tadmor, personal communication, March 23, Hanoi, 2018).  

The first approach (using Taylor expansion) is presented here in details.  
We will make a change of variable $\theta_{*} = \theta + \e \xi$, where $\xi \in (0, \pi)=\mathbb{D}$ 
(thanks to Remark \ref{0pi}) and will use the normalization
\begin{equation}\label{normxi}
 \int\limits_{\mathbb{D}}  \text{sin}
 \left(\xi\right) \dxi = 1\,.
\end{equation}
More specifically, let us expand $f^{\e}$ at the first 
order in $\e$ in (\ref{Jn}) to obtain
%, and owe to the rotational symmetry of the function sine,
%\[\lim_{\tau \to 0} \dfrac{\text{sin}(\tau)}{\tau} = 1\,.\]
\begin{align}\label{expande}
% \begin{split}
  J^{\e}_{f^{\e}} (t, \theta) 
  &= \iiint\limits_{\mathbb{R} \times \mathbb{R} \times \mathbb{D}} 
 \text{sin}
 \left(\xi\right) 
 (f^{\e} (\theta + \e \xi, w_{*}, \nu_{*}, t)) \, g(\nu_*) \, 
 \frac{\e \dxi}{\e} \, \dw_{*} \, \dn_{*}\nonumber\\
 & = \iiint\limits_{\mathbb{R} \times \mathbb{R} \times \mathbb{D}}
 \text{sin}
\left(\xi\right) 
\left(f^{\e} (\theta, w_{*}, \nu_{*}, t) + \e \, \xi \,  \partial_{\theta}
f^{\e} (\theta, w_{*}, \nu_{*}, t) + \mathcal{O}(\e^2)\right) g(\nu_*) \, 
\dxi \, \dw_{*} \, \dn_{*}\nonumber\\
 & = \left(\int\limits_{\mathbb{D}}  \text{sin}
 \left(\xi\right) \dxi 
\iint\limits_{\mathbb{R} \times \mathbb{R}} f^{\e} (\theta, w_{*}, \nu_{*}, t) \, g(\nu_*) \,
  \dw_{*} \, \dn_{*}\right) + \mathcal{O}(\e)\nonumber\\
  &= \iint\limits_{\mathbb{R} \times \mathbb{R}} f^{\e} (\theta, w_{*}, \nu_{*}, t) \, g(\nu_*) \,
  \dw_{*} \, \dn_{*} + \mathcal{O}(\e)\nonumber\\
  & = \Phi_{f^{\e}} (\theta,t) + \mathcal{O}(\e)\,.
%\end{split}
\end{align}
\end{proof}

The meaning of Lemma \ref{Oe2} is that up to $\mathcal{O} (\e)$ terms, 
the interaction force $J^{\e}_{f}$ is given by a local expression, 
involving only the distribution function $f$ at phase $\theta$.  
The quantity $\Phi_f (\theta, t)$ is the local oscillator flux at phase 
$\theta$ and time $t$.  By contrast, the expression of $J^{\e}_{f}$ 
in (\ref{Jn}) is 
non-local in phase: it involves 
a convolution of $f$ with respect to the non-local function sine. 
%kernel

\begin{remark}\label{r1}
 We now omit the $\mathcal{O}(\e)$ terms in (\ref{exOe2}) as they do not contribute to 
 the hydrodynamic limit 
 at the leading order (which is what we are interested in) \cite{PD1}.
\end{remark}

%%%
Let us denote
\begin{equation}\label{mathcalF}
 \mathcal{F}[f] := w - \nu - K\Phi_f (\theta,t)\,.
\end{equation}

\noindent From now on, we write 
$\mathcal{F}[f^{\e}]$ for $\mathcal{F}^{\e}_{f^{\e}}$ and write $Q$ for the collision operator
\begin{equation}\label{Q}
 Q(f) := \partial_w \left(\mathcal{F}[f]f\right) + \partial_w^2 f \,. 
 %=\partial_w\left(\left( w - \nu - 
 %K\Phi_f (\theta,t)\right)f\right) + \partial_w^2 f\,.
\end{equation}
Thus, (\ref{perturbation}) is equivalent to
\begin{equation}\label{per2}
  \e \alpha (\partial_t f^{\e} + w\partial_{\theta}f^{\e}) = Q(f^{\e})\,.
\end{equation}

%%%%%%%%
Before stating the main Theorem \ref{thm}, we need to recall the definition of the 
%investigating the properties of $Q$ (normal)
\textit{von Mises (VM)} distribution on the circle $\mathbb{S}^1$
(which is a 1-sphere in $\mathbb{R}^2\,,$ as a special case of the von Mises--Fisher distribution on the $(p-1)$-sphere in $\mathbb{R}^p$).
%i.e.\ the functions $f^{0}$ (displaced 
%Maxwellian) which cancel $Q$.  

Given $\nu \in \text{supp} \, g(\nu)$, let 
\begin{equation}\label{PO}
P_{\nu} = P_{\nu}(\theta,t):=P(\theta,\nu,t)
\end{equation}
be an arbitrary function of
%thus V = \nu + KP is arbitrary too.
$\theta$ and $t\,,$ and denote 
\begin{equation}\label{P}
 P = P(\theta,t) := \int\limits_{\mathbb{R}} P_{\nu_*}(\theta,t) \, g(\nu_*)  \, \dn_*\,.
\end{equation}

%We now benefit the notation from (\ref{K})
%\begin{equation}\label{kp}
% \Phi = \Phi (\theta,t) = \frac{J (\theta,t)}{|J (\theta,t)|}, \quad
%  J (\theta,t) = \int\limits_{\mathbb{R}} 
%  P(\theta, \nu_{*}, t) d\nu_{*}\,.
% \end{equation}
 
\noindent Then, let
\begin{equation}\label{u}
 u = u(\theta, t) := K P (\theta,t)\,.
\end{equation}
As in \cite{e}, with the provided $\nu \in \text{supp} \, g(\nu)$, we denote 
\begin{equation}\label{V}
 V = V(\theta,\nu,t): = \nu + u =  \nu + K P (\theta,t)\,;
\end{equation}
and the expression of the \textit{von Mises} distribution is
\begin{align}\label{equi}
 M_{V}(w)  
 &= \frac{1}{\sqrt{2 \pi}} \text{ exp}\left(-\dfrac{1}{2} 
 (w - V)^2\right) \,.
 %&= \frac{1}{\sqrt{2 \pi}} \text{ exp}\left(-\dfrac{1}{2} 
 %\left(w - \nu - K \iint\limits_{\mathbb{R} \;  \mathbb{T}} 
 %\text{sin}\left(\theta_{*} - \theta\right)P(\theta_{*}, \nu_{*},t) 
 %d\theta_{*} d\nu_{*}\right)^2
 %\right) P(\theta, \nu, t) \,,
\end{align}

The normalization condition is set as follows \cite{e}:
\begin{equation}\label{Pcond}
\int\limits_{\mathbb{T}} P_{\nu}(\theta,t) \, \dth = 1\,.
\end{equation}
Note that as defined in \eqref{PO}, $P_{\nu} = P_{\nu}(\theta,t):=P(\theta,\nu,t)$ is an arbitrary function of
%thus V = \nu + KP is arbitrary too.
$\theta$ and $t\,,$ except in \eqref{Pcond}.  For our main Theorem \ref{thm}, we further assume that 
\begin{equation}\label{pt}
\int\limits_{\mathbb{R}} \nu_{*} \, P_{\nu_*}(\theta,t) \, g(\nu_*) \,
 \dn_* = \hat{C}
 < + \infty\,,
\end{equation}
where $\hat{C}$ is a constant.
%Also, by the normalization condition (\ref{renorm})
\noindent Also, we impose the normalization condition  
\begin{equation}\label{normM}
\int\limits_{\mathbb{R}} M_{V}(w) \, \dw = 1\,.
\end{equation}

Using \eqref{equi}, since 
\begin{equation}\label{m0}
\int\limits_{\mathbb{R}} (w - V) \, M_V(w)  \, \dw = 
\int\limits_{\mathbb{R}} \partial_w M_V(w)  \, \dw=0 \,,
\end{equation}
it follows from (\ref{normM}) and \cite{PD1} that the flux of the \textit{von Mises} distribution is given by 
\begin{align}\label{c1}
\int\limits_{\mathbb{R}} w \, M_V(w) \, \dw = 
  \int\limits_{\mathbb{R}} V M_{V}(w) \, \dw 
 = V\,.
\end{align}

When setting $\e = 0$ in (\ref{per2}), we obtain a simple equation to be solved together with 
equations (\ref{S}) and (\ref{renorm}).  Its solution is a displaced Maxwellian \cite{e}:
\begin{equation}\label{dmaxw}
 f^0:=f^0(\theta,w,\nu,t) = P_{\nu}(\theta, t) M_V(w)\,,
\end{equation}
%\rho
which also corresponds to a particular form of the 
initial conditions \cite[Eq.~(9)]{e}. Later, in Lemma \ref{m2}, we will derive \eqref{dmaxw}, and it is called an equilibrium by Remark \ref{equildef}.
\bigskip
Now, being directly motivated by \cite{PD1}, our goal is to investigate the hydrodynamic limit 
$\e \to 0$ of (\ref{per2}), 
that is, to establish a set of hydrodynamic equations for 
$P_{\nu}(\theta,t)$ and $V(\theta,\nu,t)$.  More precisely, the main result of this 
paper is the following Theorem.
%the distribution and mean-frequency of the oscillators.
%Now, we can state the theorem which establishes the limit $\e \to 0$ of (\ref{per2}).
\begin{theorem}[\textbf{Hydrodynamic limit}]\label{thm}
 Consider equation \eqref{per2}. 
 We assume that the limit $\dd f^0 = \lim_{\e \to 0} f^{\e}$ exists and that the convergence is as 
 regular as necessary (that is, occurs in functional spaces which allow the rigorous 
 justification of all the computation below).  Then, we obtain
\begin{equation}\label{hl}
  f^0(\theta,w, \nu, t) = P_{\nu}(\theta,t) \, M_{V(\theta,\nu,t)} (w)\,,
 \end{equation}
where for any $(\theta,t)$, the function $\nu \in \mathbb{R} \mapsto 
P_{\nu}(\theta,t) \in \mathbb{R}_{+}$ belongs to $L^1(\mathbb{R})$ and meets the conditions (\ref{Pcond}) and has the first moment  as the assumption \eqref{pt}:
%https://en.wikipedia.org/wiki/Moment_(mathematics)
\begin{equation}\label{pt2}
\int\limits_{\mathbb{R}} \nu_{*} \, P_{\nu_*}(\theta,t) \, g(\nu_*) \,
 \dn_* = \hat{C}
 < + \infty\,,
\end{equation}
%\begin{equation}\label{pt0}
% \int\limits_{\mathbb{R}} \nu P_{\nu}(\theta,t) d\nu 
% < + \infty\,,
%\end{equation}
in which $\hat{C}$ is a constant; while as \eqref{V}, 
\[\dd V=V(\theta,\nu,t) = \nu + KP(\theta,t) = \nu + K \int\limits_{\mathbb{R}} \, P_{\nu_*}(\theta,t) \, g(\nu_*) \,  \dn_*\]
belongs to $\mathbb{R}\,,$ and the normalization (\ref{normM}) of $M_V$ holds.  The functions $P_{\nu}(\theta,t)$ and 
$V(\theta,\nu,t)$ satisfy the following system of hydrodynamic limit equations:
\begin{numcases}{}
 \partial_t P_{\nu} + \partial_{\theta}(V P_{\nu}) = 0 
 \quad \forall \nu \in \mathbb{R}\,, \label{hl1}\\
 P \, \partial_t V +  P \, (Y+KP) \, \partial_{\theta} V + \partial_{\theta} P = 0\,, \label{hl2}
\end{numcases}
%where $P_{\nu}$ is as in (\ref{PO}), $u$ is as in (\ref{u}).
with
\begin{equation}\label{hl3}
P(\theta,t) = \int\limits_{\mathbb{R}} P_{\nu_*}(\theta,t) \, g(\nu_*)  \, \dn_*\,, 
\quad P (\theta,t) \, Y(\theta,t)= \int\limits_{\mathbb{R}} 
 \nu_* \, P_{\nu_*}(\theta,t) \, g(\nu_*) \dn_*\,. 
\end{equation}
\end{theorem}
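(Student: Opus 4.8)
The strategy is the standard Chapman--Enskog / GCI argument of Degond--Dimarco--Mac \cite{PD1}, adapted to our setting. The starting point is equation \eqref{per2}, $\e\alpha(\partial_t f^\e + w\,\partial_\theta f^\e) = Q(f^\e)$. First I would pass to the limit $\e\to 0$ in the right-hand side: since the left-hand side is $\mathcal{O}(\e)$ and $f^\e \to f^0$ regularly, one gets $Q(f^0) = 0$. Using the characterization of the kernel of $Q$ (which should be established in Subsection \ref{qprop} via the Hardy-type inequality of Subsection \ref{hard}, guaranteeing that solutions of $Q(f)=0$ with finite weighted norm are exactly the displaced Maxwellians), this forces $f^0(\theta,w,\nu,t) = P_\nu(\theta,t)\,M_{V(\theta,\nu,t)}(w)$ with $V = \nu + KP$ as in \eqref{V}, which is \eqref{hl}. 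The integrability $\nu\mapsto P_\nu(\theta,t)\in L^1(\mathbb{R})$, the normalization \eqref{Pcond}, the moment assumption \eqref{pt2}, and \eqref{normM} are carried along as hypotheses on the limiting profile, so \eqref{hl3} is just the definition of $P$ and $PY$.

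Next I would extract the evolution equations for $P_\nu$ and $V$. The macroscopic equations come from testing \eqref{per2} against the collision invariants of $Q$. Integrating \eqref{per2} in $w$ over $\mathbb{R}$: the operator $Q(f) = \partial_w(\mathcal{F}[f]f) + \partial_w^2 f$ is a $w$-divergence, so $\int_{\mathbb{R}} Q(f^\e)\,\dw = 0$ by the decay conditions in \eqref{ini}; dividing by $\e\alpha$ and letting $\e\to 0$ gives $\partial_t\!\int w$-th... more precisely $\partial_t\!\int_{\mathbb{R}} f^0\,\dw + \partial_\theta\!\int_{\mathbb{R}} w f^0\,\dw = 0$. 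Using \eqref{dmaxw}--type identities, $\int_{\mathbb{R}} f^0\,\dw = P_\nu$ and $\int_{\mathbb{R}} w f^0\,\dw = V P_\nu$ by \eqref{c1}, which yields \eqref{hl1}. This is the ``mass'' conservation law and holds for each fixed $\nu$.

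For \eqref{hl2} I would use the Generalized Collision Invariants characterized in Subsection \ref{gci0}: a GCI is a function $\psi = \psi(\theta,w,\nu)$ (depending on the local profile) such that $\int_{\mathbb{R}} \psi\, Q(f)\,\dw = 0$ for all admissible $f$, even though $w$ itself need not be a genuine collision invariant because $\mathcal{F}[f]$ is not odd. Multiplying \eqref{per2} by the GCI, integrating in $w$, then also integrating against $g(\nu)\,\dn$ over $\mathbb{R}$ (to produce the combination $P$, $PY$, and $KP$ appearing in \eqref{hl2}), dividing by $\e\alpha$ and letting $\e\to 0$, the collision term drops and one is left with a conservation law whose flux and source, after substituting $f^0 = P_\nu M_V$ and computing the Gaussian moments $\int w M_V = V$, $\int w^2 M_V = 1 + V^2$, collapse to $P\,\partial_t V + P(Y+KP)\,\partial_\theta V + \partial_\theta P = 0$; here the term $Y + KP$ arises because $V = \nu + KP$ so $\int \nu_* P_{\nu_*} g\,\dn = PY$ contributes the $PY$ piece and the self-consistent coupling contributes the $KP$ piece, while $\partial_\theta P$ comes from the second-moment (pressure-like) term $\int w^2 M_V = 1 + V^2$ combined with \eqref{hl1}.

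\textbf{Main obstacle.} The delicate point is the GCI computation for \eqref{hl2}: one must show that the relevant GCI exists, is well defined on $\mathbb{R}$ (this is exactly where the Hardy-type inequality over the whole real line enters, to control the weighted integrals defining the GCI and to justify integrations by parts with the Gaussian weight), and then carry out the moment bookkeeping so that the nonlinear, nonlocal term $K\Phi_{f^\e}$ reduces in the limit to the clean coefficient $Y + KP$. Equally, justifying the exchange of $\lim_{\e\to 0}$ with the $w$- and $\nu$-integrations requires the ``as regular as necessary'' convergence hypothesis together with uniform-in-$\e$ weighted moment bounds; this is assumed in the statement, so under that assumption the argument closes. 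Once \eqref{hl1}, \eqref{hl2}, \eqref{hl3} are in hand and \eqref{hl} is identified, the theorem follows.
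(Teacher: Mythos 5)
Your proposal is correct and follows essentially the same route as the paper: identify $f^0$ as an equilibrium via $Q(f^0)=0$ and the characterization of equilibria in Lemma \ref{m2} to get \eqref{hl}, obtain \eqref{hl1} by testing \eqref{per2} against functions of $\nu$ alone (the genuine collision invariants), and obtain \eqref{hl2} by testing against the GCI $\chi_V(w)=w-V$ and computing Gaussian moments, with the Hardy-type inequality supplying the Poincar\'{e}-type coercivity that guarantees the GCI exists. The only cosmetic difference is that in the paper the $\partial_\theta P$ term falls out directly from the moment identity $\int_{\mathbb{R}} w\,(w-V)\,M_V(w)\,\dw = 1$ rather than from combining a second-moment equation with \eqref{hl1}.
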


The proof of Theorem \ref{thm} will be presented in Subsection \ref{proofthm}. 
\begin{comment}
\begin{remark}
The meaning of (\ref{pt}) is that the function 
$\nu \in \mathbb{R} \mapsto 
P_{\nu}(\theta,t) \in \mathbb{R}_{+}$
has its first moment finite.
\end{remark}
\end{comment}

% We can take the kernel $K(|x|)$ such that the Taylor expansion is true.
%Mexican hat or any rotational symmetric function.
 
 We now discuss the importance of the results. Equation (\ref{hl1}) 
is a continuity equation for the density $P_{\nu}$ of oscillators of a given natural frequency $\nu\,.$ Indeed, since the interactions do
not modify the natural frequencies ${\nu}$ of the oscillators, we must have an equation
representing the conservation of oscillators for each of these natural frequencies $\nu$. 
However, the external force under white
noise modifies the actual frequencies $w$ of the oscillators. This
interaction couples oscillators with different natural frequencies $\nu$. Therefore, the
mean frequency $V(\theta,t) = Y + KP$ is common to all oscillators (and consequently, does not
depend on $\nu$) and conforms to a balance equation which bears similarities with the gas
dynamics momentum conservation equations.  Having this information, 
we can recast the system of hydrodynamic limit equations \eqref{hl1}--\eqref{hl2}
%Kuramoto-Sakaguchi model equations 
in terms of the density function $P$ as follows. 

Multiplying \eqref{hl1} by $g(\nu)\,,$ the dependence on $\nu$ in the resulting equation can 
 be integrated out thanks to \eqref{P}, and applying \eqref{V} to \eqref{hl2}, we obtain the following system of equations:
 \begin{numcases}{}
\partial_t P + \partial_{\theta} (P(Y+KP))  = 0\,, \label{hl1b}\\
K P \, \partial_t P +  KP(Y+KP) \, \partial_{\theta} P + \partial_{\theta} P = 0\,. \label{hl2b}
 %&P(\partial_t V + V \partial_{\theta} V) + \partial_{\theta} P = 0
 %\,. \label{hl3b}
\end{numcases}
%Moreover, multiplying (\ref{hl1b}) by $V$ and adding the result to (\ref{hl3b}) 
%gives us the conservation equation
%\begin{equation}\label{conser1}
% \partial_t (PV) + \partial_{\theta} (PV^2 + P) =0\,.
%\end{equation}
The assumption \eqref{pt} means that $PY$ is a constant.  Hence, the second equation \eqref{hl2b} can be written in the conservation of momentum density
%PY + KP^2
as follows:
%(Prof.~ Eitan Tadmor, personal communication, February 7--8, IMS, NUS, 2023) 
\begin{equation}\label{cmoment}
    \frac{1}{2} \partial_t(PY + KP^2) + \frac{1}{2} \partial_{\theta}\left(KYP^2 + \frac{2}{3} K^2 P^3 + 2P\right) = 0.
\end{equation}
%\frac{1}{2} factor 
Therefore, $Y(\theta,t)$ and $P(\theta,t)$ (so $V(\theta,\nu,t)$) can first be computed by solving the system 
(\ref{hl1b})--(\ref{hl2b}).  Once $V(\theta,\nu,t)$ is found, equation (\ref{hl1}) is a 
transport equation with the provided coefficients, which can be simply integrated  
(given that the vector field $V$ is smooth, that is at least $C^1$).  Equation 
(\ref{hl1b}) represents the conservation of the total density $P$ of oscillators 
(that is, integrated with respect to $\nu \in \mathbb{R}$) with $V(\theta,t) = Y + KP\,.$ 
%(Prof.~ Eitan Tadmor, personal communication, March 23, Hanoi, 2018).  

%while 
%(\ref{hl2b}) expresses the conservation of the ``angular momentum density'' 
%$PY$.  Thanks to the mass conservation (\ref{hl1b}), equation (\ref{hl2b}) 
%can be rewritten (for smooth solutions) as a transport equation for the 
%``average rotation velocity'' $Y$:
%\begin{equation}\label{arv}
%\partial_t Y +  V\partial_{\theta} Y = 0\,,
%\end{equation}
%which means that the average rotation velocity $Y$ is convected at the flow speed $V$.

%\begin{remark}\label{if0}
 %If $Y = 0$ at $t=0$, then 
 %by (\ref{arv}), 
 %the above system is simplified to
% \begin{align}
% &\partial_t P + \partial_{\theta} (KP^2)  = 0\,, \label{hl1c}\\
% &KP\partial_t P + K^2P^2 \partial_{\theta} P + \partial_{\theta} P = 0
% \,. \label{hl3c}
%\end{align}
%\end{remark}

%%%%%%
In preparation for the proof of our major Theorem \ref{thm}, we will show in the next Subsection a Hardy-type inequality.

%%%%%
\subsection{A Hardy-type inequality in \texorpdfstring{$\mathbb{R}$}{R}}\label{hard}
In order to state and prove a Hardy-type inequality over the whole real line, we first introduce our context, and more details of the preliminary results can be found in Appendices \ref{hardytype}, \ref{appii}, and \ref{hiw1}.  In particular, consider the space
%to single out unique solutions by requesting solution $\psi$ have zero mean (average) over 
%\mathbb{R}
\begin{equation}\label{w0}
W_0 = \left\{\varphi \in H^1(\mathbb{R}) \cap L^1(\mathbb{R}) \,, \int\limits_{\mathbb{R}} \varphi(w) \, \dw = 0\right\}\,.
\end{equation}
This space $W_0$ has some nice properties below.  
%intersect with $L^1(R)$ as $\mu(\Omega) > \infty$
%
%Absolute convergence test f in L1(R) then \int_R f exist https://www.math.cuhk.edu.hk/course_builder/1415/math2060b/Notes%202a%20Improper%20integrals%202015.pdf
%
%integral 0 so, having areas upper (phi >= 0) and lower of x-axis (phi <= 0).  Actually we have a zero of phi, i.e., w = see draft.
%AC https://sites.ualberta.ca/~rjia/Math418/Notes/chap3.pdf

%%%%
By \cite[Remark 13 on p.~217]{brezis}, $C_c^{\infty}(\mathbb{R})$ is dense in the Sobolev space $H^1(\mathbb{R})=W^{1,2}(\mathbb{R})\,,$ and thus $H^1(\mathbb{R}) = H_0^1(\mathbb{R})= W_0^{1,2}(\mathbb{R})= \overline{C_c^{\infty}(\mathbb{R})}^{\| \cdot \|_{H^1(\mathbb{R})}}\,,$ where $C_c^{\infty}(\mathbb{R})$ represents the space of infinitely differentiable functions with compact support in $\mathbb{R}$ \cite[p.~256, p.~259, and p.~273]{evans}. 

Let $\varphi \in W_0\,.$  Then, $\varphi \in H^1(\mathbb{R})\,,$ thus by \cite[Theorem 7.16]{leonipde} (as explained in Theorem \ref{mainH1AC} and Corollary \ref{corH1AC}), $\varphi$ admits an \textbf{absolutely continuous} representative $\overline{\varphi}: \mathbb{R} \to \mathbb{R}$ (belonging to $AC(\mathbb{R})$ defined in \eqref{ACrep}), which is also \textbf{locally absolutely continuous} (that is, $\overline{\varphi}$ is in $AC_{\tu{loc}}(\mathbb{R})$ defined by \eqref{aclocr}), with $\varphi=\overline{\varphi}$ almost everywhere (a.e.), such that both $\overline{\varphi}$ and its classical derivative $\overline{\varphi}'$ belong to $L^2(\mathbb{R})\,.$
Moreover, by Corollary \ref{corH1AC}, $\overline{\varphi}$ is H\"{o}lder continuous of exponent $1/2\,,$ uniformly continuous, and continuous over all $\mathbb{R}\,,$ so $\overline{\varphi} \in C(\mathbb{R})\,.$  By Remark \ref{idh1ac}, throughout this paper, \textbf{an element $\varphi \in H^1(\mathbb{R}) \subset H^1_{\tu{loc}}(\mathbb{R})$ is assumed to identically be its locally absolutely continuous representative $\overline{\varphi}$ in $AC_{\tu{loc}}(\mathbb{R})$} (defined by \eqref{aclocr}).  

%for convenience, this locally absolutely continuous representative $\overline{\varphi}$ of $\varphi \in H^1(\mathbb{R})$ is still denoted by $\varphi\,.$ 

Since $\varphi \in C(\mathbb{R})$ and $\varphi \in L^1(\mathbb{R})$ (as $\varphi \in W_0$), it follows from Lemma \ref{irl} that the Lebesgue integral equals to the improper Riemann integral over all $\mathbb{R}$:
\[\int_{\mathbb{R}} \varphi \, \dm = \int_{-\infty}^{\infty}  \varphi (w)  \, \dw\,.\]

%%%
\bigskip
Now, we present our Hardy-type inequality developed from the Hardy's inequality in \cite[Theorem~1.14]{hardyopic} or \cite[Theorem 4]{Muck1972} by Muckenhoupt (explained in Theorem \ref{ttam} within our context), over the domain of the whole real line. 

By Proposition \ref{muac}, the function $M_V(w) = \dfrac{1}{\sqrt{2\pi}} \tu{exp} \left( -\dfrac{1}{2}(w-V)^2 \right)$ defined in \eqref{equi} is in $AC_{\tu{loc}}(\mathbb{R})$ \eqref{aclocr}, thus $M_V(w)$ is continuous (so measurable),
%https://math.stackexchange.com/questions/696124/f-a-real-continuous-function-is-it-measurable
positive, and almost everywhere (a.e.) finite on $(-\infty,\infty)\,;$ that is, $M_V(w)$ is a continuous weight function over $\mathbb{R}$ (by Definition \ref{weightdef}).

Also by Proposition \ref{muac}, for any $\varphi \in W_0$ defined in \eqref{w0}, our function $\dd u(w) = \frac{\varphi(w)}{M_V(w)}$ (specified by \eqref{upm}) belongs to $AC_{\tu{loc}}(\mathbb{R})$ \eqref{aclocr}.  In these settings, we obtain the following result.

%so $u \in AC(-\infty,\infty)$ (locally absolutely continuous \cite{frank22}).
\begin{theorem}[\textbf{Hardy-type inequality in $\mathbb{R}$}]\label{hardylem1}
There exists a finite constant $\hat{C}$ such that for any $u(w) \in AC_{\tu{loc}}(\mathbb{R})\,,$ the following inequality holds:
\begin{equation}\label{hardyineq}
\int_{-\infty}^{\infty}(u(w))^2 M_V(w) \, \dw  \leq \hat{C} \int_{-\infty}^{\infty} (u'(w))^2 M_V(w) \, \dw \,.
\end{equation}
\end{theorem}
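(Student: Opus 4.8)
The plan is to deduce \eqref{hardyineq} from the one–dimensional weighted Hardy inequality of Muckenhoupt (Theorem~\ref{ttam}, see also Appendix~\ref{hiw1}), applied separately on the half–lines $(-\infty,V]$ and $[V,\infty)$, and then to glue the two estimates using the vanishing–mean condition built into the construction of $u$. Indeed, in the situation of interest $u=\varphi/M_V$ with $\varphi\in W_0$ (defined in \eqref{w0}), so that, since $\int_{\mathbb{R}}M_V\,\dw=1$,
\[
\int_{\mathbb{R}}u\,M_V\,\dw=\int_{\mathbb{R}}\varphi\,\dw=0 .
\]
I would emphasize that the inequality must be understood for such $u$ (equivalently, for $u\in AC_{\tu{loc}}(\mathbb{R})$ with $\int_{\mathbb{R}}u\,M_V\,\dw=0$): for a nonzero constant $u$ the right–hand side of \eqref{hardyineq} vanishes while the left–hand side does not, so this normalization is indispensable.

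Set $g(w):=\int_V^w u'(t)\,\dt$, so that $g=u-u(V)$ by local absolute continuity. Expanding the square and using $\int_{\mathbb{R}}u\,M_V=0$ and $\int_{\mathbb{R}}M_V=1$ gives the identity
\[
\int_{\mathbb{R}}g^2 M_V\,\dw=\int_{\mathbb{R}}u^2 M_V\,\dw+u(V)^2\ \ge\ \int_{\mathbb{R}}u^2 M_V\,\dw ,
\]
so it suffices to bound $\int_{\mathbb{R}}g^2 M_V\,\dw=\int_{-\infty}^{V}g^2 M_V\,\dw+\int_{V}^{\infty}g^2 M_V\,\dw$. On $[V,\infty)$, $g(x)=\int_V^x u'(t)\,\dt$ is a Hardy operator, and I would apply the weighted Hardy inequality with both weights taken equal to $M_V$, obtaining
\[
\int_{V}^{\infty}\Bigl(\int_V^x u'(t)\,\dt\Bigr)^2 M_V(x)\,\dx\ \le\ C_{+}\int_{V}^{\infty}(u'(x))^2 M_V(x)\,\dx ,
\]
valid as soon as the Muckenhoupt functional $B_{+}:=\sup_{r>V}\bigl(\int_r^\infty M_V\bigr)^{1/2}\bigl(\int_V^r M_V^{-1}\bigr)^{1/2}$ is finite. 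Since $M_V$ is symmetric about $w=V$, the substitution $w\mapsto 2V-w$ turns the integral over $(-\infty,V]$ into one of the same type controlled by the same constant, so $\int_{-\infty}^{V}g^2 M_V\le C_{+}\int_{-\infty}^{V}(u')^2 M_V$. Adding the two half–line bounds and combining with the displayed reduction yields \eqref{hardyineq} with $\hat C=C_{+}$.

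It therefore remains to verify $B_{+}<\infty$, which I regard as the only genuine point of the proof. For $r-V\ge 1$, a Mills–ratio type bound gives $\int_r^\infty M_V(x)\,\dx\le\frac{1}{\sqrt{2\pi}\,(r-V)}\,e^{-(r-V)^2/2}$, while splitting $\int_V^r M_V^{-1}=\sqrt{2\pi}\int_0^{r-V}e^{s^2/2}\,\ds$ at $s=1$ and using $\tfrac{d}{ds}e^{s^2/2}=s\,e^{s^2/2}$ gives $\int_V^r M_V^{-1}(x)\,\dx\le\sqrt{2\pi}\,e^{(r-V)^2/2}$; hence the product of the two factors is $\le (r-V)^{-1/2}\le 1$. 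For $V<r<V+1$ both factors are finite and bounded by continuity of the integrals, so $B_{+}<\infty$, and Muckenhoupt's criterion provides a finite constant $C_{+}$. The heart of the matter is precisely this compensation: the super–exponential growth of the unbounded weight $M_V^{-1}$ away from $V$ is, uniformly in the cutoff $r$, exactly balanced by the super–exponential decay of the Gaussian tail $\int_r^\infty M_V$, which is where the explicit form \eqref{equi} of $M_V$ is used.
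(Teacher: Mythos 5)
Your proof is correct, and it takes a genuinely different route from the paper's. The paper first locates a point $d$ with $\varphi(d)=0$ (Proposition \ref{d0}), splits $\mathbb{R}$ at $d$, and applies Muckenhoupt's criterion on $[d,\infty)$ and $(-\infty,d]$ separately, estimating the Muckenhoupt functional via asymptotic expansions of $\int_a^\infty e^{-x^2}\,\dx$ and $\int_b^a e^{x^2}\,\dx$. You instead split at the center $V$ of the Gaussian, replace $u$ by $g=u-u(V)$, and absorb the constant $u(V)$ through the identity $\int_{\mathbb{R}} g^2M_V\,\dw=\int_{\mathbb{R}} u^2M_V\,\dw+u(V)^2$, which uses exactly the normalization $\int_{\mathbb{R}}u\,M_V\,\dw=\int_{\mathbb{R}}\varphi\,\dw=0$ built into $W_0$. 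Your route buys two things the paper's does not. First, a constant uniform in $u$: the paper's $B_{\tu{L}}$ in \eqref{BL1} depends on the location $d$ of the zero of $\varphi$ (and grows without bound as $d\to-\infty$), so the constant produced there in fact depends on $\varphi$, whereas your $B_{+}$ depends only on $V$, and by the symmetry of $M_V$ about $V$ a single computation covers both half-lines. Second, a rigorous bound on the Muckenhoupt functional: your Mills-ratio estimate controls the supremum over all $r$ (with the near-$V$ range handled by continuity), while the paper's argument via the leading term of an asymptotic expansion as $r\to\infty$, culminating in \eqref{BLapp1}, only addresses the large-$r$ regime and is not, as written, a complete proof that the supremum is finite. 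Your opening caveat is also well taken: as literally stated for all $u\in AC_{\tu{loc}}(\mathbb{R})$ the inequality fails for nonzero constants, so the mean-zero normalization (equivalently $u=\varphi/M_V$ with $\varphi\in W_0$, as in \eqref{upm}) is an essential hypothesis that both your proof and the paper's implicitly rely on.
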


\begin{proof}
By Proposition \ref{d0}, there is some $d\in \mathbb{R}$ such that $\varphi(d) = 0\,,$ so $u(d) = 0\,.$  We then consider two cases of subdomains of $w\,:$ Case 1 ($w \in [d,\infty)$) and Case 2 ($w \in (-\infty,d]$).   

\noindent
\textbf{Case 1.}  We prove the Hardy-type inequality with $w$ in $[d,\infty)\,,$ that is, for all $u \in AC_{\tu{L}}([d,\infty))\,,$ there exists a finite constant $C_{\tu{L}} > 0$ such that
\begin{equation}\label{hardyineq1}
\int_{d}^{\infty}(u(w))^2 M_V(w) \, \dw  \leq C_{\tu{L}} \int_{d}^{\infty} (u'(w))^2 M_V(w) \, \dw \,.
\end{equation}

By Theorem \ref{ttam}, for the validity of \eqref{hardyineq1}, we only need to show that
\begin{equation}\label{BL1}
B_{\tu{L}} = \sup_{d < r < \infty} \left( \int_r^{\infty} M_V(w) \, \dw \right) \left( \int_d^r \frac{1}{M_V(w)} \, \dw \right) < \infty\,.
\end{equation}
(Within \eqref{BL1}, it is known that $1/M_V(w) = +\infty$ if $M_V(w) = 0\,;$  for the product inside the supremum, $0\cdot \infty$ implies $0\,.$)
% d<r<\infty \cite{frank22}, not d \leq r
%
%\lambda constant, web Muckenhoupt's proof of the Hardy inequality in dimension 1 https://canizo.org/page/26

To prove \eqref{BL1}, we first let $x=\dfrac{w-V}{\sqrt{2}}$ to get the following explicit expressions:
\begin{align}\label{finiteBL1}
\begin{split}
\int_r^{\infty} M_V(w) \, \dw = \int_r^{\infty} \frac{1}{\sqrt{2\pi}} \,  \tu{exp} \left( -\frac{1}{2}(w-V)^2 \right) \, \dw = \frac{1}{\sqrt{2\pi}}  \sqrt{2} \int_{\frac{r-V}{\sqrt{2}}}^{\infty} \tu{exp} \left( -x^2 \right) \, \dx\,, 
\end{split}
\end{align}
and
\begin{align}\label{finiteBL2}
\begin{split}
\int_d^{r} \frac{1}{M_V(w)} \, \dw = \int_d^{r} \sqrt{2\pi} \, \tu{exp} \left( \frac{1}{2}(w-V)^2 \right) \, \dw =
\sqrt{2\pi} \sqrt{2} \int_{\frac{d-V}{\sqrt{2}}}^{\frac{r-V}{\sqrt{2}}} \tu{exp} \left(x^2 \right) \, \dx\,.
\end{split}
\end{align}
Without loss of generality, assume $0 \leq d-V < w-V < \infty$ and $0 \leq d - V < r -V <\infty\,.$  We then denote 
\[x= \frac{w-V}{\sqrt{2}}\,, \quad a= \frac{r-V}{\sqrt{2}}\,, \quad b= \frac{d-V}{\sqrt{2}}\]
and thus assume
\[0 \leq b < x < \infty\,, \quad 0 \leq b < a < \infty\,.\]
%Laurent series https://math.stackexchange.com/questions/1246136/maclaurin-series-expansion-for-e-1-x2

Now, we verify that $B_{\tu{L}}$ in \eqref{BL1} is finite.  Indeed, by using optimal truncation for the asymptotic expansion as $r \to \infty$ or $a \to \infty$ (with integration by parts), we obtain
%https://canizo.org/page/26
%
%BEST optimal trunction https://www.damtp.cam.ac.uk/user/gold/pdfs/teaching/L3.pdf
%
%erf(x) see also Taylor series expansion at x=0 https://www.wolframalpha.com/input?i2d=true&i=erf%5C%2840%29x%5C%2841%29
%
%BEST ibp https://math.stackexchange.com/questions/2950179/proving-approximation-of-texterf-with-taylor-expansion
%
%and note the situation $a \to +\infty\,,$
%series expansion of the integral at $x_0 = +\infty$ (then also integration by parts), we obtain 
%series expansion of the integral at $x_0 = +\infty$ is as Laurent series
%
%series expansion of the integral at $x_0 = 0$ is as Taylor series (wolframalpha).  Our f(x)= e^(-1/x^2) is differentiable at x_0= 0 https://math.stackexchange.com/questions/1246136/maclaurin-series-expansion-for-e-1-x2
%
%The Taylor series of a real or complex-valued function f(x), that is infinitely differentiable at a real or complex number a, is the power series https://en.wikipedia.org/wiki/Taylor_series
%
%https://math.stackexchange.com/questions/51770/taylor-expansion-at-infinity
%
%https://www.wolframalpha.com/
\begin{equation}\label{hconst}
\int_a^{\infty} e^{-x^2} \, \dx \approx  \frac{1}{2a} e^{-a^2}\,, \quad \int_b^a e^{x^2} \, \dx \approx  \frac{1}{2a} e^{a^2}\,.
\end{equation}
%BEST ibp https://math.stackexchange.com/questions/2950179/proving-approximation-of-texterf-with-taylor-expansion
%
%https://www.wolframalpha.com/input?i2d=true&i=Divide%5BSqrt%5B%CF%80%5D%2C2%5Derfc%5C%2840%29a%5C%2841%29
%
More specifically, for the first integral of \eqref{hconst}, using the change of variables $u = x^2\,,$ we get
\begin{equation}\label{hconst1}
\int_a^{\infty} e^{-x^2} \, \dx  = \frac{1}{2} \int_{a^2}^{\infty} \frac{e^{-u}}{\sqrt{u}} \, \du = \frac{1}{2} \left( \frac{e^{-a^2}}{\sqrt{a^2}} -\frac{1}{2} \int_{a^2}^{\infty} \frac{e^{-u}}{u^{3/2}} \, \du \right)\,,
\end{equation}
where the last equality is obtained by integrating by parts. 
%w = 1/(\sqrt{u}), dw = (-1/2) u^{-3/2} du 
%dv= e^{-u} du, v=-e^{-u}
We can continue by employing partial integration again and over until we reach an asymptotic series:
\begin{equation}\label{hconst1s}
\int_a^{\infty} e^{-x^2} \, \dx   =e^{-a^2} \left( \frac{1}{2a} - \frac{1}{4a^3} + \frac{3}{8a^5} - \frac{15}{16a^7} + \mathcal{O}\left(  \left( \frac{1}{a} \right)^8 \right) \right) \,.
\end{equation}
This is an asymptotic expansion (series) for $\dd \int_a^{\infty} e^{-x^2} \, \dx$ as $a \to \infty\,.$
(Note that $e^{-1/w^2}$ cannot be represented by a Taylor series around $w_0=0\,,$ but $e^{-1/w^2}$ can be expressed as a Laurent series by replacing $w$ with $-1/w^2$ in the power series for the exponential function $e^w\,.$)
%so not analytic at $w_0=0$ by definition.
%
%https://en.wikipedia.org/wiki/Laurent_series
%
Similarly, the second integral of \eqref{hconst} (after the change of variable $u=x^2$) has the expression
\begin{align}\label{hconst2}
\begin{split}
\int_b^{a} e^{x^2} \, \dx  &= \frac{1}{2} \int_{b^2}^{a^2} \frac{e^{u}}{\sqrt{u}} \, \du = \frac{1}{2} \left( \frac{e^{a^2}}{\sqrt{a^2}} +\frac{1}{2} \int_{b^2}^{a^2} \frac{e^{u}}{u^{3/2}} \, \du \right)\\ 
&=e^{a^2} \left( \frac{1}{2a} + \frac{1}{4a^3} + \frac{3}{8a^5} + \frac{15}{16a^7} + \mathcal{O}\left(  \left( \frac{1}{a} \right)^8 \right) \right)\,.
\end{split}
\end{align}
Thus, 
\begin{equation}\label{BLapp1}
B_{\tu{L}} \approx \frac{1}{4a^2} = \frac{1}{4\left(\dfrac{r-V}{\sqrt{2}}\right)^2} < \infty\,.
\end{equation}
Hence, \eqref{BL1} is true. 
%(see also \cite{canizo_muck}).

Recall that by Proposition \ref{muac}, $M_V(w) \in AC_{\tu{loc}}(\mathbb{R})\,,$ so $M_V(w)$ is continuous weight function in $\mathbb{R}\,.$ 
%(because $\mathbb{R}$ is a complete metric space with the metric $d(x,y) = |x-y|$).

%%%%%%%%%%%%%%%%
\begin{comment}
Note that WolframAlpha can be used as another way to estimate an upper bound on $B_L$ in \eqref{BL1}:
\[B_L \leq 0.478813 \,,\]
which can be nearly obtained when $a \approx 0.635967$ and $b \approx 1.29196 \times 10^{-7}\,.$  Other values of $a$ and $b$ (with $a>b\geq 0$) also give us the bound $B_L \leq 0.478813 \,.$  Using this $a \approx 0.635967\,,$ we can estimate $B_L$ in \eqref{BLapp1} as $B_L \approx \dfrac{1}{4a^2} \approx 0.618117\,,$ which is close to the expected value $0.478813\,.$ 
\end{comment}
%In the Appendix \ref{wabl}

%%%%
\begin{comment}
Note that usually when dealing with definite integrals in Complex analysis, we mean $|\int f(z) \, \dz| < \infty\,,$ and there are no infinite complex numbers. Our considering complex numbers are less than infinity. Any complex number can be most easily compared to ``infinity'' (an arbitrarily large number) by simply calculating its magnitude, that is the square root of the sum of the squares of its real part $\tu{Re}(z) = x$ and an imaginary part $\tu{Im}(z) = y$
 components. Of course, if you can calculate that magnitude, it is less than infinity.
%https://math.stackexchange.com/questions/1279767/is-the-following-complex-number-finite
\end{comment}
Therefore, applying \cite[Theorem~1.14]{hardyopic} or \cite[Theorem 4]{Muck1972} by Muckenhoupt (restated in Theorem \ref{ttam} for our settings), since \eqref{BL1} is valid, it follows that the Hardy-type inequality \eqref{hardyineq1} holds; furthermore, if $C_{\tu{L}}$ is the smallest constant for which
%best = least
\eqref{hardyineq1} is satisfied, then
\begin{equation}\label{blcl1}
B_{\tu{L}} \leq C_{\tu{L}} \leq 4B_{\tu{L}}\,.
\end{equation}

\bigskip
\noindent
\textbf{Case 2.}  We prove the Hardy-type inequality on $(-\infty,d]\,,$ that is, for all $u$ in $AC_R((-\infty,d])\,,$ there exists a finite constant $\tilde{C}_{\tu{L}}$ such that
\begin{equation}\label{hardyineq2}
\int_{-\infty}^d (u(w))^2 M_V(w) \, \dw \leq \tilde{C}_{\tu{L}}  \int_{-\infty}^d (u'(w))^2 M_V(w) \, \dw \,.
\end{equation}
Benefiting from the approach in \cite[p.~9]{hardyopic}, our substitution $y =-w$ transforms $u(w) \in AC_{\tu{R}}((-\infty,d])$ into $\tilde{u}(y) \in AC_{\tu{L}}([-d,\infty))\,,$ with $\tilde{u}(y) = u(-y)\,,$ thus $ \tilde{u}(-d) = u(d)= 0\,,$ and with $\tilde{M}_V(y) = M_V(-y)\,.$ 
The inequality \eqref{hardyineq2} is equivalent to
\begin{equation}\label{hardyineq2i}
\int_{-d}^{\infty} (\tilde{u}(y))^2 \tilde{M}_V(y) \, \dy  \leq \tilde{C}_{\tu{L}} \int_{-d}^{\infty} (\tilde{u}'(y))^2 \tilde{M}_V(y) \, \dy \,.
\end{equation}
%w from -infty to d, so y=-w=g(w) from g(-\infty)=\infty to g(d)=-d. Then \int_{-\infty}^d dw= -\int_{\infty}^{-d} dy=\int_{-d}^{\infty} dy
%
%https://en.wikipedia.org/wiki/Integration_by_substitution
Hence, instead of proving \eqref{hardyineq2}, we show that \eqref{hardyineq2i} holds.
Indeed, by a similar argument to Case 1, it is true that \begin{equation}\label{BL2}
\tilde{B}_{\tu{L}} = \sup_{-d < r < \infty} \left( \int_r^{\infty} M_V(w) \, \dw \right) \left( \int_{-d}^r \frac{1}{M_V(w)} \, \dw \right) < \infty\,.
\end{equation}

Again, using \cite[Theorem~1.14]{hardyopic} or \cite[Theorem 4]{Muck1972} by Muckenhoupt (described by Theorem \ref{ttam} within our context) with $\tilde{u}(-d)=0\,,$ since \eqref{BL2} is valid, it follows that the Hardy-type inequality \eqref{hardyineq2i} (equivalently, \eqref{hardyineq2}) holds; furthermore, if $\tilde{C}_{\tu{L}}$ is the smallest constant for which
%best = least
\eqref{hardyineq2} is satisfied, then
\begin{equation}\label{blcl2}
\tilde{B}_{\tu{L}} \leq \tilde{C}_{\tu{L}} \leq 4\tilde{B}_{\tu{L}}\,.
\end{equation}

\bigskip
%%%%%%

We choose $\hat{C}$ such that
\begin{equation}\label{hatCL}
\tu{max}\{B_{\tu{L}},\tilde{B}_{\tu{L}}\} \leq \hat{C} \leq \tu{min} \{4B_L,4\tilde{B}_{\tu{L}}\}\,.
\end{equation}
Then, \eqref{hardyineq1} and \eqref{hardyineq2} (by the non-negativity of the integrands) respectively hold with the selected $\hat{C}$ from \eqref{hatCL}:
\begin{equation}\label{hardyhat1}
\int_d^{\infty}(u(w))^2 M_V(w) \, \dw  \leq \hat{C} \int_d^{\infty} (u'(w))^2 M_V(w) \, \dw 
\end{equation}
and
\begin{equation}\label{hardyhat2}
\int_{-\infty}^d(u(w))^2 M_V(w) \, \dw  \leq \hat{C} \int_{-\infty}^d (u'(w))^2 M_V(w) \, \dw\,. 
\end{equation}
Therefore, summing \eqref{hardyhat1} and \eqref{hardyhat2}, with $M_V(w)$ defined by \eqref{equi}, $\varphi \in W_0$ specified in \eqref{w0}, and $u(w)$ given by \eqref{upm} (so $u(w) \in AC_{\tu{loc}}(\mathbb{R})$ \eqref{aclocr}), we obtain the desired inequality
\begin{equation}\label{hardyhat}
\int_{-\infty}^{+\infty}(u(w))^2 M_V(w) \, \dw  \leq \hat{C} \int_{-\infty}^{+\infty} (u'(w))^2 M_V(w) \, \dw\,. 
\end{equation}
\end{proof}
%https://en.wikipedia.org/wiki/Integration_by_substitution

%%%%
In order to prove the main Theorem \ref{thm}, we also need the following Subsection, thanks to \cite{PD1} and the references therein.

\subsection{Properties of collision operator Q}\label{qprop}
Given $\nu \in \text{supp}\, g(\nu)$, using $\Phi_f$ in \eqref{K}, we let
\begin{equation}\label{uf}
V[f]=V[f](\theta,\nu,t) = \nu + K\Phi_f (\theta,t) \,.
%V plays the same role as the average velocity of the classical Maxwellian of gas dynamics.
%= K \iint\limits_{\mathbb{R} \; \mathbb{R}} 
%f(\theta, w_{*},\nu_{*}, t) dw_{*} d\nu_{*} \,.
\end{equation}
Note that the (normalized) local Maxwellian of (\ref{per2}) is of the form
\begin{align}\label{Maxnew}
M_{V[f^{\e}]}(\theta, w,\nu, t) &= \frac{1}{\sqrt{2\pi}} \text{ exp}
\left(-\frac{1}{2} \left(w -  V[f^{\e}](\theta, \nu,t)\right)^2\right) \,.
%&= \frac{1}{\sqrt{2\pi}} \text{ exp}
%\left(-\frac{1}{2} \left(w - \nu - K \Phi_{f^{\e}}(\theta,t)\right)^2\right) \,.
\end{align}
%\iint\limits_{\mathbb{R} \; \mathbb{R}} 
%f^{\e}(\theta, w_{*},\nu_{*}, t) dw_{*} d\nu_{*}

Provided $V \in \mathbb{R}\,,$
%, we can express it as 
%$V = \nu + KP$. Note that $P_{\nu}$ is arbitrary, so we can choose such one 
%for $V$.  
we define the linear operator
\begin{equation}\label{Qcal}
 \mathcal{Q}_V (f)(w,\nu) := \partial_w \left(M_V(w) \, \partial_w 
\left(\frac{f(w,\nu)}{M_V(w)}\right)\right)\,.
\end{equation}

\begin{remark}\label{equildef}
Now, thanks to (\ref{per2}), we have $Q(f^{\e}) = \mathcal{O}(\e)$.  Letting $\e \to 0$ in \eqref{per2} induces 
$Q(f^0) = 0$.  Thus, $f^0$ is a so-called equilibrium, that is, a solution of $Q(f) = 0\,.$
\end{remark}

Since 
$Q$ only operates on the $(w, \nu)$ variables, we first ignore the phase-temporal dependence.  The following result then holds.

\begin{lemma}\label{qq}
 The operator $Q(f(w,\nu))$ can be written as
\begin{equation}\label{qre}
  Q(f) = \mathcal{Q}_{V[f]}(f)\,.
 \end{equation}
\end{lemma}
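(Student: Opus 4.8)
The plan is to unfold both sides of the claimed identity \eqref{qre} and verify that they agree as differential operators in $w$ (with $\nu$ a passive parameter). First I would recall from \eqref{Q} that $Q(f) = \partial_w(\mathcal{F}[f]f) + \partial_w^2 f$, where by \eqref{mathcalF} we have $\mathcal{F}[f] = w - \nu - K\Phi_f(\theta,t)$. The crucial observation is that, with the definition \eqref{uf} of $V[f] = \nu + K\Phi_f(\theta,t)$, the forcing term simplifies to $\mathcal{F}[f] = w - V[f]$. Thus $Q(f) = \partial_w\big((w - V[f])f\big) + \partial_w^2 f$, and since $V[f]$ does not depend on $w$, this is a clean drift-diffusion operator.

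Next I would expand the right-hand side $\mathcal{Q}_{V[f]}(f)$ from the definition \eqref{Qcal}. Writing $V = V[f]$ for brevity and using $\partial_w M_V(w) = -(w-V)M_V(w)$ (which follows directly from \eqref{equi}), the inner expression becomes
\begin{align*}
M_V \, \partial_w\!\left(\frac{f}{M_V}\right) &= M_V\left(\frac{\partial_w f}{M_V} - \frac{f \, \partial_w M_V}{M_V^2}\right) = \partial_w f - f \, \frac{\partial_w M_V}{M_V} = \partial_w f + (w - V) f\,.
\end{align*}
Applying $\partial_w$ to this gives $\mathcal{Q}_V(f) = \partial_w^2 f + \partial_w\big((w-V)f\big)$, which is exactly the expression obtained for $Q(f)$ above. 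This establishes \eqref{qre}.

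The computation is entirely routine; the only point requiring a little care is the logarithmic-derivative identity $\partial_w M_V / M_V = -(w-V)$ and the fact that $M_V$ is strictly positive everywhere (so division by $M_V$ is legitimate pointwise), which is guaranteed by the explicit Gaussian form \eqref{equi}. I would also remark that, since $Q$ acts only on the $(w,\nu)$ variables as noted before the statement, the phase–time variables $(\theta,t)$ enter only through $V[f](\theta,\nu,t)$ and play no role in the manipulation, so suppressing them is harmless. If I had to name a ``main obstacle,'' it is merely bookkeeping: making sure the sign conventions in $\partial_w M_V$ and in the quotient rule are tracked consistently so that the drift term $(w-V)f$ comes out with the correct sign matching $\mathcal{F}[f]f$.
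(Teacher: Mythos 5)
Your proposal is correct and follows essentially the same route as the paper's proof: both expand $\mathcal{Q}_{V[f]}(f)$ via the quotient rule using $\partial_w M_{V[f]} = -(w-V[f])M_{V[f]}$ to obtain $\partial_w\big(\partial_w f + (w-V[f])f\big)$, and identify this with $Q(f)$ through the observation $\mathcal{F}[f] = w - V[f]$. Your extra remarks on the strict positivity of $M_V$ and on the passivity of $(\theta,t)$ are sound but not needed beyond what the paper already records.
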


\begin{proof}
%K \iint\limits_{\mathbb{R} \; \mathbb{R}}  
%f (\theta, w_{*}\nu_{*}, t) d w_{*} d \nu_{*}
It is clear that
\begin{align*}
 \mathcal{Q}_{V[f]}(f) & = 
 \partial_w \left(M_{V[f]} \, \partial_w \left(\frac{f}{M_{V[f]}} 
 \right)\right)\\
 &=\partial_w \left(M_{V[f]} \,  \frac{(\partial_w f) \, M_{V[f]} + (w-V[f]) \, f \, M_{V[f]}}
{M^2_{V[f]}}\right)\\
 & = \partial_w ( \partial_w f + (w - V[f]) f)\\
 & = Q(f)\,.
 %& = \partial_w ((w- \nu -u[f])f) + \partial_w^2 f\,,
\end{align*}
%we have
%\begin{align*}
% Q(f) &= \partial_w\left(\left( w - \nu - \Phi_f \right)f\right) + \partial_w^2 (f)\\
% & = \partial_w(\partial_w f + (w - \nu - u[f])f)\,.
%\end{align*}
\end{proof}
%%%%%%%%%%%%%%%%

In the rest of this paper, we need the definition of Bochner space as follows.  Given a measure space $(Y, \Sigma, \mu)$ and a Banach space $(X, \|\cdot\|_X)\,,$ for every $1 \leq r < 
\infty\,,$ we use $L^r(Y;X)$ to represent the Bochner space \cite{evans,papa_bochner}
%5.9.2 spaces involving time
with the norms 
\[\|\phi\|_{L^r(Y;X)} := \left(\int_Y \|\phi(y) \|_{X}^r \, \dy\right)^{1/r} < + \infty\,, 
\]
\[ \|\phi\|_{L^{\infty}(Y;X)}: = \tu{ess sup}_{y \in Y} \|\phi(y)\|_{X}  < + \infty\,,\]
where $(X, \| \cdot \|_{X})$ is a Banach space, for example $X=H^1(\mathbb{R}) \,.$ 

Using the above definition of Bochner space, we now introduce the functional setting as in \cite{PD1}.  Let $f(w,\nu)$ and $h(w,\nu)$ be smooth functions of $(w,\nu)$ 
with fast decay when $w \to \pm \infty$.  The duality products are defined as follows:
\begin{align}\label{dualprod}
\begin{split}
&\langle f, h \rangle_{0,V} : = \iint\limits_{\mathbb{R} \; \mathbb{R}} 
f(w,\nu) \, h(w,\nu) \, \frac{1}{M_V (w)} \, g(\nu) \, \dw \, \dn \,,\\
&\langle f, h \rangle_{1,V} := \iint\limits_{\mathbb{R} \; \mathbb{R}} 
\partial_w\left(\frac{f(w,\nu)}{M_V(w)}\right) 
\partial_w \left(\frac{h(w,\nu)}{M_V(w)}\right) M_V (w) \, g(\nu) \, \dw \, \dn \,.
\end{split}
\end{align}

\noindent Then, $\langle f, h \rangle_{0,V}$ defines a duality (that is, a continuous bilinear form) between 
$f \in L^1(\mathbb{R}; L^2(\mathbb{R}))$ and $h \in L^{\infty}(\mathbb{R};L^2(\mathbb{R}))$.  
Similarly, $\langle f, h \rangle_{1,V}$ defines a duality between 
$f \in L^1(\mathbb{R}; H^1(\mathbb{R}))$ 
and $h \in L^{\infty}(\mathbb{R}; H^1(\mathbb{R}))$.  Applying the Green's formula to 
smooth functions, we have
\begin{equation}\label{d1}
 -\langle \mathcal{Q}_V (f), h \rangle_{0,V} = \langle f, h \rangle_{1,V}\,.
\end{equation}
Thus, for $f(w,\nu) \in L^{\infty}(\mathbb{R}; L^2(\mathbb{R}))$, we define $\mathcal{Q}_V (f)$ as 
a linear form on $L^{\infty}(\mathbb{R}; L^2(\mathbb{R}))$.  
Now, the collection of equilibria is described as follows.

\begin{definition}[\cite{PD1}]\label{m1}
 The set $\mathcal{E}$ of equilibria of $Q$ is given by
\begin{equation}\label{equilE}
\mathcal{E} = \{f(w,\nu) \in L^1(\mathbb{R}; H^1(\mathbb{R})) \: | \: f \geq 0 \text{ and } 
 \mathcal{Q}_{V[f]}(f) = 0\}\,.
 \end{equation}
\end{definition}

\vspace{12pt}

The following Lemma characterizes $\mathcal{E}$.
\begin{lemma}\label{m2}
 The set $\mathcal{E}$ of equilibria is the set of all functions of the form
\begin{equation}\label{m3}
  w \mapsto P_{\nu} \, M_V (w)\,,
 \end{equation}
 where the function $\nu \in \mathbb{R}
 \mapsto P_{\nu} \in \mathbb{R}_{+}$ is arbitrary in 
 %$V$ arbitrary, because $P_{\nu}$ arbitrary.
 the set $L^1(\mathbb{R})$ and meets the conditions (\ref{Pcond}) as well as (\ref{pt}), $\dd V= \nu + K\int\limits_{\mathbb{R}}P_{\nu_*}  \, g(\nu_*) \, \dn_*$ in $\mathbb{R}\,,$ and the normalization (\ref{normM}) of $M_V(w)$ is satisfied.
\end{lemma}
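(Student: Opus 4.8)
The plan is to establish the two inclusions separately. First I would verify that every function of the form $w \mapsto P_{\nu} M_V(w)$ with the stated properties is an equilibrium. For this, fix such a function $f(w,\nu) = P_{\nu} M_V(w)$. The key computation is that $V[f] = V$: indeed, by \eqref{K} and the normalization \eqref{normM},
\[
\Phi_f(\theta,t) = \iint\limits_{\mathbb{R}\;\mathbb{R}} P_{\nu_*} M_V(w_*) \, g(\nu_*) \, \dw_* \, \dn_* = \int\limits_{\mathbb{R}} P_{\nu_*} g(\nu_*) \, \dn_* = P\,,
\]
so that $V[f] = \nu + K\Phi_f = \nu + KP = V$ by \eqref{V}. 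Then $f/M_{V[f]} = P_{\nu}$ is independent of $w$, hence $\partial_w(f/M_{V[f]}) = 0$, and therefore $\mathcal{Q}_{V[f]}(f) = \partial_w(M_{V[f]} \cdot 0) = 0$. Together with $f \geq 0$ and the membership $f \in L^1(\mathbb{R}; H^1(\mathbb{R}))$ (which follows from $P_\nu \in L^1(\mathbb{R})$ and $M_V \in H^1(\mathbb{R})$ since $M_V$ and $M_V'$ decay like a Gaussian), this shows $f \in \mathcal{E}$.

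For the reverse inclusion, take $f \in \mathcal{E}$, so $\mathcal{Q}_{V[f]}(f) = 0$ with $f \geq 0$ and $f \in L^1(\mathbb{R}; H^1(\mathbb{R}))$. Write $V = V[f]$ for brevity (at this stage a fixed function of $(\theta,\nu,t)$ determined by $f$). Using the duality pairing identity \eqref{d1}, test $\mathcal{Q}_V(f) = 0$ against $f$ itself:
\[
0 = -\langle \mathcal{Q}_V(f), f \rangle_{0,V} = \langle f, f \rangle_{1,V} = \iint\limits_{\mathbb{R}\;\mathbb{R}} \left( \partial_w\!\left( \frac{f(w,\nu)}{M_V(w)} \right) \right)^2 M_V(w) \, g(\nu) \, \dw \, \dn\,.
\]
Since the integrand is nonnegative and $g(\nu), M_V(w) > 0$, this forces $\partial_w(f(w,\nu)/M_V(w)) = 0$ for a.e. $(w,\nu)$, hence $f(w,\nu)/M_V(w) = P_\nu$ for some function $P_\nu$ depending only on $\nu$; that is, $f(w,\nu) = P_\nu M_V(w)$. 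From $f \geq 0$ and $M_V > 0$ we get $P_\nu \geq 0$. The membership $f \in L^1(\mathbb{R}; H^1(\mathbb{R}))$ together with $\|M_V(\cdot)\|_{H^1(\mathbb{R})}$ being a fixed positive constant (independent of $\nu$ up to the shift, or bounded uniformly) gives $\int_{\mathbb{R}} |P_\nu| \, \dn < \infty$, i.e. $P_\nu \in L^1(\mathbb{R})$. Finally one must check the consistency relations: imposing the normalization \eqref{Pcond} on $P_\nu$ and \eqref{normM} on $M_V$ is a choice of representative within the equilibrium (scaling $w$-integral and $\theta$-integral), and the self-consistency $V = \nu + K\int P_{\nu_*} g(\nu_*)\,\dn_* = \nu + KP$ holds by exactly the computation $\Phi_f = P$ done above; the assumption \eqref{pt} on the first moment is carried over as a standing hypothesis on the class of admissible $P_\nu$.

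\textbf{Main obstacle.} The routine part is the two algebraic computations ($\mathcal{Q}_V$ annihilates $P_\nu M_V$, and conversely $\langle f,f\rangle_{1,V}=0$ implies $f/M_V$ is $w$-independent). The delicate point I expect to need care is the rigorous justification of the pairing identity \eqref{d1} applied to $h = f$ when $f$ is merely in $L^1(\mathbb{R}; H^1(\mathbb{R}))$ rather than smooth with fast decay: the Green's formula was only asserted for smooth rapidly-decaying functions, so one needs a density/approximation argument, and in particular needs the boundary terms at $w \to \pm\infty$ to vanish. This is precisely where the Hardy-type inequality of Theorem \ref{hardylem1} enters — it controls $\int (f/M_V)^2 M_V$ by $\int (\partial_w(f/M_V))^2 M_V = \langle f,f\rangle_{1,V}$, ensuring the relevant integrals are finite and the pairing is well-defined on the equilibrium class — so the argument must invoke that inequality (with $u = f/M_V$, legitimate since $f = \varphi$ shifted lies in the space $W_0$-type class and $u \in AC_{\tu{loc}}(\mathbb{R})$) to close the estimate cleanly. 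A secondary point is verifying the uniform-in-$\nu$ bound on $\|M_V\|_{H^1(\mathbb{R})}$ needed to pass between $f \in L^1(\mathbb{R};H^1(\mathbb{R}))$ and $P_\nu \in L^1(\mathbb{R})$, which follows from the translation structure $M_V(w) = M_0(w-V)$ but should be noted explicitly.
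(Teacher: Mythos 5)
Your proposal is correct and follows essentially the same route as the paper: one direction tests $\mathcal{Q}_{V[f]}(f)=0$ against $f$ via the pairing \eqref{d1} to force $\partial_w(f/M_{V[f]})=0$, and the other verifies $V[P_\nu M_V]=V$ through $\Phi_f=P$ and then checks that $Q$ annihilates $P_\nu M_V$. Your observation that the vanishing of $\langle f,f\rangle_{1,V}$ should be argued from the strict positivity of $M_V$ (rather than the paper's claim that $M_{V[f]}$ is ``bounded from below and above,'' which a Gaussian on all of $\mathbb{R}$ is not) is in fact a slight improvement in precision, as is your explicit flagging of the density argument needed to apply \eqref{d1} beyond smooth rapidly decaying functions.
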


\begin{proof}
First, suppose that $f(w,\nu)$ is an equilibrium, that is, $f \in \mathcal{E}$ in the Definition \ref{m1}.  Then, with $M_{V[f]}$ defined in \eqref{Maxnew}, 
 %using integrating by parts,
 we obtain from (\ref{d1}) that 
 \begin{align*}
  0 &= - \langle \mathcal{Q}_{V[f]} (f), f \rangle_{0, V[f]} \\
  &= - \iint\limits_{\mathbb{R} \; \mathbb{R}}\left(\partial_w\left(M_{V[f]} \,
\partial_w\left(\frac{f}{M_{V[f]}}\right)\right)\right) \, f \,  \frac{1}{M_{V[f]}} \, g(\nu) \, \dw \, \dn\\
  & = \iint\limits_{\mathbb{R} \; \mathbb{R}} M_{V[f]} \, 
\partial_w\left(\frac{f}{M_{V[f]}}\right) \partial_w 
  \left(\frac{f}{M_{V[f]}}\right)   g(\nu) \, \dw \, \dn \\
&=\langle f, f \rangle_{1, V[f]}\,.
\end{align*}

Since $M_{V[f]}$ is bounded from below and above, it follows that 
\[\partial_w\left(\frac{f(w, \nu)}{M_{V[f]}}\right) = 0\,,\]
that is, there exists $P_{\nu} \in \mathbb{R}$, independent of $w$, such that 
$f(w, \nu) = P_{\nu} M_{V[f]}(w)\,.$ In addition, as $f \in L^1(\mathbb{R}; 
H^1(\mathbb{R}))$ and $f \geq 0$, it follows that $P_{\nu} \geq 0\,,$ and that the function 
$\nu \in \mathbb{R} \mapsto P_{\nu} \in \mathbb{R}_{+}$ belongs to $L^1(\mathbb{R})$.  
Thus, $f(w,\nu)$ is of the form (\ref{m3}).

Conversely, suppose that $\rho(w,\nu)=P_{\nu} \, M_V (w)$ is of the form (\ref{m3}) with 
$P_{\nu} \in \mathbb{R}_{+}$ being as regular as in the 
Lemma's hypothesis.  We first need to show that $V[\rho] \text{ } (= V[P_{\nu} M_V]) = V\,.$ Indeed, thanks to (\ref{uf}), \eqref{K}, \eqref{normM}, \eqref{P}, and (\ref{V}), 
respectively, we have
\begin{align}\label{Vequil}
\begin{split}
V[P_{\nu} M_V]&= \nu + 
K \, \Phi_{P_{\nu} M_V} (\theta,t)\\
&=  \nu + K \iint\limits_{\mathbb{R} \; \mathbb{R}} M_V \, P_{\nu_*} \, g(\nu_*) \, \dw \, \dn_*\\
&=  \nu + K  \int\limits_{\mathbb{R}} \left(\int\limits_{\mathbb{R}} M_V \,  \dw \right) 
P_{\nu_*} \, g(\nu_*) \, \dn_* \\
 &=  \nu + K   \int\limits_{\mathbb{R}}  P_{\nu_*} \, g(\nu_*) \, \dn_*\\
 & = \nu + K \, P\\
 &= V\,.
 \end{split}
\end{align}
%which is as arbitrary as $P_{\nu}$ is. 
%Therefore, Lemma  holds.
By \eqref{Qcheck} below, $Q(\rho(w,\nu)) = Q(P_{\nu} \, M_V(w))= 0\,.$ Thus, $\rho(w,\nu) = P_{\nu} \, M_V (w)$ is obviously an equilibrium by %\eqref{dmaxw}, %\eqref{qrho}, 
\eqref{qre} and \eqref{equilE} of Definition \ref{m1}.  
%or Remark \ref{equildef}
\end{proof}
%%%%%%%%%%%%%%%%%%%%%%

\vspace{10pt}
From Lemma \ref{m2} and the fact that $f^0$ is an equilibrium by Remark \ref{equildef}, we derive 
\[f^0(\theta,w, \nu,t) = P_{\nu}(\theta, t) M_{V(\theta,\nu,t)}(w)\,,\]
as in (\ref{hl}).  
%Indeed, for an arbitrary $V=V(\theta,\nu,t)$ in $\mathbb{R}$, we can choose 
%$P_{\nu}(\theta,t) \in \mathbb{R}$ satisfying (\ref{normM}) and 
%the hypotheses in Theorem \ref{thm} (that is for any $(\theta,t)$, the function $\nu \in \mathbb{R} \mapsto 
%P_{\nu}(\theta,t) \in \mathbb{R}_{+}$ belongs to $L^1(\mathbb{R})$) such that 
%\[V(\theta,\nu,t) = \nu + K \int_{\mathbb{R}} P_{\nu_*}  \, d\nu_* 
%= \nu + KP\,.\] 
%
%where
%as $\partial_w$ so $\nu$ does not matter.
%$P_{\nu}$ is an arbitrary function of $(\theta,t)$.
%%%%%%%%%%%%%%
Using the expression \eqref{Q}, with \eqref{mathcalF}, \eqref{Vequil}, and \eqref{V}, it can be checked that $Q(f^0) = 0$ as follows:
\begin{align}\label{Qcheck}
\begin{split}
 Q(f^0) & = \partial_w \left(\mathcal{F}[f^0] \, f^0 \right) + (\partial_w^2 f^0)\\
 &= \partial_w \left(\mathcal{F}[P_{\nu}  M_V] \, P_{\nu} M_V\right) + 
 (\partial_w^2 (P_{\nu} M_V)) \\
& = \partial_w\left(\left( w - V \right)
P_{\nu} M_V\right) + 
 \partial_w^2 (P_{\nu} M_V)\\
 %&= \partial_w\left(\left( w - \nu -
 %K \int\limits_{\mathbb{R}} \left(\int\limits_{\mathbb{R}} M dw\right) 
 %P(\theta, \nu_{*}, t) d \nu_{*} \right)P_{\nu} M_u\right) + 
 %\partial_w^2 (P_{\nu} M_u)\\
 &= (P_{\nu} M_V + P_{\nu}(w -V) \, \partial_w M_V) + 
 (\partial_w ( -P_{\nu} M_V(w -V)))\\
 &= (P_{\nu} M_V - (w - V)^2 P_{\nu} M_V ) + 
 (-P_{\nu} M_V + (w -V)^2 P_{\nu} M_V) = 0\,.
 \end{split}
\end{align}
Now, because $Q$ acts on the $(w, \nu)$ variables only, $P_{\nu} = P_{\nu} (\theta,t)$ and $V = V(\theta,\nu,t)$ are a priori 
arbitrary functions of $(\theta,t)\,,$ except in (\ref{Pcond}) that $\dd \int\limits_{\mathbb{T}} P_{\nu}(\theta,t) \, \dth = 1\,.$   
Thus, the fact from \eqref{Qcheck} that $Q(f^0(\theta,w, \nu,t)) = 0$ does not impose any condition on the dependence 
of $f^0$ on $(\theta,t)$.  In order to determine how $P_{\nu} (\theta,t)$ and $V(\theta,\nu,t)$ depend 
on $(\theta,t)$, we need the next Subsection as in \cite{PD1}.

%%%%%%%thatthat
\subsection{Generalized Collision Invariants (GCI)}\label{gci0}

First, we should get to know the notion of Collision Invariant as follows.
\begin{definition}\label{ci}
 A collision invariant (CI) is a function $\psi(w,\nu) \in L^{\infty}
 (\mathbb{R}; H^1(\mathbb{R}))$ such that for all functions $f(w,\nu) \in L^1(\mathbb{R}; 
 H^1(\mathbb{R}))$, we have
 \begin{equation}\label{cidf}
  0= - \iint\limits_{\mathbb{R} \; \mathbb{R}} Q(f) \, \psi  \, g(\nu) \, \dw \, \dn 
  : =
  \langle \psi M_{V[f]},f \rangle_{1,V[f]} \,.
 \end{equation}
 These Collision Invariants form a set $\mathcal{C}$, as a 
 vector space.
\end{definition}

Note that the relation (\ref{cidf}) is obtained via Lemma \ref{qq} and relation (\ref{d1}), 
respectively:
\begin{align*}
 - \iint\limits_{\mathbb{R} \; \mathbb{R}} Q(f) \, \psi \,  g(\nu) \, \dw \, \dn 
 & = - \iint\limits_{\mathbb{R} \; \mathbb{R}} (\mathcal{Q}_{V[f]}(f)) 
 (\psi M_{V[f]}) \, \frac{1}{M_{V[f]}} \, g(\nu) \, \dw \, \dn \\
 & = - \langle \mathcal{Q}_{V[f]} (f) , \psi M_{V[f]}\rangle_{0,V[f]}\\
 %& = - \iint\limits_{\mathbb{R} \; \mathbb{R}} 
 %\partial_w \left(M_{u[f]} \partial_w\left(\frac{f}{M_{u[f]}}\right) \right)
 %\left(\frac{\psi M_{u[f]}}{M_{u[f]}}\right) dw d \nu \\
 %& = \iint\limits_{\mathbb{R} \; \mathbb{R}} 
 % M_{u[f]} \partial_w\left(\frac{f}{M_{u[f]}}\right) 
 %\partial_w \left(\frac{\psi M_{u[f]}}{M_{u[f]}}\right) dw d \nu \\
 & = \langle \psi M_{V[f]},f \rangle_{1,V[f]}\,.
\end{align*}

We then deduce the following result.
\begin{proposition}[\cite{PD1}]
 Any function $\phi: \nu \in \mathbb{R} \mapsto \phi(\nu) \in \mathbb{R}$ 
 belonging to $L^{\infty}(\mathbb{R})$ is a CI.
\end{proposition}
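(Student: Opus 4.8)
The plan is to verify directly that any $\phi \in L^\infty(\mathbb{R})$ depending only on $\nu$ satisfies the defining identity \eqref{cidf} of a collision invariant, namely that $\langle \phi M_{V[f]}, f\rangle_{1,V[f]} = 0$ for every admissible $f(w,\nu) \in L^1(\mathbb{R}; H^1(\mathbb{R}))$. First I would substitute $\psi = \phi(\nu)$ into the second duality bracket in \eqref{dualprod}, obtaining
\[
\langle \phi M_{V[f]}, f\rangle_{1,V[f]} = \iint\limits_{\mathbb{R}\;\mathbb{R}} \partial_w\!\left(\frac{\phi(\nu) M_{V[f]}(w)}{M_{V[f]}(w)}\right) \partial_w\!\left(\frac{f(w,\nu)}{M_{V[f]}(w)}\right) M_{V[f]}(w) \, g(\nu) \, \dw \, \dn \,.
\]
The key observation is that $\phi(\nu) M_{V[f]}(w) / M_{V[f]}(w) = \phi(\nu)$ is independent of $w$, so $\partial_w(\phi(\nu)) = 0$ pointwise. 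Hence the entire integrand vanishes, and the double integral is zero regardless of $f$.

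A cleaner and perhaps more transparent route, which I would present as the main argument, is to use the other form of \eqref{cidf}: $0 = -\iint Q(f)\,\psi\, g(\nu)\,\dw\,\dn$. Taking $\psi = \phi(\nu)$ and recalling $Q(f) = \partial_w(\mathcal{F}[f] f) + \partial_w^2 f$ from \eqref{Q}, one sees that $Q(f)$ is a total $w$-derivative: $Q(f) = \partial_w\big(\mathcal{F}[f] f + \partial_w f\big)$. Therefore
\[
-\iint\limits_{\mathbb{R}\;\mathbb{R}} Q(f)\, \phi(\nu)\, g(\nu)\, \dw\, \dn = -\int\limits_{\mathbb{R}} \phi(\nu)\, g(\nu) \left(\int\limits_{\mathbb{R}} \partial_w\big(\mathcal{F}[f] f + \partial_w f\big)\, \dw\right) \dn = 0\,,
\]
since the inner integral is the integral of a $w$-derivative of a function with fast decay as $w \to \pm\infty$ (built into the functional setting, since $f(w,\nu) \in H^1(\mathbb{R})$ in $w$ decays and $M_{V[f]}$ has Gaussian decay), and hence equals zero by the fundamental theorem of calculus. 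The outer integral over $\nu$ converges because $\phi \in L^\infty(\mathbb{R})$ and $g \in L^1(\mathbb{R})$ (it is a probability density), so $\phi(\nu) g(\nu) \in L^1(\mathbb{R})$, making the whole expression well-defined and equal to $0$.

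Finally I would note that $\phi(\nu)$, viewed as a function of $(w,\nu)$ constant in $w$, indeed belongs to $L^\infty(\mathbb{R}; H^1(\mathbb{R}))$: for fixed $\nu$ it is a constant function of $w$ with zero derivative, but $H^1(\mathbb{R})$ on the unbounded line does not contain nonzero constants — so strictly one should interpret the pairing in the sense of the duality on the relevant weighted spaces, as the paper sets up after \eqref{dualprod}, where the bracket $\langle f, h\rangle_{1,V}$ only requires $\partial_w(h/M_V) \in L^2(M_V\,g)$, and for $h = \phi(\nu) M_V$ this derivative is identically zero. I would phrase the conclusion accordingly: the identity \eqref{cidf} holds trivially because $\partial_w(\phi(\nu) M_{V[f]}/M_{V[f]}) = \partial_w \phi(\nu) = 0$, so $\phi \in \mathcal{C}$.

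The main (and only real) subtlety is the functional-analytic bookkeeping — making sure the pairing is interpreted in the weighted-space sense introduced right after \eqref{dualprod} rather than literally requiring $\phi(\nu) \in H^1(\mathbb{R})$ in $w$ — together with justifying the vanishing of $\int_{\mathbb{R}} \partial_w(\cdots)\,\dw$ from the decay assumptions; the algebraic content is essentially immediate.
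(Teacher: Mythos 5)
Your first argument is exactly the paper's proof: the paper observes that $\phi M_{V[f]}\in L^{\infty}(\mathbb{R};H^1(\mathbb{R}))$ because $M_{V[f]}$ is bounded (in fact Gaussian in $w$), and that since $\phi$ does not depend on $w$ the integrand of $\langle \phi M_{V[f]},f\rangle_{1,V[f]}$ contains the factor $\partial_w\phi(\nu)=0$, so \eqref{cidf} holds trivially. Your alternative route via $Q(f)$ being a total $w$-derivative and your remark about interpreting the membership condition through $\phi M_{V[f]}$ rather than $\phi$ itself are consistent with, and somewhat more careful than, the paper's two-line argument, but the essential content is the same.
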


\begin{proof}
 Let $\phi \in L^{\infty}(\mathbb{R})$ and $f(w,\nu) \in L^1(\mathbb{R}; H^1(\mathbb{R}))$.
 Since $M_{V[f]}$ is bounded, it readily follows that $\phi M_{V[f]} \in 
 L^{\infty}(\mathbb{R}; H^1(\mathbb{R}))$.  Now, as $\phi$ does not depend on $w$, it 
 satisfies (\ref{cidf}).
\end{proof}
%%%%%%%%%%%%%%%%%%%%%%%%

Later, we will see that this set of CI does not suffice to provide the phase-temporal 
evolution of $P_{\nu}$ and $V$ in the hydrodynamic limit.  
While other apparent Collision Invariants are also absent, we benefit from a weaker concept, 
that of ``Generalized Collision Invariant'' (GCI) from \cite{gci1, hlk2, PD1} below. 
%The rationale for
%introducing this concept is discussed in details in \cite{gci1, hlk2}.

\begin{definition}\label{gci1}
 
 %For each 
%$\nu \in g(\nu)$, there exists a function $P_{\nu} (\theta,t) (>0)$ satisfying 
%(\ref{Pcond}), such that 
%\[\int_{\mathbb{R}} P_{\nu} (\theta,t) d\nu = P, \quad \text{i.e. } 
%\iint\limits_{\mathbb{R} \; \mathbb{R}} M_V P_{\nu} dw d\nu = P\,.\]
 
%Let $P_{\nu}$ and $V >0$ (the case $V=0$ is trivial) be as in Lemma \ref{m2}.  
Let $V \in \mathbb{R}$ be given.  A Generalized Collision Invariant (GCI) associated to $V$ is 
 a function $\psi(w,\nu) \in L^{\infty}(\mathbb{R}; H^1(\mathbb{R}))$ such that the following 
 property holds:  For all functions $f(w, \nu)$ satisfying $f \in L^1(\mathbb{R}; H^1(\mathbb{R}))$ as well as  
 %$f - P_{\nu}M_V \geq 0$, 
\begin{equation}\label{hypo}
 V[f] =V \quad \text{and} \quad \iint\limits_{\mathbb{R} \; \mathbb{R}} (w - V) \, f  \, g(\nu) \, \dw \, \dn = 0 \,,
 %\int\limits_{\mathbb{R}} f dw = \int\limits_{\mathbb{R}} P_{\nu} M_V dw = P_{\nu} \quad \text{a.e. } \nu\,,
 \end{equation}
 %(\ref{Pcond}) normlization of P
 %$P_{\nu}(\theta, t)$ is defined in the von Mises distribution (\ref{equi}), 
we obtain
\begin{equation}\label{gci2}
 %= - \iint\limits_{\mathbb{R} \; \mathbb{R}} Q(f) \psi dw d\nu\\
  0 = - \iint\limits_{\mathbb{R} \; \mathbb{R}} Q(f) \, \psi  \, g(\nu) \, \dw \, \dn 
  = - \iint\limits_{\mathbb{R} \; \mathbb{R}} \mathcal{Q}_V(f) \, \psi \, g(\nu) \, \dw \, \dn \\
  := \langle \psi M_V,f\rangle_{1,V} \,.
  %If $V = V[f]$, then  $\mathcal{Q}_V = \mathcal{Q}_{V[f]} =\mathcal{Q}$ by remark.
 \end{equation}
 These Generalized Collision Invariants form a set $\mathcal{G}_V$, as a vector space.
\end{definition}

Similarly to the case involving the CI, if $\psi(w,\nu)$ is in 
$L^{\infty}(\mathbb{R}; H^1(\mathbb{R}))$, 
then $\psi M_V(w)$ is too, and (\ref{gci2}) is well-defined.  In order to identify $\mathcal{G}_V$, 
we introduce a proper functional setting for functions of $w$ only \cite{PD1}, 
as follows.  

Consider the space $W_0$ specified in \eqref{w0}. 
 Given $V \in \mathbb{R}\,,$ for $f\in W_0\,,$ we define the following norms or semi-norms on $L^2(\mathbb{R})$ 
and $H^1(\mathbb{R})$:
\begin{align}\label{seminorm}
|f|_{0,V}^2 : = \int\limits_{\mathbb{R}} |f(w)|^2 \, \frac{1}{M_V(w)} \, \dw\,, \quad
|f|^2_{1,V} : = \int\limits_{\mathbb{R}} \left| \partial_w \left(
\frac{f(w)}{M_V(w)}\right) \right|^2 \, M_V(w) \, \dw \,.
\end{align}

\noindent These two semi-norms are respectively equivalent to the classical $L^2$ norm and $H^1$ 
semi-norm on $L^2(\mathbb{R})$ and $H^1(\mathbb{R})$.  Using the Hardy-type inequality \eqref{hardyineq}, we have 
%(or weighted Friedrichs-Poincar\'{e} inequality, or weighted Sobolev inequality), 
%\cite{poincareun1}
%not Poincar\'{e}-Wirtinger as no |\Omega|
\begin{equation}\label{pieq}
 |\varphi|^2_{0,V} \leq  C |\varphi|^2_{1,V} \,, \quad \forall \varphi \in W_0\,, 
\end{equation}
where $C$ is a positive constant.  The associated bilinear forms are denoted by 
\begin{align}\label{biforms}
\begin{split}
&(f, h)_{0,V} : = \int\limits_{\mathbb{R} \; \mathbb{R}} 
f(w) \, h(w) \, \frac{1}{M_V (w)} \, \dw  \,,\\
&(f, h)_{1,V} := \int\limits_{\mathbb{R} \; \mathbb{R}} 
\partial_w\left(\frac{f(w)}{M_V(w)}\right) 
\partial_w \left(\frac{h(w)}{M_V(w)}\right) M_V (w) \, \dw  \,.
\end{split}
\end{align}
Note that in these forms, the functions $f$ and $h$ are over $w \in \mathbb{R}\,.$  Whereas, in the duality products \eqref{dualprod}, the functions $f$ and $h$ are over $(w,\nu) \in \mathbb{R} \times \mathbb{R}\,.$

%%%
\bigskip
We are now ready to characterize $\mathcal{G}_V$ in the following Proposition, 
as in \cite{PD1}.
%%%

\begin{proposition}\label{gcis}
 %Let $P_{\nu}$ and $V >0$ be as in Lemma \ref{m2}.  
 The set of GCI is of the form
\begin{equation}\label{gcis2}
\mathcal{G}_V = \{\psi(w,\nu) =\beta \, \chi_V(w) + \phi(\nu), \quad \beta \in \mathbb{R}, 
  \quad \phi(\nu) \in L^{\infty}(\mathbb{R})\}\,,
 \end{equation}
 where $\varphi_V (w) = \chi_V(w) \, M_V(w)$ is the unique solution in $W_0$ of the following variational problem:
\begin{equation}\label{vf}
 \text{Find } \varphi(w) \in W_0 \text{ such that } (\varphi,f)_{1,V} = 
 ((w-V) M_V, f)_{0,V}\,,
 %(f-P_{\nu}M_V, f)_{0,V}\,, 
 \quad \forall f(w) \in H^1(\mathbb{R}) \,.
 %\text{ and } \int\limits_{\mathbb{R}} f dw = P_{\nu}\,.
 \end{equation}
\end{proposition}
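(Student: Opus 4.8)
The plan is to prove both inclusions of~\eqref{gcis2}, after first recasting what it means to be a GCI. Everything is viewed as acting on functions of $(w,\nu)$ only (the phase-time variables being frozen), and I use the bilinear forms of~\eqref{dualprod}. By Lemma~\ref{qq} together with the Green-type identity~\eqref{d1}, a $\psi\in L^{\infty}(\mathbb{R};H^1(\mathbb{R}))$ is a GCI associated to $V$ precisely when $\langle\psi M_V,f\rangle_{1,V}=0$ for all $f\in L^1(\mathbb{R};H^1(\mathbb{R}))$ satisfying~\eqref{hypo}. The relation $V[f]=V$ pins the scalar $\iint f\,g\,\dw\,\dn$ to a prescribed value; subtracting a particular solution $f_0(w,\nu)=c(\nu)M_V(w)$ with the right value of $\iint f_0\,g\,\dw\,\dn$ (it also satisfies $\iint(w-V)f_0\,g\,\dw\,\dn=0$ by~\eqref{m0} and contributes nothing to $\langle\psi M_V,\cdot\rangle_{1,V}$ since $\partial_w(f_0/M_V)=0$) reduces the GCI condition to: $\langle\psi M_V,h\rangle_{1,V}=0$ for every $h$ in the linear space $\mathcal{K}_0=\{h:\langle M_V,h\rangle_{0,V}=0,\ \langle(w-V)M_V,h\rangle_{0,V}=0\}$, where one uses $\iint h\,g\,\dw\,\dn=\langle M_V,h\rangle_{0,V}$ and $\iint(w-V)h\,g\,\dw\,\dn=\langle(w-V)M_V,h\rangle_{0,V}$.

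Next I would integrate by parts in $w$ (boundary terms vanishing by the Gaussian decay of $M_V$, after passing to $C_c^{\infty}$ test functions by density), turning this into $\langle\partial_w(M_V\,\partial_w\psi),h\rangle_{0,V}=0$ for all $h\in\mathcal{K}_0$; that is, $\partial_w(M_V\partial_w\psi)$ is $\langle\cdot,\cdot\rangle_{0,V}$-orthogonal to the intersection of the kernels of the two continuous functionals $\langle M_V,\cdot\rangle_{0,V}$ and $\langle(w-V)M_V,\cdot\rangle_{0,V}$. A finite-codimension/duality argument then yields real constants $a,b$ with $\partial_w(M_V\partial_w\psi)=aM_V+b(w-V)M_V$. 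To interpret the $b$-term I would invoke the variational problem~\eqref{vf}: its well-posedness is where Theorem~\ref{hardylem1} enters — coercivity of $(\cdot,\cdot)_{1,V}$ on $W_0$ is exactly the Poincaré-type inequality~\eqref{pieq} deduced from~\eqref{hardyineq} (positivity being clear since $|\varphi|_{1,V}=0$ forces $\varphi=cM_V$, and then $\int_{\mathbb{R}}\varphi\,\dw=0$ forces $c=0$; continuity of the form and of the right-hand functional follows from $(w-V)\sqrt{M_V}\in L^2(\mathbb{R})$ and~\eqref{pieq}), so Lax--Milgram gives the unique $\varphi_V\in W_0$, with strong form $\partial_w(M_V\partial_w\chi_V)=-(w-V)M_V$ where $\chi_V=\varphi_V/M_V$.

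Substituting gives $\partial_w\bigl(M_V\,\partial_w(\psi+b\chi_V)\bigr)=aM_V$. Setting $\eta=\psi+b\chi_V$ and integrating over $\mathbb{R}$: the left side integrates to $[M_V\partial_w\eta]_{-\infty}^{+\infty}=0$ (the bracketed quantity lying in $W^{1,1}(\mathbb{R})$, hence vanishing at $\pm\infty$), while the right side is $a\int_{\mathbb{R}}M_V\,\dw=a$ by~\eqref{normM}; hence $a=0$. Then $M_V\partial_w\eta$ is independent of $w$, and since $\int_{\mathbb{R}}M_V(w)^{-1}\,\dw=+\infty$ this constant must be $0$, so $\eta=\eta(\nu)=:\phi(\nu)$ and $\psi=\beta\chi_V(w)+\phi(\nu)$ with $\beta=-b$; moreover $\phi\in L^{\infty}(\mathbb{R})$ because $\psi\in L^{\infty}(\mathbb{R};H^1(\mathbb{R}))$ and $\chi_V M_V=\varphi_V\in W_0$. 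The converse is the easy direction: for $\psi=\beta\chi_V+\phi(\nu)$ one has $\partial_w\psi=\beta\,\partial_w\chi_V$, so $\langle\psi M_V,f\rangle_{1,V}=\beta\iint\partial_w\chi_V\,\partial_w(f/M_V)\,M_V\,g\,\dw\,\dn=\beta\iint(w-V)f\,g\,\dw\,\dn$ by~\eqref{vf}, which vanishes under~\eqref{hypo}; and $\psi M_V=\beta\varphi_V+\phi(\nu)M_V\in L^{\infty}(\mathbb{R};H^1(\mathbb{R}))$.

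The delicate point is the middle step: passing rigorously from "$\partial_w(M_V\partial_w\psi)$ annihilates $\mathcal{K}_0$" to the two-term representation with genuine scalar constants, at the regularity actually available — this needs the integration by parts to be legitimate, $\partial_w(M_V\partial_w\psi)$ to lie in the dual space on which $\langle M_V,\cdot\rangle_{0,V}$ and $\langle(w-V)M_V,\cdot\rangle_{0,V}$ act, and care in separating genuine constants from $\nu$-dependent quantities. By contrast, establishing that $\varphi_V$ exists and is unique is routine once~\eqref{pieq} is in hand, which is exactly why the Hardy-type inequality of Theorem~\ref{hardylem1} was proved beforehand.
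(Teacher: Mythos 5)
Your proposal is correct and rests on the same skeleton as the paper's proof --- Lax--Milgram plus the Hardy-type inequality \eqref{pieq} for the well-posedness of \eqref{vf}, reformulation of the GCI property as an orthogonality statement in the $\langle\cdot,\cdot\rangle_{1,V}$ pairing, a representation of the resulting functional, and a direct computation for the converse inclusion --- but it diverges in two instructive ways. First, you retain the constraint $V[f]=V$ from \eqref{hypo} and linearize it by subtracting the particular solution $c(\nu)M_V(w)$, which produces a \emph{two}-functional orthogonality and hence the representation $\partial_w(M_V\partial_w\psi)=aM_V+b(w-V)M_V$ with an extra constant $a$ that you must (and do) kill by integrating over $\mathbb{R}$ and invoking \eqref{normM}. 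The paper instead declares \eqref{hypo} ``equivalent to'' the single condition $\langle (w-V)M_V,f\rangle_{0,V}=0$, silently discarding the affine constraint, and so arrives directly at the one-parameter identity \eqref{vp2}; your treatment is more faithful to Definition \ref{gci1}, at the cost of one additional step. Second, to pin down the $w$-dependence you pass to the strong (distributional) form and argue that $M_V\partial_w\eta$ is a $w$-constant lying in $L^1(\mathbb{R})$, hence zero; the paper stays variational and instead tests the difference of two solutions against $f=\psi M_V\zeta_a$ with $\zeta_a$ the indicator of $[-a,a]$ to conclude $\partial_w\psi=0$. Both routes are sound. Your write-up inherits the same unresolved regularity issues as the paper (continuity of the relevant functionals on $L^1(\mathbb{R};H^1(\mathbb{R}))$ needed for the ``typical functional analytical argument,'' and the fact that $\chi_V(w)=w-V$ and nonzero constants do not literally lie in $H^1(\mathbb{R})$), but you flag the genuinely delicate step explicitly, which the paper does not.
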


\begin{proof}
 The existence of a unique solution $\varphi_V(w) \in W_0$ for the variational problem 
 (\ref{vf}) is based on the Lax--Milgram theorem and the Hardy-type inequality (\ref{pieq}) acting as the Poincar\'{e} inequality; 
 see \cite{PD1,hlk2,expand1}, for more details.

Now, given $\psi(w,\nu) \in \mathcal{G}_V$ in Definition \ref{gci1}, and 
$f(w,\nu)  \in L^1(\mathbb{R}; H^1(\mathbb{R}))\,.$  We note first that from Definition \ref{gci1}, using the duality products \eqref{dualprod}, the condition (\ref{hypo}) is equivalent to 
\[\left\langle (w-V)M_V, f \right\rangle_{0,V} = 0\,.\]
Then, by (\ref{gci2}), 
$\psi(w,\nu)$ is a GCI if and only if $\psi(w,\nu) \in L^{\infty}(\mathbb{R}; H^1(\mathbb{R}))$ and 
the following implication holds:  for all $f(w,\nu) \in L^1(\mathbb{R}; H^1(\mathbb{R}))$,
\[\left\langle (w-V)M_V, f \right\rangle_{0,V} = 0 
\implies \langle \psi M_V, f\rangle_{1,V} = 0\,.\]

It means that (by a typical functional analytical argument) there exists 
a real number $\beta$ such that
\begin{equation}\label{vp2}
 \langle \psi M_V, f\rangle_{1,V}= \beta 
 \left\langle (w-V)M_V, f \right\rangle_{0,V}  \,, 
 \quad \forall f(w,\nu) \in L^1(\mathbb{R}; H^1(\mathbb{R}))\,.
\end{equation}

\noindent Thus, $\psi(w,\nu)$ is the solution of an elliptic variational problem.

Provided the unique $\chi_V (w)$ found from (\ref{vf}) and for any
 $\phi(\nu) \in L^{\infty}(\mathbb{R})$, we notice that the function $\psi(w,\nu)$ with 
 $(w,\nu) \mapsto \beta\chi_V (w) + \phi(\nu)$ 
 belongs to $L^{\infty}(\mathbb{R}; H^1(\mathbb{R}))$ 
and satisfies the variational problem (\ref{vp2}).  This function 
$\psi(w,\nu) = \beta \chi_V (w) + \phi(\nu)$ is unique.  Indeed, 
by linearity, the difference $\psi(w,\nu)$ of two such solutions ($\psi_1(w,\nu)$ and $\psi_2(w,\nu)$) lies in 
$L^{\infty}(\mathbb{R}; H^1(\mathbb{R}))$ and satisfies
\[\langle \psi M_V, f\rangle_{1,V}=0\,, 
\quad \forall f(w,\nu) \in L^1(\mathbb{R}; H^1(\mathbb{R}))\,.\]
 %\quad \int\limits_{\mathbb{R}} f dw = P\,.\]
 
\noindent Let $\zeta_a(\nu)$ be the indicator function of the interval 
$\mathbb{I} = [-a,a]\,,$ with $a >0\,,$ and choosing $f= \psi M_V \zeta_a$ as a test function in 
$L^1(\mathbb{R}; H^1(\mathbb{R}))$, we obtain
\begin{align*}
 0 &= \langle \psi M_V, \psi M_V \zeta_a\rangle_{1,V}\\
 & = \iint\limits_{\mathbb{R} \; \mathbb{R}} \partial_w 
 \left(\frac{\psi M_V}{M_V}\right) \, 
 \partial_w \left(\frac{\psi M_V \zeta_a}{M_V}\right) \, M_V(w) \,  g(\nu) \, \dw \, \dn \\
 & = \iint\limits_{\mathbb{I} \; \mathbb{R}} |\partial_w \psi|^2 \, M_V(w) \, g(\nu) \, \dw \, \dn \,.
\end{align*}
This means that the difference $\psi=\psi_1 - \psi_2$ does not depend on $w$ and hence of the form 
$\psi(\nu)$, with $\psi(\nu) \in L^{\infty}(\mathbb{R})$.  
Therefore, the proof is completed.
% \psi_1 -\psi_2 = (\beta \chi_u (w) + \phi_1(\nu)) - (\beta \chi_u (w) + \phi_2(\nu))
%= \phi_1 - \phi_2.
\end{proof}

The distributional sense of the variational problem (\ref{vf}) is that $\chi_V(w)$ is a solution of the following elliptic problem:
\begin{equation}\label{vf2}
 -\partial_w(M_V(w) \, (\partial_w \chi_V(w))) =  (w-V)M_V(w) \,,
 \quad \int\limits_{\mathbb{R}} 
 \chi_V(w) \, M_V(w) \, \dw = 0\,,
\end{equation}
where the second equation comes from (\ref{w0}).

%\noindent We can choose $\beta = 1\,.$
%so that the hypothesis (\ref{hypo}) is helpful: 
%\begin{equation}\label{vfb2}
% \int\limits_{\mathbb{R}}\left(\beta (f-P_{\nu} M_V) \frac{M_V}{f}\right)dw 
% = \int\limits_{\mathbb{R}} (f-P_{\nu} M_V) dw = 0 \quad \text{a.e. } \nu \in \mathbb{R}\,.
%%\quad \int\limits_{\mathbb{R}} 
%%\chi_u(w) M_u(w) dw = 0\,,
%\end{equation}

\begin{comment}
We can choose $\beta = 1$.  Substituting $\beta = 1$ into (\ref{vf2}), we derive the following problem 
for $\chi_V(w)$:
\begin{equation}\label{vf3}
 -\partial_w(M_V \partial_w \chi_V) = (w-V)M_V\,,
 %the RHS continuous \ref{hlk2} as it has zero mean over w, Lemma 4.3 p14.
 \quad \int\limits_{\mathbb{R}} 
 \chi_V(w) M_V(w) dw = 0\,.
\end{equation}
\end{comment}

%$M_{V} is bounded from above by 1 and from below by 0, so the bilinear form at the LHS is continuous on its domain.
%\[\int_{\mathbb{R}} M_V \nabla_w \chi_v \, \nabla_w \varphi \, dw 
  % = \int_{\mathbb{R}} (f - P_{\nu}M_V) \varphi \, dx\,.\]
%the average of $f - P_{\nu}M_V$ over $\mathbb{R}$ is zero ensures that the RHS is a continuous linear form on $\mathbb{R}$.
%or using other argument to derive the continuity of $f - P_{\nu}M_V$ by each component.
%The coercivity of the bilinear form is a consequence of the Poincare inequality.

\noindent Then, using Lax--Milgram's Theorem as in \cite{PD1,hlk2,expand1,chi1}, 
we obtain a unique solution of 
(\ref{vf2}):
\begin{equation}\label{sol}
 \chi_V(w) = w - V \,.
\end{equation}

%hlk2 Degond Motsch p14 average of $f - P_{\nu}M_V$ on the LHS over $\mathbb{R}$ 
%ensures that the RHS is a continuous linear form on ... Coercive by Poincare inequality.

%\cite{PD1,chi1,expand1}
%%%%%%%%%%%
\subsection{Hydrodynamic limit}\label{proofthm}

In this subsection, we focus on the proof of Theorem \ref{thm}, when taking the limit $\e \to 0$, and thanks to \cite{PD1}.

\begin{proof}[\textbf{Proof of Theorem \ref{thm}}.] 
From Lemma \ref{m2} and the fact that $\dd f^0 = \lim_{\e \to 0} f^{\e}$ is 
 an equilibrium, we deduce that $f^0$ is given by (\ref{hl}):
\begin{equation}\label{hlp}
f^0(\theta,w, \nu,t) = P_{\nu}(\theta, t) \,  M_{V(\theta,\nu,t)}(w)\,.
\end{equation}
Recalling from Lemma \ref{m2} that for any $(\theta,t)$, the function $\nu \in \mathbb{R} \mapsto 
P_{\nu}(\theta,t) \in \mathbb{R}_{+}$ belongs to $L^1(\mathbb{R})$ and meets the conditions (\ref{Pcond})--(\ref{pt});
while as \eqref{V}, 
\[\dd V=V(\theta,\nu,t) = \nu + KP(\theta,t) = \nu + K \int\limits_{\mathbb{R}} \, P_{\nu_*}(\theta,t) \, g(\nu_*) \,  \dn_*\]
in $\mathbb{R}\,,$ and the normalization (\ref{normM}) of $M_V$ is taken into account. 
%first moment finite.
In the rest of the proof, the superscript 0 is omitted for the purpose of clarity.

We first prove (\ref{hl1}).  By simple computation, we deduce from 
(\ref{normM}) that
\[\lim_{\e \to 0} \int\limits_{\mathbb{R}} f^{\e} \, \dw
= \int\limits_{\mathbb{R}} P_{\nu}(\theta,t) \, M_V(w) \, \dw = P_{\nu}(\theta,t)\,,\]
and derive from (\ref{c1}) that
\[\lim_{\e \to 0} \int\limits_{\mathbb{R}} w f^{\e} \, \dw
= \int\limits_{\mathbb{R}}  P_{\nu}(\theta,t) \, w  \, M_V(w)   \, \dw = V P_{\nu}(\theta,t) \,.\]

Let $\phi(\nu)$ be an arbitrary function in $L^{\infty}(\mathbb{R})$. 
Then, (\ref{per2}) is multiplied by $\phi(\nu)$ and integrated with respect to $(w,\nu) \in 
\mathbb{R} \times \mathbb{R}\,.$ 
With Proposition \ref{gcis}, making use of the fact that $\phi(\nu)$ is a GCI in Definition \ref{gci1},
%$\phi$ GCI makes \int \int Q(f^{\e}) \phi dx d \nu =0.
% Thus, for $\e >0$, the integral on the LHS is 0 before taking the limit $\e \to 0$.
we obtain from the right-hand side of (\ref{per2}) that
\begin{align*}
\iint\limits_{\mathbb{R} \; \mathbb{R}} Q(f^{\e}) \, \phi(\nu) \,  g(\nu) \, \dw \, \dn = 0\,.
\end{align*}
Equivalently, the right-hand side of (\ref{per2}) gives
\begin{equation}\label{ori2a}
\iint\limits_{\mathbb{R} \; \mathbb{R}}
 (\partial_t f^{\e} + \partial_{\theta}(w f^{\e})) \, \phi(\nu) \,  g(\nu) \, \dw \, \dn = 0\,. 
\end{equation}
Letting $\e \to 0$ in (\ref{ori2a}) and recalling that $f^{\e} \to P_{\nu} M_V$, we reach from \eqref{normM} and \eqref{c1} that
\begin{equation}\label{hl1a}
\int\limits_{\mathbb{R}} (\partial_t P_{\nu} + \partial_{\theta}
 (V P_{\nu})) \, \phi(\nu) \, g(\nu) \, \dn = 0\,.  
\end{equation}
Because this equation holds for all $\phi(\nu) \in L^{\infty}(\mathbb{R})$ and $g(\nu) \geq 0\,,$ the result (\ref{hl1}) 
follows: 
\begin{align}
 \partial_t P_{\nu} + \partial_{\theta}(V P_{\nu}) = 0 
 \quad \forall \nu \in \mathbb{R}\,. 
 \end{align}

 \bigskip

In order to prove (\ref{hl2}), we multiply (\ref{per2}) by $\chi_{V[f^{\e}]}(w)$ and integrate the 
resulting equation with 
respect to $w\,.$  Thanks to Proposition \ref{gcis}, $\chi_{V[f^{\e}]}(w)$ is a GCI associated to $V[f^{\e}]$ in Definition \ref{gci1}, with the constraint $V[f^0] = V$ \eqref{Vequil}; and we thus obtain
\begin{align*}
\iint\limits_{\mathbb{R} \; \mathbb{R}} Q(f^{\e}) \, \chi_{V[f^{\e}]}(w) \,   g(\nu) \, \dw \, \dn = 0\,.
\end{align*}
Equivalently,
\begin{equation}\label{ori2}
\iint\limits_{\mathbb{R} \; \mathbb{R}}
 (\partial_t f^{\e} + \partial_{\theta}(w f^{\e})) \, \chi_{V[f^{\e}]}(w) \, g(\nu) \, \dw \, \dn = 0\,. 
\end{equation}
Letting $\e \to 0$ in (\ref{ori2}) and recalling from \eqref{hlp} that $f^{\e} \to P_{\nu} M_V$, we get
\begin{equation}\label{ori3}
\iint\limits_{\mathbb{R} \; \mathbb{R}}
 (\partial_t(P_{\nu} M_V) + \partial_{\theta}(w P_{\nu} M_V)) \, \chi_V(w)  \, g(\nu) \, \dw \, \dn = 0\,.
\end{equation}

Now, by (\ref{sol}), we have 
\[\chi_V (w) = w-V\,.\]
%Note that $w$ has zero mean, i.e. $\int\limits_{\mathbb{R}} w dw = 0$.
Therefore, (\ref{ori3}) is equivalent to
\begin{align*}
\iint\limits_{\mathbb{R} \; \mathbb{R}} (M_V \, \partial_t (P_{\nu}) 
+ P_{\nu} \, \partial_t (M_V) +  w \, M_V \, \partial_{\theta} (P_{\nu}) 
 + w \, P_{\nu} \, \partial_{\theta} (M_V)) \, (w-V)  \, g(\nu) \, \dw \, \dn = 0\,.
\end{align*}
That is,
\begin{align}\label{eqa}
\begin{split}
&\iint\limits_{\mathbb{R} \; \mathbb{R}}
\left(M_V \, \partial_t (P_{\nu}) 
 + (w-V) \, P_{\nu} \, M_V \, \partial_t V 
 + w \, M_V \,\partial_{\theta} (P_{\nu})\right.\\
 & \left. \hspace{155pt} + w\, (w-V)\, P_{\nu} \, M_V \, \partial_{\theta}V\right) 
  (w-V)  \, g(\nu) \, \dw \, \dn = 0\,.
  \end{split}
\end{align} 
Integrating (\ref{eqa}) by parts leads to equation (\ref{hl2}) as desired, that is,
\begin{align}
 P \, \partial_t V +  P \, (Y+KP) \,  \partial_{\theta} V + \partial_{\theta} P = 0\,.
\end{align}

\noindent Indeed, first, by (\ref{m0}),
\begin{align*}
\iint\limits_{\mathbb{R} \; \mathbb{R}}
 (w-V) \, M_V \, \partial_t (P_{\nu}) \,  g(\nu) \, \dw \, \dn = 0\,.
\end{align*}

\noindent Second,
\begin{align*}
\iint\limits_{\mathbb{R} \; \mathbb{R}}
(w-V)^2 M_V \, P_{\nu} \,  \partial_t (V) \,  g(\nu) \, \dw \, \dn
= \int\limits_{\mathbb{R}} P_{\nu} \, \partial_t (V)  \, g(\nu) \, \dn = P \, \partial_t V\,.
\end{align*}

\noindent Third,
\begin{align*}
\iint\limits_{\mathbb{R} \; \mathbb{R}}
w  (w-V) \, M_V \, \partial_{\theta} (P_{\nu}) \,  g(\nu) \, \dw \, \dn
=\int\limits_{\mathbb{R}} \partial_{\theta} (P_{\nu}) \,  g(\nu) \, \dn 
=\partial_{\theta} P\,.
\end{align*}

\noindent Lastly, using \eqref{normM} and \eqref{c1}, we get 
\begin{align*}
\begin{split}
\iint\limits_{\mathbb{R} \; \mathbb{R}} & w(w-V)^2 \, M_V \, P_{\nu}  \, \partial_{\theta}(V) \,  g(\nu) \, \dw \, \dn
=\int\limits_{\mathbb{R}} P_{\nu} \, V \, \partial_{\theta} (V) \,  g(\nu) \, \dn \\
&= \int\limits_{\mathbb{R}} (P_{\nu}) \,  (\nu+KP) \, (\partial_{\theta}V)  \, g(\nu) \, \dn 
= P \, (Y + KP) \, \partial_{\theta}V\,.
\end{split}
\end{align*}
\end{proof}

%%%%%%%
%\section{Conclusion and future work}\label{end}
%This papers studies hydrodynamic limit of the Kuramoto-Sakaguchi model with inertia and white 
%noise effects.  In the future, we hope to investigate properties of the hydrodynamic model 
%(hyperbolicity, linearized stability, etc), including the 
%case of higher dimension,
%, such as the hyperbolicity, the linearized stability of the hydrodynamic system, %
%as well as the local well-posedness of smooth solutions.
%\cite{syh}.

%%%%%%%%%%%
\section{Conclusion}\label{conclude}
In this paper, we have investigated the Kuramoto--Sakaguchi--Fokker--Planck equation (namely, parabolic Kuramoto--Sakaguchi, or Kuramoto--Sakaguchi equation),
%that is a nonlinear parabolic integro-differential equation) 
having inertia and white noise effects.  We derived the hydrodynamic limit for this Kuramoto--Sakaguchi equation.  While showing this major result, as a complement, we also proved a Hardy-type inequality over the entire real line.  

\appendix

\section{Preliminary results about absolute continuity}\label{hardytype}
%%%
For the purpose of stating our Hardy-type inequality \eqref{hardyineq}, we need the following preliminaries regarding continuity.

%p67 leonipde
\begin{definition}[\cite{leonipde,folland,brezis},  \textbf{absolute continuity}]\label{ACdef}
Let $J \subseteq \mathbb{R}$ be an interval. A function $F: J \to \mathbb{R}$ is called \textbf{absolutely continuous} 
%\cite[p.~207]{brezis}) 
if for every $\e >0\,,$ there exists $\delta >0$ such that for any finite sequence of pairwise disjoint subintervals (which are non-overlapping or have disjoint interiors) $(a_1,b_1), \ldots, (a_l,b_l) \subseteq J$ with $a_k < b_k \in J\,,$ 
%https://proofwiki.org/wiki/Absolutely_Continuous_Real_Function_is_Uniformly_Continuous
%
%subinterval https://en.wikipedia.org/wiki/Interval_(mathematics)
%\cite[p.~105]{folland}, \cite[p.~207]{brezis}, \cite[p.~119]{Royden}, \cite[p.~108]{royden88}, encyclopediaofmath.org
%pw disjoint: any pair of the sets has empty intersection, i.e no overlapping in elements https://math.stackexchange.com/questions/3141437/what-does-pairwise-disjoint-mean#:~:text=Pairwise%20disjoint%20means%20that%20any,have%20a%20member%20in%20common.
%
%aka $[a_k,b_k] \subset J$
%proper subset
%https://encyclopediaofmath.org/wiki/Absolute_continuity
\begin{equation}\label{acdef1}
\sum_{k=1}^l (b_k - a_k) < \delta \implies \sum_{k=1}^l |F(b_k) - F(a_k)| < \e \,.
\end{equation}
Note that as $l$ is arbitrary, one may also let $l= \infty\,,$ that is, replacing finite sum with series from countably infinite collection.
%https://math.stackexchange.com/questions/4099894/the-definition-of-absolute-continuous-function
Here, $J$ can be $[a,b]$ or $(a,b)\,,$ and especially, this definition \ref{ACdef} includes the case $J=\mathbb{R}$ (as in \cite[p.~135]{folland} or \cite[p.~105]{cohnm}).  
%https://math.stackexchange.com/questions/4426/absolute-continuity-on-an-open-interval-of-the-real-line
%
%https://www.math.ucdavis.edu/~hunter/pdes/ch3A.pdf

%https://people.math.ethz.ch/~fdalio/old/VorlesungenMassundIntegralFS16/ACfunctions.pdf
%
%https://www.ams.org/bookstore/pspdf/gsm-105-prev.pdf

We denote the space of all absolutely continuous functions $F: J \to \mathbb{R}$ by $AC(J)\,:$ \begin{equation}\label{ACrep}  
AC(J) := \{F: J \to \mathbb{R} \ \big | \ F \tu{ is absolutely continuous on } J\}\,,
\end{equation}
and
\[AC([a,b]) = AC[a,b]\,, \qquad AC((a,b)) = AC(a,b)\,.\]

\bigskip

For $J=[a,b]\,,$ by the Fundamental Theorem of Calculus for Lebesgue Integrals \cite[Theorem 3.35]{folland}, if $-\infty < a < b < \infty$ and $F: [a,b] \to \mathbb{R}\,,$ then for all $x \in [a,b]\,,$ the following are equivalent:
\begin{enumerate}[(a)]
\item $F$ is absolutely continuous on $[a,b]\,.$ \label{acftc}
\item $F(x) - F(a) =  \dd \int_a^x f(t) \, \dt$ for some $f \in L^1([a,b],m)\,.$ \label{aci}
\item $F$ is differentiable (and has a derivative $F'$) a.e.\ on $[a,b]\,,$ $F' \in L^1([a,b],m)\,,$ and $F(x) - F(a) = \dd \int_a^x F'(t) \, \dt\,.$ \label{acd}
\end{enumerate}
If these equivalent conditions hold, then necessarily any function $f$ as in condition \ref{aci} satisfies $f=F'$ a.e.\ in $[a,b]\,.$ 
%https://math.stackexchange.com/questions/4426/absolute-continuity-on-an-open-interval-of-the-real-line
\bigskip

%p68
\begin{definition}[\cite{leonipde},  \textbf{locally absolute continuity}]\label{locACdef}
Let $J \subseteq \mathbb{R}$ be an interval. A function $F: J \to \mathbb{R}$ is called \textbf{locally absolutely continuous} if it is absolutely continuous in the sense of \eqref{acdef1} on $[\lambda, \gamma]$ for every compact subinterval $[\lambda,\gamma] \subseteq J\,.$  
\end{definition}
The collection of all locally absolutely continuous functions $F: J \to \mathbb{R}$ is denoted by $AC_{\tu{loc}}(J)\,.$ That is,
\[AC_{\tu{loc}}(J):=\{F: J \to \mathbb{R} \, \big | \, F \in AC[\lambda,\gamma] \tu{ for every compact subinterval } [\lambda,\gamma] \subseteq J\}\,.\]

Also, benefiting from the notation in \cite[Definition 1.2 on p.~5--6]{hardyopic}, we denote
\[AC_{\tu{loc}}([a,b]) = AC_{\tu{loc}}[a,b]=AC[a,b] \,, \qquad AC_{\tu{loc}}((a,b)) =AC_{\tu{loc}}(a,b)\,.\]
%AC_loc ams p74 https://www.ams.org/bookstore/pspdf/gsm-105-prev.pdf
%is chap3 p68 Leoni https://bookstore.ams.org/view?ProductCode=GSM/181
%old Leoni https://www.ams.org/publications/authors/books/postpub/gsm-105
Moreover, let
\begin{align}\label{aclr}
\begin{split}
AC_{\tu{L}}(J) &= AC_{\tu{L}}(a,b):=\{ F \in AC_{\tu{loc}}(J) \ | \ \lim_{w \to a^{+}} F(w) = 0\}\,,\\ 
AC_{\tu{R}}(J) &=AC_{\tu{R}}(a,b):=\{ F \in AC_{\tu{loc}}(J) \ | \ \lim_{w \to b^{-}} F(w) = 0\}\,.
\end{split}
\end{align}
Specially, we denote
\begin{equation}\label{aclocr}
AC_{\tu{loc}}(\mathbb{R})=\{F: \mathbb{R} \to \mathbb{R} \, \big | \, F \in AC[\lambda,\gamma] \tu{ for every compact interval } [\lambda,\gamma] \subset \mathbb{R}\}\,,
\end{equation}
which is called the space of locally absolutely continuous functions over $\mathbb{R}\,.$ 
In this paper, we use the convention that if a function belongs to $AC(\mathbb{R})$ (defined in \eqref{ACrep}) then it belongs to $AC_{\tu{loc}}(\mathbb{R})$ (defined by \eqref{aclocr}), that is $AC(\mathbb{R}) \subset AC_{\tu{loc}}(\mathbb{R})\,.$

%AC([a,b]) on bounded [a,b], ACloc(R)
%http://individual.utoronto.ca/jordanbell/notes/sobolev1d.pdf
%
%AC_loc(R) https://people.math.ethz.ch/~fdalio/old/VorlesungenMassundIntegralFS16/ACfunctions.pdf

%ref AMS best defs locally AC
%lim_{∞−}:=lim_{a−} as a→+∞
%https://math.stackexchange.com/questions/2665102/can-a-limit-approaching-positive-infinity-be-right-sided
\end{definition}
%Def locally AC https://www.ams.org/bookstore/pspdf/gsm-105-prev.pdf
%
%loc AC on (0,\infty) Rupert https://arxiv.org/abs/2204.00877v1
%
%F is locally AC, then F is a.e. differentiable and its derivative is a.e. equal to f, 
%and differentiable almost everywhere on [a,b]
%https://www.sciencedirect.com/topics/mathematics/absolutely-continuous-function#:~:text=If%20f%3A%5Ba%2C%20b,over%20%5Ba%2C%20b%5D.&text=If%20f%2C%20g%3A%20%5Ba,for%20every%20c%20%E2%88%88%20%E2%84%9D.
%
%https://mast.queensu.ca/~andrew/papers/pdf/2021a.pdf

%%%%%
\bigskip
Recall that our considered space is \eqref{w0}:
\begin{equation}\label{w00}
W_0 = \left\{\varphi \in H^1(\mathbb{R}) \cap L^1(\mathbb{R}) \,, \int\limits_{\mathbb{R}} \varphi(w) \, \dw = 0\right\}\,.
\end{equation}
To work with this space, where $H^1(\mathbb{R})=W^{1,2}(\mathbb{R})\,,$ it is worth noting that for any open set $\Omega \subseteq \mathbb{R}$ and any $1 \leq p \leq \infty\,,$ we have $L^p(\Omega) \subset L^p_{\tu{loc}}(\Omega)$ \cite[p.~67]{boyer_boch_der} 
%Definition II.2.35
as well as $L^p_{\tu{loc}}(\Omega) \subset L^1_{\tu{loc}}(\Omega)$ (for $1 < p \leq \infty$) \cite[p.~106]{brezis}, and thus $L^p(\Omega) \subset L^1_{\tu{loc}}(\Omega)\,.$  Consequently, $H^1(\mathbb{R}) \subset H^1_{\tu{loc}}(\mathbb{R})$ (see the representations of local Sobolev spaces in \cite[Definition 7.14 on p.~188]{leonipde} or \cite[Definition III.2.2 on p.~136]{boyer_boch_der}).  
%wiki Lloc https://en.wikipedia.org/wiki/Locally_integrable_function
\begin{comment}
To define $H^1_{\tu{loc}}(\mathbb{R})\,,$ we first recall from \cite[Appendix A.2(vi) on p.~698]{evans} that for $U$ and $V$ as open subsets of $\mathbb{R}\,,$ we write $V \subset \subset U$ if $V \subset \overline{V} \subset U$ and $\overline{V}$ is compact, and say that $V$ is \textit{compactly contained} in $U\,.$ The function $\varphi \in H^1_{\tu{loc}}(\mathbb{R})$ if and only if $\varphi \in H^1(\mathbb{R})$ for some open subsets $U$ and $V$ in $\mathbb{R}$ such that $V \subset \subset U$ \cite[p.~259]{evans}.
\end{comment}

%%%
\bigskip

On the real line, the connection between Sobolev functions and absolutely continuous functions
is described in \cite[Theorem 7.16 on p.~189]{leonipde} (with proof), and we restate it more specifically in our context, for the sake of completeness.

%%%%%
\begin{theorem}[Theorem 7.16 in \cite{leonipde}]\label{mainH1AC}
Let $J \subseteq \mathbb{R}$ be an open set and let $1 \leq p \leq \infty\,.$  Then, a function $u: J \to \mathbb{R}$ belongs to $W^{1,p}(J)$ if and only if it admits an absolutely continuous representative $\overline{u}: J \to \mathbb{R}$ (with $u=\overline{u}$ almost everywhere) such that both $\overline{u}$ and its classical derivative $\overline{u}'$ belong to $L^p(J)\,.$  Furthermore, if $p>1\,,$ then $\overline{u}$ is H\"{o}lder continuous of exponent $1/p'\,,$ where $1/p + 1/p'=1\,.$   
\end{theorem}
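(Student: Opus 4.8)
The plan is to prove the two implications of the equivalence separately, after reducing to the case in which $J=(a,b)$ is a single open interval with $-\infty\le a<b\le+\infty$. This reduction is legitimate because an open set $J\subseteq\mathbb{R}$ is a countable disjoint union of such intervals, and all the global integrability conclusions below follow by summing $p$-th powers over the components (or by taking a supremum when $p=\infty$).

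First I would treat the ``absolutely continuous representative $\Rightarrow$ $W^{1,p}$'' direction. Suppose $u$ admits a representative $\overline{u}$ that is absolutely continuous on every compact subinterval of $J$ with $\overline{u},\overline{u}'\in L^p(J)$. I would check directly that $\overline{u}'$ is the weak derivative of $u$: for $\varphi\in C_c^\infty(J)$ pick a compact interval $[\lambda,\gamma]\subseteq J$ with $\tu{supp}\,\varphi$ contained in its interior; then $\overline{u}\varphi$ is absolutely continuous on $[\lambda,\gamma]$, the product rule $(\overline{u}\varphi)'=\overline{u}'\varphi+\overline{u}\varphi'$ holds a.e., and by the equivalence $(a)\Rightarrow(c)$ recalled after Definition \ref{ACdef} we get $\int_\lambda^\gamma(\overline{u}\varphi)'\,\dt=\overline{u}(\gamma)\varphi(\gamma)-\overline{u}(\lambda)\varphi(\lambda)=0$. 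Hence $\int_J\overline{u}\,\varphi'\,\dt=-\int_J\overline{u}'\,\varphi\,\dt$, so $u=\overline{u}\in L^p(J)$ has weak derivative $\overline{u}'\in L^p(J)$, i.e.\ $u\in W^{1,p}(J)$.

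For the converse, let $u\in W^{1,p}(J)$ with weak derivative $v:=u'\in L^p(J)\subset L^1_{\tu{loc}}(J)$. Fixing a basepoint $x_0$ in each component, I would set $w_0(x):=\int_{x_0}^{x}v(t)\,\dt$; this is well defined since $v\in L^1_{\tu{loc}}$, and the equivalences recalled after Definition \ref{ACdef}, applied on each compact subinterval, give $w_0\in AC_{\tu{loc}}(J)$ with $w_0'=v$ a.e. By the direction already proved, $w_0$ also has weak derivative $v$, so $g:=u-w_0\in L^1_{\tu{loc}}(J)$ satisfies $\int_J g\,\varphi'\,\dt=0$ for every $\varphi\in C_c^\infty(J)$. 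The crux is the classical lemma that such a $g$ is a.e.\ equal to a constant on each component $(a,b)$: fix $\rho\in C_c^\infty((a,b))$ with $\int_a^b\rho\,\dt=1$; for any $\varphi\in C_c^\infty((a,b))$ the function $\psi(x):=\int_a^x\bigl(\varphi(t)-(\int_a^b\varphi\,\dt)\,\rho(t)\bigr)\,\dt$ lies in $C_c^\infty((a,b))$ — its total integral vanishes, so it still has compact support — and $\psi'=\varphi-(\int_a^b\varphi\,\dt)\,\rho$; testing against $\psi$ yields $\int_a^b g\,\varphi\,\dt=(\int_a^b\varphi\,\dt)\int_a^b g\,\rho\,\dt=:c\int_a^b\varphi\,\dt$, whence $g\equiv c$ a.e.\ by the fundamental lemma of the calculus of variations (alternatively one argues by mollification). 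Setting $\overline{u}:=w_0+c$ on that component, we obtain $\overline{u}=u$ a.e., $\overline{u}\in AC_{\tu{loc}}(J)$, and $\overline{u}'=v=u'$ a.e.; both $\overline{u}$ and $\overline{u}'$ belong to $L^p(J)$ because they coincide a.e.\ with $u$ and $v$.

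Finally, when $p>1$ the H\"older estimate is immediate from the representation $\overline{u}(y)-\overline{u}(x)=\int_x^y v(t)\,\dt$ and H\"older's inequality with exponents $p$ and $p'$: for $x<y$ in a common component, $|\overline{u}(y)-\overline{u}(x)|\le\bigl(\int_x^y|v|^p\,\dt\bigr)^{1/p}|y-x|^{1/p'}\le\|v\|_{L^p(J)}\,|y-x|^{1/p'}$, which is the asserted H\"older continuity of exponent $1/p'$ with a constant uniform over $J$ (and reduces to a Lipschitz bound with constant $\|v\|_{L^\infty(J)}$ when $p=\infty$). The only step that goes beyond bookkeeping is the ``zero weak derivative $\Rightarrow$ a.e.\ constant'' lemma, where the careful choice of the test function $\psi$ (or a mollification argument) does the real work; everything else is an application of the Fundamental Theorem of Calculus for absolutely continuous functions and of H\"older's inequality.
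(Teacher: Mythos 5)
Your proof is correct, and it follows the standard argument: integration by parts for absolutely continuous functions in one direction, and in the other the primitive $w_0(x)=\int_{x_0}^x u'(t)\,\dt$ together with the du Bois--Reymond lemma that a locally integrable function with vanishing weak derivative is a.e.\ constant on each component. The paper does not actually prove this statement --- it imports it verbatim from Leoni's book and defers to the proof there --- and the proof in that reference proceeds along essentially the same lines as yours, so there is no substantive difference in approach. One minor point worth noting: for an unbounded component and $p>1$ your construction yields a \emph{locally} absolutely continuous representative (global absolute continuity would require $\overline{u}'\in L^1$), which is consistent with how the paper itself uses the result in Corollary \ref{corH1AC}.
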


By considering each connected component of $\Omega\,,$ one can assume $\Omega = J\,,$ where $J \subseteq \mathbb{R}$ is an open interval. In this manner, a version of \cite[Theorem 7.16]{leonipde}
%{mainH1AC}
is presented in \cite[Appendix C, Chapter 17, Theorem 17.15 on p.~647]{leonifrac} over an open interval $J \subseteq \mathbb{R}\,,$ also by Leoni.
%https://www.ams.org/publications/authors/books/postpub/gsm-229-appendix-sobolev.pdf
%
Therefore, letting $J = \mathbb{R}$ in Theorem \ref{mainH1AC}, and using the fact that $AC(\mathbb{R}) \subset AC_{\tu{loc}}(\mathbb{R})$ (where $AC(\mathbb{R})$ and $AC_{\tu{loc}}(\mathbb{R})$ are respectively defined in \eqref{ACrep} and \eqref{aclocr}), we directly derive the following result from the proof of \cite[Theorem 7.16]{leonipde}. 
\begin{cor}\label{corH1AC}
A function $\varphi: \mathbb{R} \to \mathbb{R}$ belongs to $H^1(\mathbb{R})$ if and only if it admits a locally absolutely continuous representative $\overline{\varphi}: \mathbb{R} \to \mathbb{R}$ (with $u=\overline{u}$ almost everywhere) such that both $\overline{\varphi}$ and its classical derivative $\overline{\varphi}'$ belong to $L^2(\mathbb{R})\,.$
Moreover, $\overline{\varphi}$ is H\"{o}lder continuous of exponent $1/2$ over all $\mathbb{R}\,,$ and thus $\overline{\varphi}$ is uniformly continuous and continuous on $\mathbb{R}\,.$ 
\end{cor}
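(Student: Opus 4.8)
The plan is to obtain the corollary as a direct specialization of Theorem~\ref{mainH1AC}. First I set $J=\mathbb{R}$ and $p=2$ there: $\mathbb{R}$ is an open subset of $\mathbb{R}$ and $H^1(\mathbb{R})=W^{1,2}(\mathbb{R})$, so the theorem yields at once that $\varphi : \mathbb{R} \to \mathbb{R}$ lies in $H^1(\mathbb{R})$ if and only if it admits an absolutely continuous representative $\overline{\varphi} : \mathbb{R} \to \mathbb{R}$ (so that $\varphi = \overline{\varphi}$ a.e.) with both $\overline{\varphi}$ and its classical derivative $\overline{\varphi}'$ in $L^2(\mathbb{R})$. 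Here ``absolutely continuous on $\mathbb{R}$'' is read in the sense of Definition~\ref{ACdef}, which by the remark following \eqref{acdef1} explicitly includes the case $J=\mathbb{R}$.

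Next I upgrade ``absolutely continuous'' to ``locally absolutely continuous'' using the inclusion $AC(\mathbb{R}) \subset AC_{\tu{loc}}(\mathbb{R})$ recorded after \eqref{aclocr}: concretely, if $\overline{\varphi}$ satisfies the $\e$--$\delta$ condition \eqref{acdef1} on all of $\mathbb{R}$, then restricting the admissible finite families of pairwise disjoint subintervals to those contained in an arbitrary compact interval $[\lambda,\gamma] \subset \mathbb{R}$ shows $\overline{\varphi} \in AC[\lambda,\gamma]$; since $[\lambda,\gamma]$ is arbitrary, $\overline{\varphi} \in AC_{\tu{loc}}(\mathbb{R})$ in the sense of \eqref{aclocr}.

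For the final regularity assertion I invoke the last clause of Theorem~\ref{mainH1AC}: since $p=2>1$, the representative is H\"{o}lder continuous of exponent $1/p'$ with $1/p + 1/p' = 1$; here $p'=2$, hence the exponent is $1/2$, and this holds on all of $\mathbb{R}$, i.e.\ there is a constant $C>0$ with $|\overline{\varphi}(x) - \overline{\varphi}(y)| \le C|x-y|^{1/2}$ for all $x,y \in \mathbb{R}$. Uniform continuity then follows immediately: given $\e > 0$, the choice $\delta = (\e/C)^2$ forces $|\overline{\varphi}(x) - \overline{\varphi}(y)| \le C\delta^{1/2} = \e$ whenever $|x-y| < \delta$; and uniform continuity trivially gives $\overline{\varphi} \in C(\mathbb{R})$.

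I do not anticipate a genuine obstacle, since the corollary is nothing more than Theorem~\ref{mainH1AC} read with the specific parameters $J=\mathbb{R}$ and $p=2$. The only steps needing a word of justification rather than computation are the identification of ``absolutely continuous on the unbounded interval $\mathbb{R}$'' with the notion of Definition~\ref{ACdef} and the inclusion $AC(\mathbb{R}) \subset AC_{\tu{loc}}(\mathbb{R})$ --- both already fixed as conventions in the excerpt --- so the proof reduces to citing Theorem~\ref{mainH1AC} and unwinding those conventions.
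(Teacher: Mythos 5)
Your proposal is correct and follows essentially the same route as the paper: specialize Theorem~\ref{mainH1AC} to $J=\mathbb{R}$, $p=2$, pass from $AC(\mathbb{R})$ to $AC_{\tu{loc}}(\mathbb{R})$ via the stated convention, and deduce uniform continuity and continuity from H\"{o}lder continuity of exponent $1/2$. The only cosmetic difference is that the paper re-derives the H\"{o}lder estimate explicitly, via the Fundamental Theorem of Calculus and H\"{o}lder's inequality, obtaining the concrete constant $\|\overline{\varphi}'\|_{L^2(\mathbb{R})}$, whereas you invoke the final clause of Theorem~\ref{mainH1AC} directly; both are valid.
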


\begin{proof}
The proof is laborious with many steps and can be found in \cite[Theorem 7.16]{leonipde}, for the connection between Sobolev functions and absolutely continuous functions over all $\mathbb{R}\,.$  This result is also presented in \cite[p.~259, or Exercise 5.4 on p.~306]{evans} with a statement for an open interval $(a,b) \subset \mathbb{R}$ (strict inclusion), as well as \cite[Theorem 4.20 on p.~188]{evans15} with a proof for local Sobolev spaces $W_{\tu{loc}}^{1,p}(\mathbb{R})\,,$ where $1\leq p < \infty\,.$  
%In these proofs, one can make use of the fundamental lemma of the calculus of variations.
%FLCV Lemma 7.4 ver2 weak der and const (using ver 1 zero in proof) 
%see also wiki https://en.wikipedia.org/wiki/Fundamental_lemma_of_the_calculus_of_variations

Regarding the continuity of $\overline{\varphi}\,,$ by the Fundamental Theorem of Calculus %\ref{acd} 
and H\"{o}lder's inequality, for $w > y\,,$ we have
\begin{equation}\label{holdercont}
|\overline{\varphi}(w) - \overline{\varphi}(y)| = \left | \int_y^w \overline{\varphi}'(s) \, \ds \right | \leq \int_y^w |\overline{\varphi}'(s)| \, \ds \leq \|\overline{\varphi}'\|_{L^2(\mathbb{R})} \, |w-y|^{1/2}\,.
\end{equation}
Hence, $\overline{\varphi}$ is H\"{o}lder continuous with exponent $1/2$ over all $\mathbb{R}\,,$ which implies that $\overline{\varphi}$ is also uniformly continuous on $\mathbb{R}\,.$ 
%https://en.wikipedia.org/wiki/H%C3%B6lder_condition
%May18 Prop7
Indeed, for every real number $\e >0\,,$ there exists a real number $\delta >0$ satisfying $\|\overline{\varphi}'\|_{L^2(\mathbb{R})} \,  \delta^{1/2} < \e$ such that for every $w,y \in H^1(\mathbb{R})$ with $|w-y| < \delta\,,$ we obtain from \eqref{holdercont} that 
\[|\overline{\varphi}(w) - \overline{\varphi}(y)| \leq  \|\overline{\varphi}'\|_{L^2(\mathbb{R})} \, |w-y|^{1/2} < \|\overline{\varphi}'\|_{L^2(\mathbb{R})} \, \delta^{1/2} < \e\,.\]    
Hence, $\overline{\varphi}$ is also uniformly continuous and thus continuous over all $\mathbb{R}\,.$

A simpler way to show the continuity of $\overline{\varphi}$ on $\mathbb{R}$ is as follows.  With $\overline{\varphi} \in AC_{\tu{loc}}(\mathbb{R})\,,$ by Definition \eqref{aclocr}, $\overline{\varphi}$ is absolutely continuous on every compact subinterval of $\mathbb{R}\,.$ Thus, $\overline{\varphi}$ is continuous on every compact subinterval of $\mathbb{R}\,,$ so $\overline{\varphi}$ is continuous in $\mathbb{R}$ (because $\mathbb{R}$ is a complete metric space with the metric $d(x,y) = |x-y|$).
\end{proof}
%May18 Prop6,7
%https://math.stackexchange.com/questions/3665278/limit-of-h1-mathbbr-functions
%
%If lim_{x \to \infty} f'(x) = \infty then f is not uniformly cont https://math.stackexchange.com/questions/503093/proving-fx-x2-is-not-uniformly-continuous-on-the-real-line

\begin{remark}\label{idh1ac}
Corollary \ref{corH1AC} means that there exists a representative $\overline{\varphi} \in H^1(\mathbb{R})$ (still conveniently denoted by $\varphi$) which is locally absolutely continuous on $\mathbb{R}\,.$  

Also note that the characteristic ``$\varphi$ has a continuous representative $\overline{\varphi}$'' (with $\varphi = \overline{\varphi}$ a.e.\ in $\mathbb{R}$) is not the same as ``$\varphi$ is continuous a.e.'' \cite[Remark 5 on p.~204]{brezis}.
\end{remark}

\bigskip
%%%%%
Some further properties of $\varphi \in W_0$ defined in \eqref{w0} are as follows.
\begin{proposition}\label{d0}
Given $\varphi \in W_0\,,$ there is some $d \in \mathbb{R}$ such that $\varphi(d) = 0\,.$  Moreover, there exist $\dd \lim_{w \to d^{+}}\varphi(w) = \lim_{w \to d^{-}}\varphi(w) = \lim_{w \to d}\varphi(w) = \varphi(d) = 0\,.$
\end{proposition}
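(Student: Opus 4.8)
The plan is to exploit the continuity of (the chosen representative of) $\varphi$ together with the two constraints built into $W_0$, namely $\varphi \in L^1(\mathbb{R})$ and $\int_{\mathbb{R}} \varphi(w) \, \dw = 0$. By Corollary \ref{corH1AC} and Remark \ref{idh1ac}, $\varphi$ is identified with a locally absolutely continuous --- in particular continuous --- function on all of $\mathbb{R}$, so the intermediate value theorem and the usual sign considerations for continuous functions are available; moreover, by Lemma \ref{irl} the Lebesgue integral of $\varphi$ coincides with its improper Riemann integral, and the vanishing-integral hypothesis says this common value is $0$.

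First I would dispose of the trivial case $\varphi \equiv 0$, for which any $d$ (say $d = 0$) works. Otherwise $\varphi$ is continuous and not identically zero, so there is a point $a \in \mathbb{R}$ with $\varphi(a) \neq 0$; replacing $\varphi$ by $-\varphi$ if necessary (which is again in $W_0$), we may assume $\varphi(a) > 0$. I then claim $\varphi$ must also take a strictly negative value somewhere. Indeed, if $\varphi \geq 0$ on all of $\mathbb{R}$, then by continuity there are $\delta, \eta > 0$ with $\varphi \geq \eta$ on $[a-\delta, a+\delta]$, whence $\int_{\mathbb{R}} \varphi(w) \, \dw \geq \int_{a-\delta}^{a+\delta} \varphi(w) \, \dw \geq 2\delta\eta > 0$, contradicting $\int_{\mathbb{R}} \varphi(w) \, \dw = 0$. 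Hence there is $b \in \mathbb{R}$ with $\varphi(b) < 0$.

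Applying the intermediate value theorem to $\varphi$ on the compact interval with endpoints $a$ and $b$, on which $\varphi$ changes sign, yields a point $d$ strictly between $a$ and $b$ with $\varphi(d) = 0$, which proves the first assertion. For the ``moreover'' statement: since $\varphi$ is continuous on $\mathbb{R}$, it is in particular continuous at $d$, so the two one-sided limits and the two-sided limit of $\varphi$ at $d$ all exist and equal $\varphi(d) = 0$. (An alternative route to the existence of a zero, worth mentioning, is that an $L^1$ function which is uniformly continuous on $\mathbb{R}$ --- as $\varphi$ is, by Corollary \ref{corH1AC} --- necessarily satisfies $\varphi(w) \to 0$ as $|w| \to \infty$; combined with $\varphi$ being nonzero somewhere and having zero integral, this again forces a sign change, hence a zero by the intermediate value theorem.)

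There is essentially no serious obstacle here; the only point requiring a little care is the justification that a continuous function that is everywhere nonzero on the connected set $\mathbb{R}$ cannot integrate to zero, which is precisely the neighborhood-and-sign estimate carried out above. Everything else reduces to the intermediate value theorem and the definition of continuity at the point $d$.
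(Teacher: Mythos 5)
Your proposal is correct and follows essentially the same route as the paper's proof: continuity of the representative (Corollary \ref{corH1AC}) plus the zero-integral condition forces a sign change, hence a zero. You are in fact slightly more careful than the paper, since you explicitly justify with a neighborhood estimate why an everywhere-nonnegative continuous function that is positive at a point has strictly positive integral, a step the paper asserts without detail.
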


\begin{proof}
With $\varphi \in W_0$ (having $\varphi = \overline{\varphi}$ by Remark \ref{idh1ac}), where $\dd \int\limits_{\mathbb{R}} \varphi(w) \, \dw = 0$ (the average value of $\varphi(w)$ over $\mathbb{R}$ is zero), it holds that there exists an element $d \in \mathbb{R}$ such that $\varphi(d) = 0\,.$  Indeed, assume that $\varphi(w) \neq 0$ for any $w \in \mathbb{R}\,.$  Since $\varphi(w)$ is continuous on $\mathbb{R}$ (from Corollary \ref{corH1AC}), it follows that $\varphi$ has no jumps, and it can be always positive or always negative.  Without loss of generality, assume $\varphi$ is always positive (and the case $\varphi$ is always negative is analogous). But then, with positive $\varphi\,,$ the integral $\dd \int_{-\infty}^{\infty} \varphi(w) \, \dw$ is positive.  This contradicts the given condition  $\dd \int_{-\infty}^{\infty} \varphi(w) \, \dw=0\,.$  Thus, there is some $d \in \mathbb{R}$ such that $\varphi(d) =0\,.$ 

Furthermore, by the continuity of $\varphi$ over all $\mathbb{R}\,,$ there exist $\dd \lim_{w \to d^{+}}\varphi(w) = \lim_{w \to d^{-}}\varphi(w) = \lim_{w \to d}\varphi(w) = \varphi(d) = 0\,.$
\end{proof}
%%lim and sequence, Thm 6,7 https://www.math.stonybrook.edu/~olga/mat319-spr12/limits-funct.pdf
%Prove by contradiction
%https://en.wikipedia.org/wiki/Limit_of_a_function 
%a function is continuous if all of its limits agree with the values of the function
%%%
%May18 Prop7 vanish, Prop6 uc
%https://math.stackexchange.com/questions/4761657/h1-mathbbr-functions-vanish-at-infinity

%Last, it is possible to show more.
\begin{proposition}\label{bounded0infty}
For any $1<p<\infty\,,$ if $\varphi \in W^{1,p}(\mathbb{R})\,,$ then $\varphi$ is bounded, and $\dd \lim_{w \to \pm \infty} \varphi(w) = 0\,.$ 
\end{proposition}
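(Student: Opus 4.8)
The plan is to pass to the absolutely continuous representative of $\varphi$ furnished by Theorem \ref{mainH1AC} (applied with $J = \mathbb{R}$ and the given exponent $p$) and then to estimate $|\varphi|^p$ rather than $\varphi$ itself: the pointwise a.e.\ derivative of $|\varphi|^p$ pairs against $\varphi' \in L^p(\mathbb{R})$ through H\"{o}lder's inequality with the conjugate exponent $p'\,,$ $1/p + 1/p' = 1\,.$

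First I would invoke Theorem \ref{mainH1AC} to replace $\varphi$ by its locally absolutely continuous (hence continuous) representative $\overline{\varphi}\,,$ with $\overline{\varphi}\in L^p(\mathbb{R})$ and classical derivative $\overline{\varphi}' \in L^p(\mathbb{R})\,,$ and identify $\varphi$ with $\overline{\varphi}$ in the spirit of Remark \ref{idh1ac}. Since $1<p<\infty\,,$ the map $t \mapsto |t|^p$ is $C^1$ on $\mathbb{R}\,,$ so $G := |\overline{\varphi}|^p$ lies in $AC_{\tu{loc}}(\mathbb{R})$ with $G'(w) = p\,|\overline{\varphi}(w)|^{p-1}\,\sgn(\overline{\varphi}(w))\,\overline{\varphi}'(w)$ for a.e.\ $w\,.$ The crucial point is that $G' \in L^1(\mathbb{R})\,:$ since $p'(p-1) = p\,,$ we have $|\overline{\varphi}|^{p-1} \in L^{p'}(\mathbb{R})$ with $\big\| |\overline{\varphi}|^{p-1} \big\|_{L^{p'}} = \|\overline{\varphi}\|_{L^p}^{\,p-1}\,,$ so H\"{o}lder gives $\|G'\|_{L^1(\mathbb{R})} \le p\,\|\overline{\varphi}\|_{L^p}^{\,p-1}\,\|\overline{\varphi}'\|_{L^p} < \infty\,.$ Consequently $G(w) - G(y) = \int_y^w G'(s)\,\ds$ for all $y < w\,.$

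Next I would use that $\overline{\varphi} \in L^p(\mathbb{R})$ forces $\liminf_{y\to -\infty}|\overline{\varphi}(y)| = \liminf_{w\to +\infty}|\overline{\varphi}(w)| = 0$ (otherwise $|\overline{\varphi}|$ would be bounded below by a positive constant on a half-line, contradicting integrability of $|\overline{\varphi}|^p$), hence there are sequences $y_n \to -\infty$ and $w_n \to +\infty$ with $G(y_n)\to 0$ and $G(w_n)\to 0\,.$ Letting $y = y_n \to -\infty$ in the identity above yields $G(w) = \int_{-\infty}^{w} G'(s)\,\ds\,,$ so $|\overline{\varphi}(w)|^p = G(w) \le \|G'\|_{L^1(\mathbb{R})} \le p\,\|\overline{\varphi}\|_{L^p}^{\,p-1}\,\|\overline{\varphi}'\|_{L^p}$ for every $w\,,$ which gives boundedness with an explicit bound. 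For the decay, the same identity with $w_n\to +\infty$ gives $G(w) = -\int_{w}^{\infty}G'(s)\,\ds\,,$ whence $G(w) \le \int_{w}^{\infty}|G'(s)|\,\ds \to 0$ as $w\to +\infty$ since it is the tail of an $L^1$ function; symmetrically $G(w)\le \int_{-\infty}^{w}|G'(s)|\,\ds \to 0$ as $w\to -\infty\,.$ Taking $p$-th roots yields $\lim_{w\to\pm\infty}\overline{\varphi}(w)=0\,,$ and therefore $\lim_{w\to\pm\infty}\varphi(w)=0\,.$

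I do not anticipate a serious obstacle: the only steps needing care are the chain rule for $|\overline{\varphi}|^p$ at the zeros of $\overline{\varphi}$ (covered by $C^1$-smoothness of $t\mapsto|t|^p$ precisely because $p>1$) and the verification $G'\in L^1(\mathbb{R})\,,$ which is exactly where the hypothesis $1<p<\infty$ is used. A softer alternative would be to first deduce decay of $\overline{\varphi}$ along a sequence from $\overline{\varphi}\in L^p$ and then upgrade it to a genuine limit using the $1/p'$-H\"{o}lder continuity in Theorem \ref{mainH1AC}; but the $|\varphi|^p$ computation is cleaner, self-contained, and simultaneously produces the quantitative bound $\|\varphi\|_{L^\infty(\mathbb{R})} \le \big(p\,\|\varphi\|_{L^p}^{\,p-1}\,\|\varphi'\|_{L^p}\big)^{1/p}\,.$
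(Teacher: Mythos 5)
Your argument is correct. Note, though, that the paper does not actually prove this proposition: it simply cites Brezis (Theorem~8.8 for boundedness via H\"{o}lder and Young, Corollary~8.9 for the decay via density of $C_c^1(\mathbb{R})$ in $W^{1,p}(\mathbb{R})$). Your treatment of boundedness is essentially the same computation as the cited Theorem~8.8 — differentiate $G=|\overline{\varphi}|^p$ (or $|t|^{p-1}t$ in Brezis), observe $G'\in L^1(\mathbb{R})$ by H\"{o}lder with $p'(p-1)=p$, and integrate — except that you stop at the sharper multiplicative bound $\|\varphi\|_{L^\infty}^p\le p\,\|\varphi\|_{L^p}^{p-1}\|\varphi'\|_{L^p}$ instead of applying Young's inequality to reach $\|\varphi\|_{L^\infty}\le C\|\varphi\|_{W^{1,p}}$. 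Where you genuinely diverge is the limit at infinity: Brezis deduces it by approximating $\varphi$ uniformly by compactly supported functions, whereas you get it directly from $G(w)=-\int_w^{\infty}G'(s)\,\ds$ and the vanishing of tails of the $L^1$ function $G'$, after first extracting sequences $y_n\to-\infty$, $w_n\to+\infty$ along which $G\to 0$ from the integrability of $|\overline{\varphi}|^p$. This route is self-contained (no density theorem needed), stays entirely within the absolutely-continuous-representative framework the paper already sets up in Theorem~\ref{mainH1AC} and Remark~\ref{idh1ac}, and produces the explicit constant; the density route is shorter if one is willing to import the approximation theorem. The only delicate points — the chain rule for $|t|^p$ at zeros of $\overline{\varphi}$ (where $C^1$-smoothness for $p>1$ is exactly what is needed) and the membership $G'\in L^1(\mathbb{R})$ — you have identified and handled correctly.
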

\begin{proof}
A proof for $\varphi$ being bounded, that is, $\|\varphi\|_{L^{\infty}(\mathbb{R})} \leq C \| \varphi \|_{W^{1,p}(\mathbb{R})}$ (with $1\leq p \leq \infty$) can be found in \cite[Theorem 8.8 on p.~212]{brezis}), using H\"{o}lder's inequality and Young's inequality.  Also, a proof for $\dd \lim_{w \to \pm \infty} \varphi(w) = 0$ (with $1 \leq p < \infty$) is presented in \cite[Corollary 8.9 on p.~214]{brezis}), using density. 
\end{proof}
%may18 1<p<\infty
%brezis bounded 1 \leq p \leq \infty
%brezis vanish 1 \leq p < \infty
%decay proof 
%Best https://math.stackexchange.com/questions/3665278/limit-of-h1-mathbbr-functions
%
%https://math.stackexchange.com/questions/1259905/about-limit-at-infty-of-f-in-l2-mathbb-r-cap-ac-mathbb-r-with-f-in-l
%
%Q https://math.stackexchange.com/questions/3410960/showing-that-a-function-from-sobolev-space-w1-2-decays-to-zero

%%%%
%Main ref for proof https://math.stackexchange.com/questions/124221/limit-at-infinity-of-a-uniformly-continuous-integrable-function
%
%other ref for f^2 https://math.stackexchange.com/questions/3665278/limit-of-h1-mathbbr-functions

%%%%
Consider $M_V(w) = \dfrac{1}{\sqrt{2\pi}} \tu{exp} \left( -\dfrac{1}{2}(w-V)^2 \right)$ defined in \eqref{equi} and $\varphi \in W_0$ specified in \eqref{w0}.  We have the following results.  
\begin{proposition}\label{muac}
Both $M_V(w)$ and $\dd u(w) = \frac{\varphi(w)}{M_V(w)}$ are in $AC_{\tu{loc}}(\mathbb{R})\,.$
\end{proposition}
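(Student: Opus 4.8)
The plan is to handle the two functions separately, in both cases reducing to the elementary facts that a $C^1$ function is absolutely continuous on every compact interval and that the product of two functions absolutely continuous on a compact interval is again absolutely continuous there.

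First I would dispatch $M_V$ and, as a byproduct, $1/M_V$. The function $w \mapsto M_V(w) = \tfrac{1}{\sqrt{2\pi}}\exp(-\tfrac12(w-V)^2)$ is a composition of a polynomial with the exponential, hence $C^\infty(\mathbb{R})$; since $M_V(w) > 0$ for every $w\,,$ the reciprocal $1/M_V(w) = \sqrt{2\pi}\exp(\tfrac12(w-V)^2)$ is $C^\infty(\mathbb{R})$ as well. On any compact interval $[\lambda,\gamma]\subset\mathbb{R}$ the derivative of such a function is continuous, hence bounded, so the function is Lipschitz on $[\lambda,\gamma]\,;$ and a Lipschitz function on a compact interval is absolutely continuous there (take $\delta = \epsilon/L$ in Definition \ref{ACdef}, with $L$ the Lipschitz constant). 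As this holds for every compact $[\lambda,\gamma]\,,$ Definition \ref{locACdef} gives $M_V \in AC_{\tu{loc}}(\mathbb{R})$ and $1/M_V \in AC_{\tu{loc}}(\mathbb{R})\,.$

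For $u = \varphi/M_V$ I would first invoke Corollary \ref{corH1AC} together with Remark \ref{idh1ac}: since $\varphi \in W_0 \subset H^1(\mathbb{R})\,,$ it is identified with its locally absolutely continuous representative, so $\varphi \in AC_{\tu{loc}}(\mathbb{R})\,.$ It then remains to check that $AC_{\tu{loc}}(\mathbb{R})$ is closed under products, which I would do on a fixed compact interval $[\lambda,\gamma]\,:$ both $\varphi$ and $1/M_V$ are continuous, hence bounded there, say by $C_1$ and $C_2$ respectively, and for pairwise disjoint subintervals $(a_k,b_k)\subseteq[\lambda,\gamma]$ the telescoping identity
\[
u(b_k) - u(a_k) = \varphi(b_k)\Big(\tfrac{1}{M_V(b_k)} - \tfrac{1}{M_V(a_k)}\Big) + \tfrac{1}{M_V(a_k)}\big(\varphi(b_k) - \varphi(a_k)\big)
\]
yields $\sum_k |u(b_k) - u(a_k)| \le C_1\sum_k \big|\tfrac{1}{M_V(b_k)} - \tfrac{1}{M_V(a_k)}\big| + C_2\sum_k |\varphi(b_k) - \varphi(a_k)|\,;$ choosing $\delta$ as the minimum of the thresholds supplied by the absolute continuity of $1/M_V$ (against $\epsilon/(2C_1)$) and of $\varphi$ (against $\epsilon/(2C_2)$) closes the estimate. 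Hence $u \in AC[\lambda,\gamma]$ for every compact $[\lambda,\gamma]\,,$ that is, $u \in AC_{\tu{loc}}(\mathbb{R})\,.$

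I do not expect a genuine obstacle. The only points that need care are correctly appealing to the identification of $\varphi$ with its $AC_{\tu{loc}}$ representative from Remark \ref{idh1ac} (so that the assertion ``$\varphi \in AC_{\tu{loc}}(\mathbb{R})$'' is literally available) and writing the product estimate with the boundedness constants $C_1\,, C_2$ attached to the correct moduli of absolute continuity; everything else is routine bookkeeping against Definitions \ref{ACdef} and \ref{locACdef}.
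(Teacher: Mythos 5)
Your proposal is correct and follows essentially the same route as the paper: establish $M_V \in AC_{\tu{loc}}(\mathbb{R})$ from its smoothness (hence Lipschitz continuity on compact intervals), identify $\varphi$ with its locally absolutely continuous representative via Corollary \ref{corH1AC} and Remark \ref{idh1ac}, and conclude by writing $u = \varphi \cdot (1/M_V)$ and using closure of $AC[\lambda,\gamma]$ under reciprocals of nonvanishing functions and under products. The only difference is cosmetic: the paper cites these closure properties (Royden, Problems 5.14(b)--(c)) and obtains the absolute continuity of $e^{k(w)}$ by composing a Lipschitz function with a bounded continuous exponent, whereas you observe directly that $1/M_V$ is itself $C^\infty$ and prove the product estimate by hand via the telescoping identity, which is a perfectly sound and self-contained substitute.
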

%in $AC_{\tu{loc}}(\mathbb{R})
\begin{proof}
For every bounded closed interval $[a,b] \subset (-\infty,\infty)\,,$ we know that $k(w) = -  (w-V)^2 / 2$ is continuous on $[a,b]\,,$ so it is bounded, say $|k(w)| \leq \alpha \,, \forall w \in [a,b]\,,$ for some constant $\alpha \geq 0\,.$ Since the function $y \to e^y$ is smooth (so $C^1$ function) on the compact interval $[-\alpha,\alpha]$ (where $y \in [-\alpha,\alpha]$), it follows that $e^y$ is Lipschitz continuous
%https://math.stackexchange.com/questions/2338799/c1-function-on-compact-set-is-lipschitz
%
%https://math.stackexchange.com/questions/89291/fx-and-hx-are-absolutely-continuous-functions-is-efx-hx-as-w
%
%locally Lipschitz = Lipschitz on compact interval
%smooth implies locally Lipschitz
on $[-\alpha, \alpha]\,,$ and thus $e^{k(w)}$ is absolutely continuous on $[a,b]\,.$  Hence, $M_V(w)$ is in $AC_{\tu{loc}}(\mathbb{R})\,.$ 

Now, note that given $\varphi$ and $g$ which are absolutely continuous functions on every bounded closed interval $[a,b]\,,$ and provided that the function $g$ is defined and nowhere zero on $[a,b]\,,$ it holds that the reciprocal of $g$ is absolutely continuous on $[a,b]$ (see \cite[Problem 5.14(c) on p.~111]{royden88}), and the product $\varphi \cdot (1/g)$ is absolutely continuous on $[a,b]$ (see \cite[Problem 5.14(b) on p.~111]{royden88}).  Using this remark, for any $\varphi \in W_0$ specified in \eqref{w0}, with $g(w) = M_V(w)$ defined by \eqref{equi}, the function  
\begin{equation}\label{upm}
u(w) = \frac{\varphi(w)}{M_V(w)}
\end{equation}
is absolutely continuous on any compact subinterval $[a,b]$ in $\mathbb{R}\,,$ and hence $u(w)$ is \textbf{locally absolutely continuous} over $\mathbb{R}\,.$  That is, $u(w) \in AC_{\tu{loc}}(\mathbb{R})$ defined in \eqref{aclocr} (see also \cite{frank22}). 
\end{proof}
%https://en.wikipedia.org/wiki/Absolute_continuity
%If the two functions are defined on a bounded closed interval, then their product is also absolutely continuous.[4] Royden 1988, Problem 5.14(b) on page 111.
%
%If an absolutely continuous function is defined on a bounded closed interval and is nowhere zero then its reciprocal is absolutely continuous.[5] Royden 1988, Problem 5.14(c) on page 111.
%thus on the whole real line. 

%%%
\section{Improper Riemann integrals and Lebesgue integrals}\label{appii}
We now show that for $\varphi \in W_0$ \eqref{w0}, the Lebesgue integral equals the improper Riemann integral:
\begin{equation}\label{ler}
\int_{\mathbb{R}} \varphi \, \dm = \int_{-\infty}^{\infty} \varphi(w) \, \dw < \infty\,.
\end{equation}

Over a compact interval, such relation between Riemann integral and Lebesgue integral  can be found in \cite[Theorem 2.28 on p.~57]{folland}. 
Over all $\mathbb{R}\,,$ the relation \eqref{ler} can be derived from \cite{apostol} in a standard manner.
However, for the completeness in our context of space $W_0$ $\eqref{w0}\,,$ the proof for \eqref{ler} is presented in detail here.  To this end, we need the following preliminaries.

\begin{definition}[Doubly infinite integral \cite{apostol}]\label{dii}
%p 277
It is said that 
\begin{equation}\label{diiform}
\int_{-\infty}^{\infty} \varphi(w) \, \dw
\end{equation}
is \textbf{convergent} if both
\begin{equation}\label{2ii}
\int_{-\infty}^{0} \varphi(w) \, \dw \text{ and } \int_{0}^{\infty} \varphi(w) \, \dw
\end{equation}
are convergent, that is, if the limits 
\begin{equation}\label{2iilim}
\lim_{s \to \infty} \int_{-s}^{0} \varphi(w) \, \dw \text{ and } \lim_{s \to \infty} \int_{0}^{s} \varphi(w) \, \dw 
\end{equation}
both exist (for all $s \geq 0$).  In such case, 
\begin{equation}\label{isum2}
\int_{-\infty}^{\infty} \varphi(w) \, \dw = \lim_{s \to \infty} \int_{-s}^{0} \varphi(w) \, \dw + \lim_{s \to \infty} \int_{0}^{s} \varphi(w) \, \dw \,.
\end{equation}
If any one of these limits does not exist, then it is said that $\dd \int_{-\infty}^{\infty} \varphi(w) \, \dw$ is divergent.
\end{definition}

We thus note that an improper integral that does not converge is called \textbf{diverges}, and may simply \textbf{diverge} in no specific direction (as divergence by oscillation, or as $-1,1,-1,1,\ldots$). 
It can also happen that an improper integral \textbf{diverges (or tends) to infinity}; and in such case, it may be assigned the value of $\infty\,.$
%https://en.wikipedia.org/wiki/Improper_integral 
%as the limit of a function (that is, function the form of a definite integral)
%%%
\begin{lemma}\label{symlim}
If $\varphi: \mathbb{R} \to \mathbb{R}$ is Riemann integrable on every compact subinterval of $\mathbb{R}$ and the integral 
\[\int_{-\infty}^{\infty} \varphi(w) \, \dw \]
exits, then the \textbf{symmetric limit} \cite{apostol} 
\[\lim_{s \to \infty} \int_{-s}^s \varphi(w) \, \dw\]
exists and 
\[\lim_{s \to \infty} \int_{-s}^s  \varphi (w)  \, \dw = \int_{-\infty}^{\infty}  \varphi (w)  \, \dw\,.\]
\end{lemma}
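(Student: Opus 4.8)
The plan is to reduce the statement to the defining property of convergence of a doubly infinite integral (Definition \ref{dii}), together with the elementary additivity of the Riemann integral over adjacent compact subintervals and the algebra of limits. First I would record that, since $\varphi$ is Riemann integrable on every compact subinterval of $\mathbb{R}$, for each real $s \geq 0$ the integral $\int_{-s}^{s}\varphi(w)\,\dw$ is well defined and, by additivity over $[-s,0]\cup[0,s]$,
\[
\int_{-s}^{s}\varphi(w)\,\dw = \int_{-s}^{0}\varphi(w)\,\dw + \int_{0}^{s}\varphi(w)\,\dw\,.
\]

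Next, I would invoke the hypothesis that $\int_{-\infty}^{\infty}\varphi(w)\,\dw$ exists: by Definition \ref{dii}, this means precisely that the two independent limits
\[
L_{1} := \lim_{s\to\infty}\int_{-s}^{0}\varphi(w)\,\dw\,, \qquad L_{2} := \lim_{s\to\infty}\int_{0}^{s}\varphi(w)\,\dw
\]
both exist in $\mathbb{R}$, and that $\int_{-\infty}^{\infty}\varphi(w)\,\dw = L_{1} + L_{2}$. Passing to the limit $s\to\infty$ in the displayed additivity identity, and using that the limit of the sum of two convergent sequences is the sum of their limits, I conclude that $\lim_{s\to\infty}\int_{-s}^{s}\varphi(w)\,\dw$ exists and equals $L_{1} + L_{2} = \int_{-\infty}^{\infty}\varphi(w)\,\dw$, which is the assertion.

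There is no genuine obstacle here; the only point that warrants care is the logical direction. The argument works because the hypothesis guarantees convergence of the two one-sided pieces \emph{separately}, so they may be recombined freely; the converse implication is false (for instance $\varphi(w) = w$ has vanishing symmetric limit while $\int_{0}^{\infty} w\,\dw$ diverges), so one must not attempt to argue from the symmetric limit alone. One may optionally remark that the same reasoning splits $\mathbb{R}$ at any fixed point $c\in\mathbb{R}$ in place of $0$, which is convenient when later comparing this improper Riemann integral with the Lebesgue integral of $\varphi\in W_{0}$ in Appendix \ref{appii}.
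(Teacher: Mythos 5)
Your proposal is correct and follows essentially the same route as the paper's proof: split $\int_{-s}^{s}$ at $0$, invoke Definition \ref{dii} to get existence of the two one-sided limits, and combine them by the algebra of limits. The extra remarks on additivity of the Riemann integral and on the failure of the converse (cf.\ Remark \ref{cpvrmk} in the paper) are accurate but not needed beyond what the paper already records.
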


\begin{proof}
Suppose $\dd \int_{-\infty}^{\infty} \varphi(w) \, \dw$ exists.  Then by Definition \ref{dii}, 
\[\lim_{s \to \infty} \int_{-s}^{0} \varphi(w) \, \dw \text{ and } \lim_{s \to \infty} \int_{0}^{s} \varphi(w) \, \dw \] both exist (for all $s \geq 0$).  Hence, 
\begin{align*}
\lim_{s \to \infty } \int_{-s}^{s} \varphi(w) \, \dw &= \lim_{s \to \infty} \left( \int_{-s}^{0} \varphi(w) \, \dw +  \int_{0}^{s} \varphi(w) \, \dw \right)\\
&= \lim_{s \to \infty}  \int_{-s}^{0} \varphi(w) \, \dw + \lim_{s \to \infty} \int_{0}^{s} \varphi(w) \, \dw = \int_{-\infty}^{\infty} \varphi(w) \, \dw\,.
\end{align*}
\end{proof}

%remark 11 /home/tinamai/GOOGLE/Papers/Hydrodynamic limit of Kuramoto-Sakaguchi Kinetic theory/References/Integrals/Lebesgue vs Riemann integrals/For proof/
\begin{remark}\label{cpvrmk}
It is important to note that when the symmetric limit $\dd \lim_{s \to \infty} \int_{-s}^s \varphi(w) \, \dw$ exists but $\dd \int_{-\infty}^{\infty} \varphi(w) \, \dw$ is divergent (for instance, choose $\varphi(w) = w$), then the symmetric limit is called the \textbf{Cauchy principal value} of $\dd \int_{-\infty}^{\infty} \varphi(w) \, \dw\,.$  For example, $\dd \int_{-\infty}^{\infty} w \, \dw$ has the Cauchy principal value 0, but the integral $\dd \int_{-\infty}^{\infty} w \, \dw$ 
 does not exist.  
\end{remark}

%%%%%%%
\begin{lemma}\label{irisl}
If a non-negative function $g:\mathbb{R} \to [0,\infty)$ is Riemann integrable on every compact subinterval of $\mathbb{R}$ and the symmetric limit
\[\lim_{s \to \infty} \int_{-s}^s  g (w)  \, \dw\]
exists, then
$\dd \int_{-\infty}^{\infty} g(w) \, \dw$ exists and
\begin{equation}\label{irisle}
\int_{-\infty}^{\infty}  g(w)  \, \dw = \lim_{s \to \infty} \int_{-s}^s  g(w)  \, \dw\,. 
\end{equation}
\end{lemma}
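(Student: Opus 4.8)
The plan is to exploit the sign condition $g \geq 0$, which forces the two one-sided partial integrals to be monotone in the truncation parameter. First I would set $A(s) := \int_{-s}^{0} g(w)\,\dw$ and $B(s) := \int_{0}^{s} g(w)\,\dw$ for $s \geq 0$; these are well-defined real numbers since $g$ is Riemann integrable on every compact subinterval of $\mathbb{R}$. Because $g \geq 0$, for $0 \leq s_1 \leq s_2$ we have $A(s_1) \leq A(s_2)$ and $B(s_1) \leq B(s_2)$, so each of $A$ and $B$ is nondecreasing on $[0,\infty)$.

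Next I would show that each of $A$ and $B$ is bounded above. Write $L := \lim_{s\to\infty}\int_{-s}^{s} g(w)\,\dw$, which exists in $\mathbb{R}$ by hypothesis. Since $\int_{-s}^{s} g(w)\,\dw = A(s) + B(s)$ is itself nondecreasing in $s$ (again by $g \geq 0$) and converges to $L$, it satisfies $A(s) + B(s) \leq L$ for all $s \geq 0$. Combining this with $A(s), B(s) \geq 0$ gives $A(s) \leq L$ and $B(s) \leq L$ for all $s \geq 0$. A nondecreasing real-valued function on $[0,\infty)$ that is bounded above converges, as $s \to \infty$, to its supremum; hence the limits $A_\infty := \lim_{s\to\infty} A(s)$ and $B_\infty := \lim_{s\to\infty} B(s)$ both exist and are finite.

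Then, by Definition \ref{dii}, the existence of these two limits means precisely that $\int_{-\infty}^{0} g(w)\,\dw$ and $\int_{0}^{\infty} g(w)\,\dw$ converge, so that $\int_{-\infty}^{\infty} g(w)\,\dw$ converges and, by \eqref{isum2}, equals $A_\infty + B_\infty$. Finally, by the algebra of limits (equivalently, by Lemma \ref{symlim} applied now that the integral is known to exist),
\[
\int_{-\infty}^{\infty} g(w)\,\dw = A_\infty + B_\infty = \lim_{s\to\infty}\bigl(A(s) + B(s)\bigr) = \lim_{s\to\infty}\int_{-s}^{s} g(w)\,\dw,
\]
which is \eqref{irisle}.

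The only point requiring any care is the boundedness step: one must invoke nonnegativity twice — once to get monotonicity of the one-sided integrals, and once to bound each of them by the finite symmetric limit. Without the sign condition, the symmetric limit can exist while the doubly infinite integral diverges (the Cauchy-principal-value phenomenon recorded in Remark \ref{cpvrmk}), so the hypothesis $g \geq 0$ is genuinely used; beyond this observation there is no substantive analytic obstacle.
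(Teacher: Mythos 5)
Your proposal is correct and follows essentially the same route as the paper: both arguments use nonnegativity to make the one-sided truncated integrals monotone, bound them by the finite symmetric limit, conclude convergence via the monotone-bounded principle (completeness of $\mathbb{R}$), and then invoke Definition \ref{dii} together with Lemma \ref{symlim} to identify the value. The only cosmetic difference is that you bound $A(s)+B(s)$ jointly by $L$ while the paper bounds each one-sided integral directly by $\int_{-s}^{s} g \leq \lim_{s\to\infty}\int_{-s}^{s} g$; these are the same estimate.
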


\begin{proof}
For all $s \geq 0\,,$ since $g$ is non-negative, it follows that $\dd h(s) = \int_0^s g(w) \, \dw$ is an increasing function on $[0,\infty)\,.$  More specifically,
\[h(s) = \int_0^s g(w) \, \dw \leq \int_{-s}^s g(w) \, \dw \leq \lim_{s \to \infty} \int_{-s}^s g(w) \, \dw < \infty\,.\]
That is, $h(s)$ is bounded above, so 
\[\lim_{s \to \infty} \int_0^s g(w) \, \dw = \lim_{s \to \infty} h(s) = \sup \{ h(s): s\in [0, \infty)\}\,, \]
by the completeness of the real line.
%every bounded above subset has a supremum (lub) https://en.wikipedia.org/wiki/Completeness_of_the_real_numbers
Similarly, for $s \in [0,\infty)\,,$ it can be proved that $\dd k(s) = \int_{-s}^0 g(w) \, \dw$ is an increasing function, which is bounded above by $\dd \lim_{s \to \infty} \int_{-s}^s g(w) \, \dw < \infty\,.$  Hence,
\[\lim_{s \to \infty} \int_{-s}^0 g(w) \, \dw = \lim_{s \to \infty} k(s) = \sup \{ k(s): s\in [0, \infty)\}\,. \]
Therefore, $\dd \int_{-\infty}^{\infty} g(w) \, \dw$ exists (by Definition \ref{dii}).  Using Lemma \ref{symlim}, we have 
\[\int_{-\infty}^{\infty}  g(w)  \, \dw = \lim_{s \to \infty} \int_{-s}^s  g(w)  \, \dw\,.\]
The proof is completed.
\end{proof}
%%%
\begin{remark}\label{isosl}
If $\varphi:\mathbb{R} \to \mathbb{R}$ is Riemann integrable on every compact subinterval of $\mathbb{R}$ and
\begin{equation}\label{airi}
\int_{-\infty}^{\infty} | \varphi (w) | \, \dw < \infty\,,
\end{equation}
then the integral
\begin{equation}\label{airi1}
\int_{-\infty}^{\infty}  \varphi (w)  \, \dw
\end{equation}
is called \textbf{absolutely convergent}, and it is also convergent (or exists); and thus by Lemma \ref{symlim}, the symmetric limit
\[\lim_{s \to \infty} \int_{-s}^s \varphi(w) \, \dw\]
exists, and 
\begin{equation}\label{irisle1}
\int_{-\infty}^{\infty}  \varphi (w)  \, \dw = \lim_{s \to \infty} \int_{-s}^s  \varphi (w)  \, \dw\,. 
\end{equation}
\end{remark}

%%%
\begin{lemma}\label{legs}
If $\varphi \in C([a,b])\,,$ then
\begin{equation}\label{addlegs}
\tilde{\varphi}(w) = \begin{cases}
 \varphi(w) &\text{ if } w\in[a,b]\\
 0 &\text{ otherwise }
\end{cases}
\end{equation}
is a Lebesgue integrable function over all $\mathbb{R}\,.$
\end{lemma}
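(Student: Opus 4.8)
The plan is to show that $\tilde{\varphi} \in L^1(\mathbb{R}, m)\,,$ which amounts to checking two things: that $\tilde{\varphi}$ is Lebesgue measurable on $\mathbb{R}\,,$ and that $\int_{\mathbb{R}} |\tilde{\varphi}| \, \dm < \infty\,.$ First I would dispose of measurability. Since $\varphi$ is continuous on $[a,b]\,,$ it is Borel measurable there; and for any open $U \subseteq \mathbb{R}\,,$ the preimage $\tilde{\varphi}^{-1}(U)$ is $\varphi^{-1}(U)$ (a Borel subset of $[a,b]\,,$ hence of $\mathbb{R}$) together with $\mathbb{R}\setminus[a,b]$ in the case $0 \in U\,,$ which is again a Borel set. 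Hence $\tilde{\varphi}$ is Borel, and therefore Lebesgue, measurable. (Equivalently, $\tilde{\varphi}$ is continuous on $\mathbb{R}\setminus\{a,b\}\,,$ hence measurable, and altering it on the null set $\{a,b\}$ affects neither measurability nor the integral.)

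Next I would bound $\tilde{\varphi}$ using compactness. By the extreme value theorem, the continuous function $|\varphi|$ attains a finite maximum $M := \max_{w \in [a,b]} |\varphi(w)| < \infty$ on the compact interval $[a,b]\,,$ so that $|\tilde{\varphi}(w)| \leq M$ for $w \in [a,b]$ and $|\tilde{\varphi}(w)| = 0$ otherwise. Monotonicity of the Lebesgue integral then gives
\[
\int_{\mathbb{R}} |\tilde{\varphi}(w)| \, \dm \leq M \, m([a,b]) = M(b-a) < \infty\,,
\]
so $\tilde{\varphi} \in L^1(\mathbb{R}, m)\,,$ i.e., $\tilde{\varphi}$ is Lebesgue integrable over all of $\mathbb{R}\,.$ Alternatively, one may route through the comparison already invoked in the paper: $\varphi \in C([a,b])$ is Riemann integrable on $[a,b]\,,$ hence Lebesgue integrable there with equal integral by \cite[Theorem 2.28 on p.~57]{folland}, and extension by zero outside $[a,b]$ preserves Lebesgue integrability over $\mathbb{R}$ since the added set contributes nothing.

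There is essentially no obstacle in this lemma; it is a routine fact about compactly supported continuous functions. The only point deserving a line of care is the measurability of $\tilde{\varphi}$ at the two endpoints $a$ and $b\,,$ where continuity of the extension may fail — and this is handled either by the Borel-preimage computation above or simply by discarding the two-point null set before integrating.
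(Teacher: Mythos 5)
Your proof is correct and complete. It differs from the paper in that the paper does not actually prove this lemma at all: its entire ``proof'' consists of pointing to Apostol's Theorem 10.18 and leaving the verification to the reader. You instead supply a self-contained argument, and both of its halves are sound: the measurability of $\tilde{\varphi}$ via Borel preimages (correctly noting that $\tilde{\varphi}^{-1}(U)$ picks up the open set $\mathbb{R}\setminus[a,b]$ exactly when $0\in U$), and the integrability bound $\int_{\mathbb{R}}|\tilde{\varphi}|\, \dm \leq M(b-a)<\infty$ from the extreme value theorem. Your alternative route through the Riemann--Lebesgue comparison on the compact interval followed by extension by zero is essentially the content of the Apostol citation the paper relies on, so you have in effect covered both the paper's intended argument and a more elementary direct one. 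The one point you rightly flag --- possible discontinuity of $\tilde{\varphi}$ at the endpoints $a$ and $b$ --- is handled correctly either way; nothing is missing.
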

%Lem 8 https://math.mit.edu/~rbm/18-102-Sp16/Chapter2.pdf
%https://math.mit.edu/~rbm/18-102-Sp16/Sources.html
%
%extension by zero https://en.wikipedia.org/wiki/Sobolev_space
\begin{proof}
This result is obtained from \cite[Theorem 10.18 on p.~263]{apostol}, and the proof is left to the reader.    
\end{proof}
%%add legs Lem 2.2 proof https://math.mit.edu/~rbm/18-102-S18/Chapter2.pdf
%%%%%%%
\begin{lemma}\label{irl}
A continuous function $\varphi \in C(\mathbb{R})$ is Lebesgue integrable if and only if the ``improper Riemann integral''
\begin{equation}\label{iraf}
\lim_{s \to \infty} \int_{-s}^s | \varphi(w)| \, \dw < \infty  \,.  
\end{equation}
Moreover, if this result holds, then both $\varphi$ and $| \varphi |$ are improper Riemann-integrable over all $\mathbb{R} = (-\infty,\infty)\,;$ %that is,
%\begin{equation}\label{iric}
%\int_{-\infty}^{\infty} \varphi(w) \, \dw < \infty \text{ and } \int_{-\infty}^{\infty} |\varphi(w)| \, \dw < \infty\,,  
%\end{equation}
and the Lebesgue integral of $\varphi$ is equal to the improper Riemann integral of $\varphi$ on $\mathbb{R}$ as \eqref{ler}:
\begin{equation}\label{ler2}
\int_{\mathbb{R}} \varphi \, \dm = \int_{-\infty}^{\infty} \varphi(w) \, \dw < \infty\,.
\end{equation}
\end{lemma}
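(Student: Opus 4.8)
The plan is to prove the equivalence by first handling the nonnegative function $|\varphi|$ and then bootstrapping to $\varphi$ itself, with the two main tools being the Monotone and Dominated Convergence Theorems and the classical fact (\cite[Theorem 2.28]{folland}) that on a compact interval the Riemann integral of a continuous function coincides with its Lebesgue integral.

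For the forward direction, suppose $\varphi \in C(\mathbb{R})$ is Lebesgue integrable, so $\int_{\mathbb{R}} |\varphi| \, \dm < \infty$. For each $s \geq 0$ the function $|\varphi|$ is continuous, hence Riemann integrable, on $[-s,s]$, so $\int_{-s}^{s} |\varphi(w)| \, \dw = \int_{[-s,s]} |\varphi| \, \dm$. Since $|\varphi| \chi_{[-n,n]} \uparrow |\varphi|$ pointwise, the Monotone Convergence Theorem yields $\lim_{n \to \infty} \int_{-n}^{n} |\varphi(w)| \, \dw = \int_{\mathbb{R}} |\varphi| \, \dm < \infty$; as $s \mapsto \int_{-s}^{s} |\varphi(w)| \, \dw$ is nondecreasing, the limit along integers equals $\lim_{s \to \infty}$, which is \eqref{iraf}. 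For the reverse direction, suppose $\lim_{s \to \infty} \int_{-s}^{s} |\varphi(w)| \, \dw < \infty$. By Lemma \ref{legs} together with \cite[Theorem 2.28]{folland}, each $|\varphi| \chi_{[-n,n]}$ is Lebesgue integrable with $\int_{\mathbb{R}} |\varphi| \chi_{[-n,n]} \, \dm = \int_{-n}^{n} |\varphi(w)| \, \dw$; these increase pointwise to $|\varphi|$, so Monotone Convergence gives $\int_{\mathbb{R}} |\varphi| \, \dm = \lim_{n \to \infty} \int_{-n}^{n} |\varphi(w)| \, \dw < \infty$. Since $\varphi$ is continuous and hence Borel measurable, integrability of $|\varphi|$ shows $\varphi \in L^1(\mathbb{R})$.

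For the "moreover" part, assume the equivalent conditions hold. The nonnegative continuous function $|\varphi|$ is Riemann integrable on every compact subinterval and its symmetric limit exists, so Lemma \ref{irisl} shows the doubly infinite integral $\int_{-\infty}^{\infty} |\varphi(w)| \, \dw$ exists and equals that symmetric limit; in particular it is finite. Then by Remark \ref{isosl}, $\int_{-\infty}^{\infty} \varphi(w) \, \dw$ is absolutely convergent, hence convergent, and by Lemma \ref{symlim} it equals $\lim_{s \to \infty} \int_{-s}^{s} \varphi(w) \, \dw$; so $\varphi$ and $|\varphi|$ are both improper Riemann integrable over $\mathbb{R}$. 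Finally, to identify the value with the Lebesgue integral, I would apply the Dominated Convergence Theorem to $\varphi \chi_{[-s,s]} \to \varphi$ with the integrable dominating function $|\varphi|$, obtaining $\int_{\mathbb{R}} \varphi \, \dm = \lim_{s \to \infty} \int_{[-s,s]} \varphi \, \dm = \lim_{s \to \infty} \int_{-s}^{s} \varphi(w) \, \dw = \int_{-\infty}^{\infty} \varphi(w) \, \dw$, where the middle equality again uses \cite[Theorem 2.28]{folland}.

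The only genuinely delicate point is the bookkeeping between the symmetric ("improper Riemann") limit and the doubly infinite integral of Definition \ref{dii}: the identification is automatic for the nonnegative $|\varphi|$ via Lemma \ref{irisl}, but for the sign-changing $\varphi$ it can only be made after integrability of $|\varphi|$ has been secured (so that comparison forces each one-sided improper integral to converge), which is why the order of the steps above is essential rather than cosmetic.
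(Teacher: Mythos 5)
Your proposal is correct and follows essentially the same route as the paper's own proof: truncation by $\chi_{[-s,s]}$, the Monotone and Dominated Convergence Theorems, Lemma \ref{legs} for integrability of the truncations, and Remark \ref{isosl} to reconcile the symmetric limit with the doubly infinite integral (your use of MCT instead of DCT in the forward direction, and your more explicit bookkeeping in the ``moreover'' part, are only cosmetic differences).
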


\begin{proof}
The proof on the domain $[a,\infty)$ can be found in \cite[Theorems 10.31 and 10.33]{apostol}.  Now, we prove this Lemma over all $\mathbb{R} = (-\infty, \infty)\,.$ Provided that $\varphi$ is continuous (so $|\varphi|$ is continuous as well) in all $\mathbb{R}\,.$ Thus, $\varphi$ and $|\varphi|$ are both Riemann integrable on every compact (closed and bounded) subinterval of $\mathbb{R}\,.$  
%https://en.wikipedia.org/wiki/Compact_space  
%(see Appendix \ref{hardytype}), 
%A subset of Euclidean space in particular is called compact iff it is closed and bounded (Heine–Borel theorem). https://math.stackexchange.com/questions/660138/continuity-of-absolute-value-of-a-function
%
%Compact subsets of Hausdorff spaces (as R) are closed; and closed subsets of compact spaces are compact. https://math.stackexchange.com/questions/2122441/hausdorff-and-compactness#:~:text=Compact%20subsets%20of%20Hausdorff%20spaces,of%20compact%20spaces%20are%20compact.&text=There%20are%20compact%20but%20non,Hausdorff%20but%20not%20compact%20spaces.
%
%https://math.stackexchange.com/questions/542162/prove-the-absolute-value-function-of-a-continuous-function-is-continuous

For the first direction of the Lemma, if $\varphi \in L^1(\mathbb{R})\,,$ then $|\varphi| \in L^1(\mathbb{R})\,,$ and $\chi_{[-s,s]} |\varphi| \in L^1(\mathbb{R})$ converges to $|\varphi| \in L^1(\mathbb{R})\,.$  Thus, by Dominated Convergence Theorem, \eqref{iraf} must be satisfied.

Conversely, if \eqref{iraf} holds, then consider the sequence of continuous functions $\chi_{[-s,s]}|\varphi|\,,$ which is known to be in $L^1(\mathbb{R})$ by Lemma \ref{legs}.  This sequence of non-negative functions is monotonically increasing to $|\varphi|\,.$ Thus, by the Monotone Convergence theorem, it follows that $|\varphi| \in L^1(\mathbb{R})\,.$ 
Next, consider the sequence of continuous functions $\chi_{[-s,s]}\varphi\,,$ which we know to be in $L^1(\mathbb{R})$ by Lemma \ref{legs}. 
 Since this sequence $\chi_{[-s,s]}\varphi$ is bounded by $|\varphi|$ and converges pointwise to $\varphi\,,$ it follows from the Dominated Convergence Theorem that $\varphi \in L^1(\mathbb{R})$ and 
 \[\int_{\mathbb{R}} \varphi \, \dm = \lim_{s \to \infty} \int_{\mathbb{R}} \chi_{[-s,s]}\varphi \, \dm = \lim_{s \to \infty} \int_{-s}^s \varphi(w) \, \dw = \int_{-\infty}^{\infty}  \varphi (w)  \, \dw\,,\]
where the last equality is satisfied thanks to the hypothesis \eqref{iraf} and Remark \ref{isosl}.  Therefore, \eqref{ler2} (as \eqref{ler}) holds, and this completes the proof.
\end{proof}
%as the improper integral's Cauchy principal value

%%Lem 2.9 proof https://math.mit.edu/~rbm/18-102-S18/Chapter2.pdf
%
%BEST proof https://math.stackexchange.com/questions/4271046/does-absolute-improper-riemann-integrability-imply-that-riemann-integral-equals

%%%%%%%
\section{Hardy's inequality with weights over a half-open interval}\label{hiw1}

Our Hardy-type inequality \eqref{hardyineq} over $(-\infty, \infty)$ is developed from the Hardy's inequality in \cite[Theorem~1.14]{hardyopic} or \cite[Theorem 4]{Muck1972} by Muckenhoupt over a half-open interval $[d,\infty)\,,$ which is restated in Theorem \ref{ttam} below.

We first recall weight function terminology (see \cite[Definition 1.4 on p.~8]{hardyopic}, for instance).

\begin{definition}\label{weightdef}
A \textit{weight function} $\mu$ on $(a,b)$ with $-\infty \leq a < b \leq \infty$ is defined to be a function that is measurable, positive, and finite almost everywhere (a.e.) on $(a,b)\,.$ 
\end{definition}

Also, recall the definition $AC_{\tu{L}}((a,b))$ from \eqref{aclr}.

\bigskip

The following statement is similar to a well-known result by Tomaselli, Talenti, and Artola. 
\begin{theorem}\label{ttam}
If $\mu$ and $\nu$ are continuous weight functions on $(d,\infty)$ with rapid decay at infinity, and $u \in AC_{\tu{L}}([d, \infty))$ with $u(d)=0$ for some $d \in \mathbb{R}\,,$ then the Hardy's inequality
\begin{equation}\label{hardyineq1mn}
\int_{d}^{\infty}(u(w))^2 \, \mu(w) \, \dw  \leq \lambda_{\tu{L}} \int_{d}^{\infty} (u'(w))^2 \, \nu(w) \, \dw 
\end{equation}
holds for some finite constant $\lambda_{\tu{L}} >0$ if and only if
\begin{equation}\label{BL1mn}
b_{\tu{L}} = \sup_{0 < r < \infty} \left( \int_r^{\infty} \mu(w) \, \dw \right) \left( \int_d^r \frac{1}{\nu(w)} \, \dw \right) < \infty\,.
\end{equation}
(Within \eqref{BL1mn}, it is known that $1/\nu(w) = +\infty$ if $\nu(w) = 0\,;$ and for the product inside the supremum, $0\cdot \infty$ implies $0\,.$)  Moreover, in the Hardy's inequality \eqref{hardyineq1mn}, the best (least) constant $\lambda_{\tu{L}}$ satisfies 
\begin{equation}\label{bestBL}
b_{\tu{L}} \leq \lambda_{\tu{L}} \leq 4b_{\tu{L}}\,.
\end{equation}
\end{theorem}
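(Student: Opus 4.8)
The plan is to prove the equivalence together with the two-sided estimate by the classical Muckenhoupt argument, specialized to the half-line $[d,\infty)\,;$ indeed, \eqref{hardyineq1mn}--\eqref{bestBL} is exactly the case $a=d\,,\ b=\infty\,,\ p=q=2$ of \cite[Theorem~1.14]{hardyopic} (equivalently \cite[Theorem~4]{Muck1972}), so one option is simply to quote that reference; below I outline the self-contained argument.

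\emph{Necessity and the bound $b_{\tu{L}}\le\lambda_{\tu{L}}\,.$} I would fix $r\in(d,\infty)$ and test \eqref{hardyineq1mn} against $u_r(w):=\int_d^{\min\{w,r\}}\nu(t)^{-1}\,\dt\,,$ which lies in $AC_{\tu{L}}([d,\infty))$ with $u_r(d)=0$ and $u_r'(w)=\nu(w)^{-1}\mathbf{1}_{(d,r)}(w)\,.$ Then $\int_d^\infty (u_r')^2\nu\,\dw=\int_d^r\nu^{-1}\,\dw\,,$ while the monotonicity bound $u_r\ge u_r(r)=\int_d^r\nu^{-1}$ on $[r,\infty)$ gives $\int_d^\infty u_r^2\,\mu\,\dw\ge\big(\int_d^r\nu^{-1}\big)^2\int_r^\infty\mu\,\dw\,.$ Substituting into \eqref{hardyineq1mn} and cancelling one (positive) factor $\int_d^r\nu^{-1}$ yields $\big(\int_r^\infty\mu\big)\big(\int_d^r\nu^{-1}\big)\le\lambda_{\tu{L}}\,,$ and taking the supremum over $r$ produces $b_{\tu{L}}\le\lambda_{\tu{L}}\,;$ in particular finiteness of $b_{\tu{L}}$ is necessary. (A routine truncation, replacing $\nu^{-1}$ by $\min\{\nu^{-1},n\}$ and letting $n\to\infty\,,$ handles the case where $\int_d^r\nu^{-1}=\infty$.)

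\emph{Sufficiency and the bound $\lambda_{\tu{L}}\le 4b_{\tu{L}}\,.$} Assume $b_{\tu{L}}<\infty\,.$ First replace $u'$ by $f:=|u'|\ge 0$: since $|u|\le\int_d^{\,\cdot}|u'|\,,$ this enlarges the left side of \eqref{hardyineq1mn} and leaves the right side unchanged. Writing $R(w):=\int_d^w\nu^{-1}\,\dt\,,$ I would apply Cauchy--Schwarz to $u(w)=\int_d^w f\,\dt$ with the splitting $f=\big(f\,\nu^{1/2}R^{1/4}\big)\big(\nu^{-1/2}R^{-1/4}\big)$ and use $\nu^{-1}R^{-1/2}=(2R^{1/2})'$ to obtain the pointwise estimate
\[
u(w)^2\;\le\;2\,R(w)^{1/2}\int_d^w f(t)^2\,\nu(t)\,R(t)^{1/2}\,\dt\,.
\]
Multiplying by $\mu(w)\,,$ integrating over $(d,\infty)\,,$ and exchanging the order of integration by Tonelli reduces the claim to the pointwise inequality $R(t)^{1/2}\int_t^\infty\mu(w)R(w)^{1/2}\,\dw\le 2b_{\tu{L}}$ for a.e.\ $t\,.$ I would prove this by integrating by parts with $\Phi(w):=\int_w^\infty\mu\,,$ using $\Phi(w)\le b_{\tu{L}}/R(w)$ (the defining bound for $b_{\tu{L}}$) and $\Phi(w)R(w)^{1/2}\to 0$ as $w\to\infty$ (from the rapid decay of $\mu\,,$ or from $R(w)\to\infty$): this gives $\int_t^\infty\mu R^{1/2}\,\dw=\Phi(t)R(t)^{1/2}+\tfrac12\int_t^\infty\Phi(w)R(w)^{-1/2}\nu(w)^{-1}\,\dw\le 2b_{\tu{L}}\,R(t)^{-1/2}\,.$ Combining the estimates yields $\int_d^\infty u^2\mu\,\dw\le 4b_{\tu{L}}\int_d^\infty (u')^2\nu\,\dw\,,$ i.e.\ $\lambda_{\tu{L}}\le 4b_{\tu{L}}\,;$ together with $b_{\tu{L}}\le\lambda_{\tu{L}}$ this is \eqref{bestBL}. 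Finally I would recall that, by Proposition~\ref{muac}, the special weights $\mu=\nu=M_V$ relevant to \eqref{hardyineq} are continuous weight functions with the required decay, so the theorem applies in the form used in Theorem~\ref{hardylem1}.

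\emph{Main obstacle.} The Cauchy--Schwarz and Tonelli manipulations and the final integration by parts are routine; the delicate part is the limiting and degenerate bookkeeping: justifying that the boundary term at $+\infty$ in the integration by parts vanishes (distinguishing $R(\infty)<\infty$ from $R(\infty)=\infty$ and invoking the ``rapid decay at infinity'' hypothesis on the weights), and correctly interpreting all the integrals on sets where a weight degenerates, under the stated conventions $1/\nu(w)=+\infty$ when $\nu(w)=0$ and $0\cdot\infty=0$ — in particular the subcase $R(t_0)=\infty$ for some finite $t_0>d\,,$ which forces $\mu\equiv 0$ a.e.\ on $[t_0,\infty)$ and collapses the problem to a bounded subinterval, where the classical weighted Hardy inequality on a compact interval (Appendix~\ref{hiw1}, Theorem~\ref{ttam}) applies directly.
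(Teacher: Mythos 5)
Your sufficiency argument is essentially the paper's own proof of the ``if'' part: the splitting $f=\bigl(f\,\nu^{1/2}R^{1/4}\bigr)\bigl(\nu^{-1/2}R^{-1/4}\bigr)$ is exactly the paper's choice $\kappa(r)=R(r)^{1/2}$ with $\chi=2\kappa\,,$ and your integration by parts on $\Phi R^{1/2}$ is an equivalent route to the paper's bound $\int_r^{\infty}\mu\,\kappa\,\dw\le 2\sqrt{b_{\tu{L}}}\sqrt{h(r)}\,,$ giving the same constant $4b_{\tu{L}}\,.$ The paper proves only this direction and defers the converse to Muckenhoupt, so your necessity argument via the test functions $u_r(w)=\int_d^{\min\{w,r\}}\nu^{-1}\,\dt$ — which correctly yields $b_{\tu{L}}\le\lambda_{\tu{L}}$ — is a sound addition rather than a deviation.
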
  
 % d<r<\infty \cite{frank22}, not d \leq r
%
%\lambda constant, web Muckenhoupt's proof of the Hardy inequality in dimension 1 https://canizo.org/page/26
We note that the supremum in \eqref{BL1mn} is taken over $d < r < \infty$ (see \cite[(1.18)]{hardyopic} or \cite{frank22}, for instance).  This Theorem \ref{ttam} provides an explicit method for determining whether the Hardy's inequality \eqref{hardyineq1mn} holds, and yields the best (smallest)
constant up to a factor of 4.  Theorem \ref{ttam} is commonly referred to as Muckenhoupt's theorem, and Muckenhoupt provided a very clear proof of it in \cite[Theorem 4]{Muck1972}, then further explanations can be found in \cite{canizo_muck}. In that article \cite{Muck1972}, one can also find references to less well-known papers by Tomaselli, Talenti, and Artola.  There are many generalizations of the Hardy's inequality \eqref{hardyineq1mn}, which can be found in the nice book by Opic and Kufner \cite{hardyopic}, where different versions of the ``basic'' Hardy's inequality \cite{Hardy1920} can be grouped together, and each inequality is referred to as a \textit{Hardy-type inequality} \cite{hardyopic}.  
%https://en.wikipedia.org/wiki/Hardy%27s_inequality#cite_note-Hardy1920-1
%https://zenodo.org/records/2411518
%https://link.springer.com/article/10.1007/BF01199965#citeas

%https://canizo.org/page/26
%Muck1972 http://matwbn.icm.edu.pl/ksiazki/sm/sm44/sm4414.pdf

Following the proof provided by Muckenhoupt \cite{Muck1972}, which is explained in \cite{canizo_muck}, we now prove the ``if'' part of Theorem \ref{ttam} (as we will need it to show our Theorem \ref{hardylem1} later).

\begin{proof}[\textbf{Proof of the ``if'' part of Theorem \ref{ttam}}.]
Choose a general function $\kappa(r)\,,$ to be specified then, where $d < r <\infty\,.$  For all $w \in [d, \infty)\,,$ by the hypothesis $u(d) = 0$ and H\"{o}lder's inequality, we obtain 
\begin{align*}
(u(w))^2 &= \left( \int_d^w u'(r) \, \dr \right)^2 = \left( \int_d^w \frac{1}{\sqrt{\nu(r)\,  \kappa(r)}} \cdot \sqrt{(u'(r))^2 \, \nu(r) \, \kappa(r)}  \, \dr \right)^2\\ 
& \leq \left(\int_d^w \left( \frac{1}{\sqrt{\nu(r)\,  \kappa(r)}} \right)^2 \, \dr \right) \left( \int_d^w \left( \sqrt{ (u'(r))^2 \, \nu(r) \, \kappa(r) } \right)^2 \, \dr \right) \\
&= \chi(w) \int_d^w (u'(r))^2 \, \nu(r) \, \kappa(r) \, \dr\,,
\end{align*}
where
\begin{equation*}
\chi(w): = \int_d^w  \frac{1}{\nu(r)\,  \kappa(r)} \, \dr \quad \forall w\in [d,\infty)\,. \end{equation*}
Therefore, using \cite[(3.2) on p.~22]{hardyopic} (which is Fubini's theorem with a change of the limits of integration), we get
\begin{align*}
\int_d^{\infty} (u(w))^2 \, \mu(w) \, \dw & \leq \int_d^{\infty} \mu(w) \, \chi(w) 
 \left( \int_d^w  (u'(r))^2 \, \nu(r) \, \kappa(r) \, \dr \right) \, \dw  \\
 &= \int_d^{\infty}  (u'(r))^2 \, \nu(r) \, \kappa(r) \left(\int_r^{\infty} \mu(w) \, \chi(w) \, \dw \right) \, \dr\,.
\end{align*}

For the purpose of proving inequality \eqref{hardyineq1mn}, it is sufficient to find a function $\kappa(r)\,,$ with $d<r<\infty\,,$ such that
\begin{align*}
\kappa(r) \, \int_r^{\infty} \mu(w) \, \chi(w) \, \dw \leq 4b_{\tu{L}} \quad \forall d<r<\infty\,.  
\end{align*}
A possible choice of $\kappa(r)$ is 
\begin{equation*}
(\kappa(r))^2 := \int_d^r \frac{1}{\nu(y)} \, \dy \quad \forall r\in [d,\infty)\,, 
\end{equation*}
with $\kappa(d) =0\,.$ These lead to
\begin{align*}
\chi(w) = \int_d^w  \frac{1}{\nu(r)\,  \kappa(r)} \, \dr = \int_d^w \frac{((\kappa(r))^2)'}{\kappa(r)} \, \dr = 2 \int_d^w \kappa'(r) \, \dr =2 \, \kappa(w) \,.  
\end{align*}
Thus, we need to prove that 
\begin{equation}\label{leq4bl}
2 \kappa(r) \int_r^{\infty} \mu(w) \, \kappa(w) \, \dw \leq 4b_{\tu{L}} \quad \forall d<r<\infty\,,
\end{equation}

We denote 
\[\dd h(w):= \int_w^{\infty} \mu(y) \, \dy \quad \forall d<r<\infty \,.\] Since \eqref{BL1mn} means 
\[h(w) \, (\kappa(w))^2 \leq b_{\tu{L}}\,,\] it follows that
\begin{align*}
\int_r^{\infty} \mu(w) \, \kappa(w) \, \dw &= \int_r^{\infty} \sqrt{h(w)} \, \kappa(w) \, \frac{\mu(w)}{\sqrt{h(w)}} \, \dw   \\
&\leq \sqrt{b_{\tu{L}}} \int_r^{\infty} \frac{\mu(w)}{\sqrt{h(w)}} \, \dw = -2 \sqrt{b_{\tu{L}}} \int_r^{\infty} \left(\sqrt{h(w)}\right)' \, \dw =2 \sqrt{b_{\tu{L}}} \sqrt{h(r)}
\end{align*}
Applying this to the left-hand side of \eqref{leq4bl} and together with $\sqrt{h(r)} \, \kappa(r) \leq \sqrt{b_{\tu{L}}}$ \eqref{BL1mn} leads to \eqref{leq4bl} as desired.
\end{proof}

%%%%%%%%
\vspace{20pt}

\noindent \textbf{Acknowledgements.}
The author is deeply grateful to Prof.~Eitan Tadmor for his dedicated time and significant help with this paper (starting from the 2018 High Performance Scientific Computing Conference, VIASM, Hanoi, Vietnam to the QKP2023, IMS, NUS, Singapore) %with going through the paper, following the conservation laws, 
with invaluable advices (especially on the conservation laws), and more importantly, inspiring mentoring, that words cannot express.  The author wishes to thank Prof.~Seung-Yeal Ha very much for kindly suggesting this problem (at the Vietnam-Korea Workshop 2017, Duy Tan University, Da Nang, Vietnam).  She is indebted to the sponsoring by Prof.~Yalchin Efendiev and the Institute for Scientific Computation, at Texas A\&M University, College Station, US (TAMU).
%veneration
Her special gratitude to Prof.~Edriss Titi, who gave her meticulous guides on related aspects of Analysis and PDEs, and enthusiastically brought her attention to the Hardy's inequality, which is then helpfully applied to the paper (at TAMU, 2023).  Also, members of TAMU Math Department and the Department Head Prof.~Peter Howard 
offer a warm hospitality for the author during completing this paper. 
%Her warm gratitude to Prof.~Laure Saint-Raymond for first drawing her attention to hydrodynamic limits and hearty encouragement.  
To friends, she genuinely thanks Prof.~Ricardo Alonso (at the 2018 Hyperbolic Problems Conference, Pennsylvania State University, US) for encouraging conversations and thoughtful comments, Prof.~Minh-Binh Tran for motivating meetings and open-minded discussions. %Prof.~Benoit Perthame for happy conversations with her about synchronization and neural assemblies, and Prof.~Russel Caflisch for his caring about her progress. 
She really appreciates the seminars held by the analysis groups at Danang University of Education and Duy Tan University, Vietnam.  Her many thanks to the Ki-Net Conferences, US, 2015--2016, and the IMS program Multiscale Analysis and Methods for Quantum and Kinetic Problems (QKP2023), at the National University of Singapore (NUS), with generous support and good colleagues.  %Laure Saint-Raymond for hydrodynamic limit, Beno\^{i}t Perthame for discussing neural assemblies and Kuramoto models.
%Da Nang University of Education and Analysis group for seminars and good questions about the noise effects. 
 %Ngoc-Ha Hoang for discussing the scaling in a seminar at Duy Tan University. 
%The author wishes to warmly thank Seung-Yeal Ha (Seoul National University) for his helpful 
%suggestions about the topics and discussions about the subject at 
%the meeting of the Vietnam - Korea workshop on selected topics 
%in Mathematics, Danang, Vietnam, February 20-24, 2017.  She hence also would like to express 
%her genuine gratitude to the workshop and her institutions (TAMU at College Station and UED at Danang) with all kind support.  
%The second author also would like to warmly 
%thank Yalchin Efendiev as the director of the ISC for his constant encouragement and 
%support to her work.

%The author expresses her gratitude to all the valuable support from Duy Tan University, who is going to celebrate its 30th anniversary of establishment (Nov. 11, 1994 -- Nov. 11, 2024) towards ``Integral, Sustainable and Stable Development''.
%%%%%%%%%%%%%%
\bibliographystyle{plain} 
\bibliography{ksh}
\end{document}